\newcommand{\commentout}[1]{}
\newcommand {\e}  {\varepsilon}
\newcommand {\dt}   {\Delta t}
\newcommand {\dx}   {\Delta x}
\newcommand {\bv} { {\mathbf{v}} }
\newcommand {\bx} { {\mathbf{x}} }
\newcommand {\p}   {\partial}
\newcommand{\beq}{\begin{equation}}
\newcommand{\eeq}{\end{equation}}
\newcommand{\bea} {\begin{array}{rl}}
\newcommand{\eea} {\end{array}}
\newcommand{\bepa}{\left\{ \begin{array}{l}}
\newcommand{\eepa} {\end{array}\right.}
\newcommand*\diff{\mathop{}\!\mathrm{d}}
\newcommand{\brho}{\boldsymbol{\rho}}
\newcommand{\br}{\boldsymbol{r}}
\newtheorem{theorem}{Theorem}
\newtheorem{remark}[theorem]{Remark}
\newtheorem{proposition}[theorem]{Proposition}
\begin{document}

\title{Asymptotic preserving schemes for nonlinear kinetic equations leading to volume-exclusion chemotaxis in the diffusive limit}

\author{Gissell Estrada-Rodriguez\thanks{Mathematical Institute, University of Oxford, Oxford OX2 6GG, U.K. email: {estradarodri@maths.ox.ac.uk}}
\and Diane Peurichard\thanks{Sorbonne Universit\'e, Universit\'e de Paris, CNRS, Inria, Laboratoire Jacques-Louis Lions, F-75005 Paris, France. email: {}
}\and Xinran Ruan\thanks{School of Mathematical Sciences, Capital Normal University,  
100048 Beijing, China. email: {xinran.ruan@cnu.edu.cn}}}

\date{}

\maketitle

\begin{abstract}
In this work we first prove, by formal arguments, that the diffusion limit of nonlinear kinetic equations, where both the \textit{transport term and the turning operator are density-dependent}, leads to volume-exclusion chemotactic equations. We generalise an asymptotic preserving scheme for such nonlinear kinetic equations based on a micro-macro decomposition. By properly discretizing the nonlinear term implicitly-explicitly in an upwind manner, the scheme produces accurate approximations also in the case of strong chemosensitivity. We show, via detailed calculations, that the scheme presents the following properties: asymptotic preserving, positivity preserving and energy dissipation, which are essential for practical applications. We extend this scheme to two dimensional kinetic models and we  validate its efficiency by means of 1D and 2D numerical experiments of pattern formation in biological systems.
\end{abstract}

\section{Introduction}
Chemotaxis is the mechanism by which cells and organisms adapt their movement in response to a chemical stimulus present in their environment. This phenomenon has been observed in many biological systems \cite{Alt1980,Kaupp2012,Jin2008,Roussos2011}. 

The mathematical study of chemotaxis started from the seminal contributions of Patlak \cite{patlak1953random} and Keller and Segel \cite{keller1970initiation,keller1971model}, where the authors introduced the celebrated Patlak-Keller-Segel (PKS) model. This model was originally proposed for pattern formation in bacterial populations through an advection-diffusion system of two coupled parabolic equations describing the evolution of the cell density and the chemoattractant (see \cite{Hillen2009} for a review about Keller-Segel type models). 
The PKS original model has been modified by various authors, with the aim of improving its consistency with biological systems. One example is the volume-exclusion chemotactic system introduced by Hillen and Painter \cite{painter2002volume} to take into account the finite size of the cells and volume limitations. In such models the chemotactic sensitivity (i.e. the term leading to cell aggregation) depends on both the chemical concentration in the medium and the local cell density, thus, the population density directly modulates its own sensitivity response. The coupled system reads, in it's parabolic-elliptic form as
\begin{equation}
\begin{aligned}
    \partial_t\rho-\nabla\cdot\left(D_\rho(q(\rho)-\rho q'(\rho))\nabla\rho-\chi_0q(\rho)\rho\nabla c\right)&=h(\rho,c)\ , \ \qquad t\geq 0\ , \mathbf{x}\in \Omega\subset\mathbb{R}^n\ ,\\
    D_c\Delta c+g(\rho,c)& = 0\ .
    \label{eq: starting model equation}
\end{aligned}
\end{equation}
Here, $\rho(t,\mathbf{x})$ is the cell density, $c(t,\mathbf{x})$ is the chemoattractant concentration and  $q(\rho)$ is a function that describes the packing capacity of the cell aggregates.  The diffusion coefficients for the cells and chemoattractant are $D_\rho$ and $D_c$, respectively, and $\chi_0$ is the chemotactic sensitivity. The proliferation (or death) of the cells is described by $h(\rho,c)$ and the production and consumption of the chemoattractant is given by $g(\rho,c)$. Note that the classical PKS model is recovered by taking $q(\rho) = 1$ in \eqref{eq: starting model equation}.  It has been shown \cite{wrzosek2010volume,calvez2006volume,zheng2015boundedness} that such volume-exclusion effect prevents blow-ups in finite time compared to the model without density effects (with $q(\rho) = 1$). The volume-exclusion chemotactic equations have been widely studied in
the literature, from a modelling \cite{painter2002volume,wang2010chemotaxis}, analytic \cite{wrzosek2010volume,han2017pattern,ma2012stationary,dolak2005keller} and numerical perspectives \cite{ibrahim2014efficacy}, and
they have proven to be successful at describing aggregation phenomena \cite{almeida2020treatment,bubba2019chemotaxis}. 

A natural question that arises is whether the volume-exclusion Keller-Segel equation \eqref{eq: starting model equation} proposed in  \cite{painter2002volume} could be obtained as the diffusion limit of a kinetic `velocity jump' model \cite{othmer1988models}, giving insights into how the individual mechanisms at the cell level can lead to volume-exclusion effects at the population level. In this paper, inspired by the approach followed in \cite{othmer2002diffusion,othmer2000diffusion}, we show, by formal arguments, that the system \eqref{eq: starting model equation} can be obtained in the diffusion limit of a nonlinear kinetic equation, provided that both the transport term and the turning operator are density-dependent { (Theorem \ref{theorem1} in Section \ref{sec:macroscopic derivation})}. The corresponding kinetic `velocity jump' model we propose reads 
\begin{equation}
    \partial_t f+\mathbf{v}\cdot \nabla(F[\rho](t,\mathbf{x},\mathbf{v}) f)= \lambda q(\rho)(-f+\rho T(\mathbf{v},\rho,\nabla c))\ ,\label{eq: inital model without scaling}
\end{equation} where $f(t,\mathbf{x},\mathbf{v}) \geq 0$ is the phase space cell density,  $\mathbf{x}\in \Omega \subset \mathbb{R}^n$ denotes the position, $\mathbf{v} \in V \subset \mathbb{R}^n$ is the velocity where $V$ denotes the unit sphere  $V=\{\mathbf{x}\in\mathbb{R}^n:\ |\mathbf{x}|=1 \}$, and  $t \in \mathbb{R}^+$ the time.
Here, $\lambda$ is the constant turning rate, with $1/\lambda$ giving a measure of the mean run length between velocity jumps, $T(\mathbf{v},\rho,\nabla c)$ gives the probability of a velocity jump to velocity $\mathbf{v}$, which depends on the chemical concentration $c$ and the term $F[\rho](t,\mathbf{x},\mathbf{v})$ describes the anisotropic transport due to the density limited motion.

We then numerically investigate whether the relevant macroscopic volume-exclusion equation corresponds to the underlying physical system described by the kinetic equation { in the diffusion limit}. These methods are widely known as asymptotic preserving schemes (AP) \cite{filbet2010class,jin1999efficient} since they mimic the asymptotic behaviour of the kinetic equation when the scaling parameter approaches to zero and the mesh size and time steps are fixed. In this paper, we use a micro-macro decomposition of the unknown in the sense of  \cite{LemouMieussens} as detailed in Section \ref{sec:micro-macro}. {The finite difference discretization is explained in Section \ref{sec: implicit-explicit scheme}, where a proper implicit-explicit discretization of the nonlinear terms including the chemosensitivity term is applied to improve the efficiency and stability of our numerical scheme.} This decomposition of the solution of the kinetic equation is analogous to a Chapman-Enskog expansion in the case of the classical Boltzmann equation. It uses the properties of the ``collisional operator'', which in our formulation describes the run and tumble movement of the individual. For the kinetic counterpart of the classical PKS equations we refer to \cite{carrillo2013asymptotic}, where the authors used an odd-even splitting at the kinetic level and studied the behaviour of solutions (blow up). Other related works can be found in \cite{emako2016well,wang2019asymptotic,jin2011class}. 

\paragraph{The volume-exclusion PKS for modelling tumor growth} {In real-world applications,} mathematical models provide useful tools towards identifying links between phenomena observed at the macroscopic level and the underlying microscopic properties. Chemotactic models have been extensively used to describe glioblastoma (GBM) aggregates  \cite{Vital2011} and in particular, the volume-exclusion system \eqref{eq: starting model equation} was used in \cite{almeida2020treatment} to explain the mechanical changes at the cell level in these systems due to the presence of a chemical treatment. In this work, the cell's elasticity is modelled through the term $q(\rho)$ which incorporates the cell-cell interactions \cite{wang2007classical}. This function $q(\rho)$ can be explicitly written as $q(\rho)=1-(\rho/\bar{\rho})^\gamma$ where $\gamma$ is a parameter that depends on the concentration of the treatment and $\bar{\rho}$ is the maximum cell density in each aggregate. For $\gamma=1$ the cells are considered as solid particles while {$\gamma>1$  corresponds to semi-elastic particles} that can squeeze into empty spaces.

A drawback of this approach is that it is formulated directly at the continuous (PDE) level, therefore the interactions comprised in such models are mostly based on phenomenological considerations at the population level. However, neglecting the microstructural features causes these models to fail to predict the micromechanical behaviors, which makes the validation of these approaches {difficult} due to the lack of physical mechanisms to interpret the population macroscopic behavior. Establishing the link between macroscopic models and their microscopic counterparts, the central task of kinetic theory, can benefit macroscopic models gain in predictive character and become invaluable aids for experimental data analysis. {This paper aims to take a step in this direction, by providing an interpretation of the volume-filling chemotactic system as the diffusion limit of a kinetic 'velocity jump' model which gives a more direct interpretation of the PDE operators in terms of more fundamental characteristics of the motion.} {We then validate this limit using an AP numerical scheme.}

\paragraph{Outline of the paper} This paper is organized as follows. In Section \ref{sec:model} we {introduce the new version of the kinetic `velocity jump' model given by \eqref{eq: inital model without scaling}} and consider its diffusion limit using a Hilbert expansion method as in \cite{othmer2002diffusion} under appropriate scaling assumptions for the turning operator. We show that the limiting system has a dissipating free energy and derive an energy estimate used to validate the numerical method. {In Section \ref{sec:micro-macro}, we present the micro-macro decomposition of a slightly modified version of the kinetic model taking into account a proliferation term. Section \ref{sec: implicit-explicit scheme} is devoted to the design of an AP scheme and the study of its properties (positivity preserving, AP etc). Finally, we present the numerical experiments and draw conclusions in Section \ref{numerical results}}.

\section{Volume-exclusion kinetic equation: Macroscopic limit}
\label{sec:model}
In this section we analyse the diffusion limit of the following transport equation
\begin{equation*}
    \partial_t f+\mathbf{v}\cdot \nabla(F[\rho](t,\mathbf{x},\mathbf{v}) f)= \lambda q(\rho)(-f+\rho T(\mathbf{v},\rho,\nabla c))\ .
\end{equation*}
The turning kernel is supposed to be independent on the previous velocity of the jumping particle, and only dependent on the new velocity $\mathbf{v}$, the chemical concentration $c(t,\mathbf{x})$ and the density of particles $\rho(t,\mathbf{x})=\int_V f(t,\textbf{x},\mathbf{v})\diff \mathbf{v}$ near position $\mathbf{x}$.
\noindent The function $q(\rho)$ is the probability {for a cell to find} space at its neighbouring locations, and we assume that only a finite number of cells, $\bar{\rho}$, can be accommodated at any site. We will therefore consider functions $q(\rho)$ such that
$$
q(\bar{\rho}) = 0 \quad \text{and} \quad q(\rho) \geq 0 \quad \text{for all } \; 0\leq \rho \leq \bar{\rho}\ .
$$

We {suppose that the turning kernel $T$ integrates to 1 in the velocity variable},
$$
\int_V T(\textbf{v},\rho,\nabla c)\diff \mathbf{v}=1\ .
$$
In the volume-exclusion approach, following the lines of \cite{Hillen2009}, we assume that the probability of making a jump depends upon the availability of space into which it can move. To this aim, we suppose that cells can only make a turn in directions where space is available, and we choose the turning operator $T$ to be
\begin{equation*}
    T(\mathbf{v},\rho,\nabla c)=\tilde{c}(t,\mathbf{x})\psi(\mathbf{v},\nabla c) q(\rho(t,\mathbf{x}+ \mathbf{v}))\ ,
\end{equation*}
\noindent where $\tilde{c}(t,\mathbf{x})$ is a normalisation factor given by
\[
\tilde{c}(t,\mathbf{x})=\frac{1}{\displaystyle\int_{V} \psi(\mathbf{v},\nabla c) q(\rho(t,\mathbf{x}+ \mathbf{v}))\diff \mathbf{v} }\ .
\]
\noindent Note that under these assumptions, particles will only make a turn (i) if they are not already trapped in a high density region (where they stop) and (ii) only in directions where the density of cells is not already too large. 

In order to take into account density limited motion, we will also consider that cells are only transported to non-overcrowded regions, and choose for the transport term
\begin{equation*}
F[\rho](t,\mathbf{x},\mathbf{v}) = q\big(\rho(t,\mathbf{x}+\mathbf{v})\big)\ .
\end{equation*}
\subsection{Diffusion scaling} Following the lines of \cite{Hillen2009}, we aim to obtain a macroscopic limit by choosing space and time scales on which {there are many velocity jumps in one order of time, but small net displacements on this time scale}. To this aim, we define the dimensionless velocity, space and time variables as
$$
\mathbf{u} = \frac{\mathbf{v}}{s}\ , \quad \mathbf{\xi} = \frac{\mathbf{x}}{L}\ , \quad \tau = \frac{t}{\sigma}\ ,
$$
\noindent where $s$ is the characteristic speed, $L$ the characteristic length scale and $\sigma$ yet to be determined. Equation \eqref{eq: inital model without scaling} now writes,
\begin{equation}
    \frac{1}{\tau} \partial_\tau \tilde{f}+ \frac{s}{L} \mathbf{u} \cdot \nabla_\xi (F[\tilde{\rho}](\tau,\mathbf{\xi},\mathbf{u}) \tilde{f})= \lambda q(\tilde{\rho})(-\tilde{f}+\tilde{\rho} T(\mathbf{u},\tilde{\rho},\nabla c))\ ,\label{eq: scaled model}
\end{equation}
\noindent where $\tilde{f}(\mathbf{\xi},\mathbf{u},\tau) = f(\sigma \tau,L \mathbf{\xi} ,s \mathbf{u},t)$, $\tilde{\rho}(\tau,\mathbf{\xi}) = \rho(\sigma t,L \mathbf{\xi})$ and therefore 
$$
F[\tilde{\rho}](\tau,\mathbf{\xi},\mathbf{u}) = q(\tilde{\rho}(\tau, \mathbf{\xi} + \frac{s}{L} \mathbf{u})\ .
$$
We estimate the diffusion coefficient as the product of the characteristic speed times the distance traveled between velocity jumps, giving $D \approx \mathcal{O}(\frac{s^2}{\lambda})$, and we deduce the characteristic diffusion time on the length scale $L$ by $\tau_{diff} \approx \frac{L^2 \lambda}{s^2}$. The characteristic drift time is defined by $\tau_{drift} = \frac{L}{s}$ and we assume that the space scale is such that $\tau_{run} = \frac{1}{\lambda} \ll \tau_{drift} \ll \tau_{diff}$. We therefore introduce a small parameter $\varepsilon \ll 1$ and ensure that $\tau_{run} = \mathcal{O}(1)$, $\tau_{drift} = \mathcal{O}(\frac{1}{\varepsilon})$ and $\tau_{diff} = \mathcal{O}(\frac{1}{\varepsilon^2})$ by choosing the time and space scales to be $L \approx \mathcal{O}(\frac{s}{\varepsilon})$ and $\sigma = \tau_{diff}$. Without loss of generality, we set $\lambda = 1$ and now equation \eqref{eq: scaled model} becomes (dropping the tildes {and, with a slight abuse of notations, going back to $\mathbf{x},\mathbf{v}$ and $t$ for the dimensionless quantities $\xi, \mathbf{u}$ and $\tau$),
\begin{equation}
    \varepsilon^2 \partial_t f^\varepsilon+\varepsilon\mathbf{v}\cdot \nabla(F_\varepsilon[\rho](t,\mathbf{x},\mathbf{v}) f^\varepsilon) = q(\rho)(-f^\varepsilon+\rho T_\varepsilon(\mathbf{v},\rho,\nabla c))\ , \label{eq: inital model with scaling}
\end{equation}
\noindent where
\begin{equation}\label{TepsFeps}
    T_\varepsilon(\mathbf{v},\rho,\nabla c)=\frac{\psi_\varepsilon(\mathbf{v},\nabla c) q(\rho(t,\mathbf{x}+\varepsilon \mathbf{v}))}{\displaystyle\int_{V}\psi_\varepsilon(\mathbf{v},\nabla c) q(\rho(t,\mathbf{x}+\varepsilon \mathbf{v})) \diff \mathbf{v}}\ ,\qquad
    F_\varepsilon[\rho](t,\mathbf{x},\mathbf{v}) = q(\rho(t,\mathbf{x}+\varepsilon \mathbf{v}))\ .
\end{equation}
} 

{We will consider that the dependency of the turning operator on the chemical gradient $\nabla c$ happens as a perturbation of magnitude $\varepsilon$ in the following way
\begin{equation}\label{expansion_psi}
    \psi_\varepsilon(\mathbf{v},\nabla c)=\psi_0(\mathbf{v})+\varepsilon\psi_1(\mathbf{v},\nabla c)\ .
\end{equation}
Moreover, we will consider that $\langle\psi_\varepsilon \rangle = \int_V \psi_\varepsilon (\mathbf{v},\nabla c) \diff \mathbf{v} = 1$, {where $\psi_\varepsilon(\mathbf{v},\nabla c)$ is non-negative and decreasing in $\nabla c$, which means that cells are less likely to tumble when the chemical gradient increases.} In order to recover the volume-exclusion Keller-Segel equation, we will assume that $\psi_0$ is radially symmetric and that the perturbation $\psi_1(\mathbf{v},\nabla c)$ depends linearly on the chemical gradient $\nabla c$. These assumptions lead to the following hypotheses: 
\paragraph{Hypothesis 1}
\begin{equation}\tag{H1}
    \int_V\psi_0(\mathbf{v})\diff \mathbf{v}=1\ , \quad \int_V\psi_1(\mathbf{v},\nabla c)\diff \mathbf{v}=0\ ,\label{hypo1}
\end{equation}
\paragraph{Hypothesis 2}
\begin{equation}\tag{H2}
\int_V \mathbf{v}\psi_0(\mathbf{v})\diff \mathbf{v}=0\ , \quad \psi_1(\mathbf{v},\nabla c) = \phi(\mathbf{v}) \cdot \nabla c\ . \label{hypo2}
\end{equation}
}

\subsection{Macroscopic model}\label{sec:macroscopic derivation} 

In this section we prove the following theorem:
\begin{theorem}{}\label{theorem1}(formal)
The limit $\varepsilon \rightarrow 0$ of $f^\varepsilon$ solving \eqref{eq: inital model with scaling}-\eqref{expansion_psi} together with Hypotheses \ref{hypo1} and \ref{hypo2} is $f^0 = \rho(t,\mathbf{x})\psi_0(\mathbf{v})$, where $\rho$ solves
\begin{equation}
    \partial_t\rho-\nabla\cdot\left(D_0\big(q(\rho)-\rho q'(\rho) \big)\nabla \rho-\beta\rho q(\rho)\nabla c \right)=0\ ,\label{eq: macroscopic equation1}
\end{equation}
with the diffusion coefficient $D_0$ and the chemotactic sensitivity parameter $\beta$ given by
\begin{equation}D_0=\langle(\mathbf{v}\otimes \mathbf{v})\psi_0(\mathbf{v})\rangle\qquad \textnormal{and}\qquad \beta=\langle\mathbf{v}\otimes\phi(\mathbf{v})\rangle\ . \label{eq: macroscopic coefficients}
\end{equation}
\end{theorem}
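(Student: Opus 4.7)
The plan is to implement a Hilbert-type asymptotic expansion $f^\varepsilon = f^0 + \varepsilon f^1 + \varepsilon^2 f^2 + \ldots$ compatible with the constraint $\rho = \int_V f^\varepsilon \diff \mathbf{v}$, so that $\int_V f^0 \diff \mathbf{v} = \rho$ and $\int_V f^k \diff \mathbf{v} = 0$ for $k \geq 1$, and then match powers of $\varepsilon$ in \eqref{eq: inital model with scaling}. Before matching I would Taylor-expand the nonlocal quantities in \eqref{TepsFeps}, writing $q(\rho(t,\mathbf{x}+\varepsilon\mathbf{v})) = q(\rho) + \varepsilon q'(\rho)\,\mathbf{v}\cdot\nabla\rho + O(\varepsilon^2)$, and combine this with the ansatz \eqref{expansion_psi}. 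A crucial preliminary observation, using \eqref{hypo1} and \eqref{hypo2}, is that the denominator of $T_\varepsilon$ equals $q(\rho) + O(\varepsilon^2)$: the two potential $O(\varepsilon)$ contributions vanish, one by $\int_V \psi_1 \diff\mathbf{v} = 0$ and the other by $\int_V \mathbf{v}\,\psi_0 \diff\mathbf{v} = 0$. This gives
\[
\rho T_\varepsilon = \rho\psi_0 + \varepsilon\Bigl[\rho\psi_1 + \rho\psi_0\,\frac{q'(\rho)}{q(\rho)}\,\mathbf{v}\cdot\nabla\rho\Bigr] + O(\varepsilon^2).
\]

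At order $\varepsilon^0$ the turning operator dominates, forcing $q(\rho)(\rho\psi_0 - f^0) = 0$ and hence $f^0 = \rho(t,\mathbf{x})\,\psi_0(\mathbf{v})$, which is consistent with the constraint since $\int_V \psi_0 \diff\mathbf{v} = 1$ by \eqref{hypo1}. At order $\varepsilon^1$, I substitute $f^0 = \rho\psi_0$ on the transport side and use the expansion above on the turning side. The contributions proportional to $\rho\psi_0 q'(\rho)\,\mathbf{v}\cdot\nabla\rho$ cancel between the two sides (this is the volume-exclusion consistency), and what remains yields
\[
f^1 = \rho\,\phi(\mathbf{v})\cdot\nabla c - \psi_0(\mathbf{v})\,\mathbf{v}\cdot\nabla\rho.
\]

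To close at the macroscopic level I would integrate \eqref{eq: inital model with scaling} against $\diff\mathbf{v}$. Since $\int_V T_\varepsilon \diff\mathbf{v} = 1$, the right-hand side vanishes, producing the conservation law $\varepsilon^2 \partial_t \rho + \varepsilon\,\nabla\cdot \int_V \mathbf{v}\,F_\varepsilon[\rho]\,f^\varepsilon \diff\mathbf{v} = 0$. I then expand the flux to order $\varepsilon$: the leading piece $q(\rho)\rho\int_V \mathbf{v}\psi_0 \diff\mathbf{v}$ vanishes by \eqref{hypo2}, and the $O(\varepsilon)$ correction splits into three pieces, namely $q'(\rho)\rho\, D_0 \nabla\rho$ coming from the first-order term of $F_\varepsilon$ acting on $f^0$, together with $-q(\rho) D_0 \nabla\rho$ and $q(\rho)\rho\,\beta\nabla c$ arising from the two pieces of $f^1$ (using the definitions \eqref{eq: macroscopic coefficients}). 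Collecting these contributions and dividing by $\varepsilon$ yields \eqref{eq: macroscopic equation1} in the limit $\varepsilon\to 0$.

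The principal difficulty is not conceptual but rather careful bookkeeping: the volume-exclusion factor $q'(\rho)\,\mathbf{v}\cdot\nabla\rho$ appears both in the Taylor expansion of $F_\varepsilon$ (transport side) and in the numerator of $T_\varepsilon$ (collision side), and the correct limit requires these two occurrences to combine appropriately through $f^1$. The non-trivial cancellation at the level of the denominator of $T_\varepsilon$, guaranteed by the symmetry/normalization assumptions \eqref{hypo1} and \eqref{hypo2}, is what makes $\int_V \psi_\varepsilon\, q(\rho(\mathbf{x}+\varepsilon\mathbf{v}))\diff\mathbf{v} = q(\rho) + O(\varepsilon^2)$ rather than merely $q(\rho) + O(\varepsilon)$, and is what allows the classical volume-filling Keller--Segel drift--diffusion structure to emerge cleanly from the kinetic description.
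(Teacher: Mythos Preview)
Your proposal is correct and follows essentially the same Hilbert-expansion strategy as the paper: Taylor-expand $F_\varepsilon$ and $T_\varepsilon$, match orders, identify $f^0=\rho\psi_0$ and $f^1$, and close by taking the zeroth velocity moment. The only cosmetic differences are that you solve for $f^1$ explicitly (spotting the cancellation of the $\rho q'(\rho)\psi_0(\mathbf{v}\cdot\nabla\rho)$ contributions at order $\varepsilon$) and then integrate the full kinetic equation, whereas the paper keeps the $\varepsilon^1$ relation unresolved, integrates the $\varepsilon^2$ level of the hierarchy, and substitutes the first moment $q(\rho)\langle\mathbf{v}f^1\rangle$ computed from the $\varepsilon^1$ equation; these two bookkeeping choices are equivalent. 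One minor slip: at the end you should divide by $\varepsilon^2$ (not $\varepsilon$), since the conservation law reads $\varepsilon^2\partial_t\rho+\varepsilon\nabla\cdot J=0$ with $J=O(\varepsilon)$.
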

\begin{proof}
We first expand the transport quantity $F_\varepsilon$ and the turning operator $T_\varepsilon$ given by \eqref{TepsFeps}. For $\varepsilon \ll 1$, we have
\begin{equation}
  F_\varepsilon[\rho] =  q(\rho(t,\mathbf{x}+\varepsilon \mathbf{v}))=q(\rho(t,\mathbf{x}))+\varepsilon q'(\rho(t,\mathbf{x})) \mathbf{v}\cdot \nabla\rho(t,\mathbf{x})+\mathcal{O}(\varepsilon^2)\ ,\label{eq: F scaled}
\end{equation}
where $q'(\rho)=\frac{\diff q}{\diff\rho}$. Introducing this expansion in the expression for $T_\varepsilon$, we write
\begin{equation*}
    \tilde{c}(t,\mathbf{x})=\frac{1}{q(\rho) \langle\psi_\varepsilon\rangle}\Bigl(1-\varepsilon\frac{q'(\rho) }{q(\rho) } \nabla\rho \cdot \frac{\langle \mathbf{v}\psi_\varepsilon\rangle}{\langle\psi_\varepsilon\rangle} \Bigr)\ ,
\end{equation*}
\noindent where $\langle\psi_\varepsilon\rangle =\int_V\psi_\varepsilon(\mathbf{v},\nabla c)\diff \mathbf{v}$ and $\langle \mathbf{v}\psi_\varepsilon\rangle = \int_V \mathbf{v} \psi_\varepsilon(\mathbf{v},\nabla c)\diff \mathbf{v}$ denote the first and second moments of $\psi_\varepsilon$. Finally, we obtain 
{\begin{equation*}
  T_\varepsilon (\mathbf{v},\rho,\nabla c)=\frac{\psi_\varepsilon(\mathbf{v},\nabla c)}{\langle\psi_\varepsilon\rangle} + \varepsilon\frac{q'(\rho)}{q(\rho)}\frac{\psi_\varepsilon(\mathbf{v},\nabla c)}{\langle\psi_\varepsilon\rangle} \nabla\rho \cdot \left(\mathbf{v} - \frac{\langle \mathbf{v}\psi_\varepsilon\rangle}{\langle\psi_\varepsilon\rangle}\right) +\mathcal{O}(\varepsilon^2)\ .
\end{equation*}}
Note that the error terms contained in $\mathcal{O}(\varepsilon^2)$ integrate to 0 in the velocity variable. Using \eqref{expansion_psi}, the turning kernel $T_\varepsilon$ writes
\begin{align}
    T_\varepsilon(\mathbf{v},\rho,\nabla c)=&\psi_0(\mathbf{v})+\varepsilon\Bigl(  \psi_1(\mathbf{v},\nabla c)+\frac{q'(\rho)}{q(\rho)}(\mathbf{v}\cdot\nabla\rho)\psi_0(\mathbf{v})\Bigr) + {\mathcal{O}(\varepsilon^2)} \ ,\label{eq: turning operator scaled}
\end{align}
{where the $\mathcal{O}(\varepsilon^2)$-term {is such that} $\langle\mathcal{O}(\varepsilon^2)\rangle=0$. We also note that using Hypotheses \ref{hypo1} and \ref{hypo2} we have $\langle T_\varepsilon\rangle=1$ which describes the conservation of individuals during the velocity reorientation.}
We now consider a second order regular expansion of $f^\e$ in $\varepsilon$, 
\begin{equation*}
    f^\e(t,\mathbf{x},\mathbf{v})=f^0(t,\mathbf{x},\mathbf{v})+\varepsilon f^1(t,\mathbf{x},\mathbf{v})+\varepsilon^2 f^2(t,\mathbf{x},\mathbf{v})+\mathcal{O}(\varepsilon^3)\ , 
\end{equation*}
where $\int_V f^\e(t,\mathbf{x},\mathbf{v})\diff \mathbf{v} = \int_V f^0(t,\mathbf{x},\mathbf{v})\diff \mathbf{v} = \rho(t,\mathbf{x})$, therefore $\int_V f^i(t,\mathbf{x},\mathbf{v})\diff \mathbf{v}=0$, $\forall i\geq 1$.
Introducing this ansatz in \eqref{eq: inital model with scaling}, we obtain
{\begin{align*}
    \varepsilon^2\partial_t &f^0+\varepsilon\mathbf{v}\cdot\nabla\big(q(\rho)\big(f^0+\varepsilon f^1\big)  + \varepsilon q'(\rho)(\mathbf{v}\cdot\nabla\rho)f^0  \big) \\
    =&q(\rho)\Bigl[-f^0+\rho \psi_0(\mathbf{v})+\varepsilon \Bigl((-f^1 + \rho \Bigl(  \psi_1(\mathbf{v},\nabla c)+\frac{q'(\rho)}{q(\rho)}(\mathbf{v}\cdot\nabla\rho)\psi_0(\mathbf{v})\Bigr)\Bigr){+\mathcal{O}(\varepsilon^2)}
      \Bigr]\ .
\end{align*}}
\noindent Identifying the different equations in powers of $\varepsilon$, we obtain
\begin{align}
    \varepsilon^0: & \ f^0(t,\mathbf{x},\mathbf{v})=\rho(t,\mathbf{x})\psi_0(\mathbf{v})\ ,\label{eq: epsilon0}\\
    \varepsilon^1: & \ \mathbf{v}\cdot\nabla(q(\rho)f^0)=q(\rho)\left(-f^1+\rho\left(\psi_1(\mathbf{v},\nabla c)+\frac{q'(\rho)}{q(\rho)}(\mathbf{v}\cdot\nabla\rho)\psi_0(\mathbf{v})\right)\right)\ ,\label{eq: epsilon1}\\
    \varepsilon^2: & \ \partial_tf^0+\mathbf{v}\cdot\nabla \bigg(q(\rho)f^1 + q'(\rho)(\mathbf{v}\cdot\nabla\rho)f^0 \bigg)={\mathcal{O}(\varepsilon^2)} \ \label{eq: epsilon2}.\
\end{align}
Integrating \eqref{eq: epsilon2} with respect to $\mathbf{v}\in V$ and noticing that the right hand terms integrate to zero using Hypothesis \ref{hypo1}, we get
\begin{equation}
    \partial_t\rho+\nabla\cdot \left(q(\rho)\langle\mathbf{v}f^1\rangle +\rho q'(\rho)\langle(\mathbf{v}\otimes \mathbf{v}) \psi_0\rangle \nabla\rho\right)=0\ .\label{eq: macroscopic equation}
\end{equation}
Next, after replacing $f^0$ by its expression (Eq. \eqref{eq: epsilon0}), we multiply \eqref{eq: epsilon1} by $\mathbf{v}$ and we integrate again with respect to $\mathbf{v}$ to obtain
\begin{align}
    q(\rho)\langle\mathbf{v}f^1\rangle&=- \nabla \cdot \big(\rho q(\rho) \langle(\mathbf{v}\otimes \mathbf{v})\psi_0\rangle \big)+\rho q(\rho)\langle \mathbf{v}\psi_1\rangle  +\rho q'(\rho) \langle(\mathbf{v}\otimes \mathbf{v})\psi_0\rangle  \nabla\rho\ .\label{eq: mean direction}
\end{align}
Substituting \eqref{eq: mean direction} into \eqref{eq: macroscopic equation} we get
\begin{align*}
    \partial_t\rho+\nabla\cdot\Bigl[-\langle(\mathbf{v}\otimes \mathbf{v}) \psi_0\rangle \nabla(\rho q(\rho))&+\rho q(\rho)\langle\mathbf{v} \psi_1\rangle \nonumber\\
    &+2 \rho q'(\rho)\langle(\mathbf{v}\otimes \mathbf{v})\psi_0\rangle\nabla\rho  \Bigr]=0\ . 
\end{align*}
Noting that $\nabla(q(\rho)\rho)=q'(\rho)\rho\nabla\rho+q(\rho)\nabla\rho$ and using Hypothesis \ref{hypo2} for the perturbation $\psi_1$, we finally arrive to the volume-exclusion Keller-Segel model \eqref{eq: macroscopic equation1} together with \eqref{eq: macroscopic coefficients}.
\end{proof}

This macroscopic equation describes the
volume-exclusion chemotactic motion associated with the so-called \emph{squeezing probability} $q(\rho)$. Depending on the choice of this function we can consider the cells either as solid blocks, for the case $q(\rho)=1-\frac{\rho}{\bar{\rho}}$, where $\bar{\rho}$ is the maximum cell density in each aggregate, or as semi-elastic entities for $q(\rho)=1-\left(\frac{\rho}{\bar{\rho}} \right)^\gamma$ (see \cite{wang2007classical}). {In Appendix \ref{subsec:energy}, we show that equation \eqref{eq: macroscopic equation1} admits an energy functional decreasing in time}.

\section{Micro-macro decomposition}\label{sec:micro-macro}
In this section, we will consider a more general volume-exclusion kinetic model by including a proliferation term with the appropriate scaling
\begin{equation*}
	\e^2 \p_t f + \e \bv \cdot \nabla(F_\varepsilon(\rho)f) = q(\rho) \left(-f + \rho T_\varepsilon(\bv, \rho, \nabla c)\right) + \e^2 r_0f\left(1-\rho/\rho_{\textnormal{max}}\right)_+\ ,
\end{equation*}
where  $\rho_{\textnormal{max}}$ is the carrying capacity, and the transport quantity $F_\varepsilon(\rho)$ and the turning operator $T_\varepsilon$ are defined in \eqref{eq: F scaled} and \eqref{eq: turning operator scaled}, respectively. 
{As we are interested in the limit of small $\varepsilon$, we will consider from now on a slightly modified version of the kinetic model by truncating $F_\varepsilon$ and $T_\varepsilon$ to the first order and solving the approximate equation 
\beq
\label{eq:kinetic_approx}
	\e^2 \p_t f + \e \bv \cdot \nabla(\tilde{F}_\varepsilon(\rho)f) = q(\rho) \left(-f + \rho \tilde{T}_\varepsilon(\bv, \rho, \nabla c)\right) + \e^2 r_0f\left(1-\rho/\rho_{\textnormal{max}}\right)_+\ ,
\eeq
where
\begin{align*}
    &\tilde{F}_\varepsilon = q(\rho(t,\bx)) + \varepsilon q'(\rho(t,\bx))\bv\cdot\nabla\rho(t,\bx)\ ,\\
    &\tilde{T}_\varepsilon = \psi_0(\bv) + \varepsilon\left(\psi_1(\bv,\nabla c) + \frac{q'(\rho)}{q(\rho)}(\bv\cdot\nabla\rho)\psi_0(\bv)\right)\ .
\end{align*}}
With a similar argument as in the proof of Theorem \ref{theorem1}, we can show that the approximate generalized kinetic model \eqref{eq:kinetic_approx} converges to the following macroscopic limit as $\varepsilon\to 0$
\beq
\label{eq:limit}
	\p_t \rho - \nabla \cdot \left[D_0\left(q(\rho)-\rho q'(\rho) \right)\nabla \rho-\langle\bv\psi_1\rangle\rho q(\rho)\right] = r_0 \rho\left(1 - \rho/\rho_{\rm max}\right)_+\ ,
\eeq
where $D_0 = \langle(\bv\otimes\bv)\psi_0\rangle$ and $\langle \mathbf{v}\psi_1 \rangle=\langle \mathbf{v}\phi(\mathbf{v})\rangle\nabla c=\beta\nabla c$.

To design an asymptotic preserving scheme which automatically preserves the macroscopic limit, a micro-macro formulation needs to be derived. 
We decompose the solution $f(t,\bx,\bv)$ as 
\begin{equation}\label{def:f_decompose}
    f(t,\bx,\bv) = \rho(t,\bx) \psi_0(\bv) + \varepsilon g(t,\bx,\bv)\ .
\end{equation}
We note that $\langle g\rangle = 0$ and the transport term is given by
\begin{align*}
    \bv \cdot \nabla(\tilde{F}_\varepsilon(\rho)f) & = (\bv\psi_0) \cdot \nabla(\rho q(\rho)) + \varepsilon \bv \cdot \nabla \left[q(\rho)g + (\bv\cdot\nabla\rho)\rho q'(\rho)\psi_0\right]\\
    & = (\bv\psi_0) \cdot \nabla(\rho q(\rho)) + \varepsilon \bv \cdot \nabla (q(\rho)g) + \varepsilon \nabla \cdot (\rho q'(\rho) (\bv\otimes\bv)\psi_0\nabla\rho)\ .
\end{align*}
Substituting the micro-macro decomposition of $f(t, \bx, \bv)$ given by \eqref{def:f_decompose} into the generalized volume-exclusion kinetic model \eqref{eq:kinetic_approx}, integrating over $\bv$ and noticing the fact that $\langle\bv\psi_0\rangle = 0, \, \langle T_\varepsilon \rangle=1 \, \textnormal{and}\, \langle f\rangle = \rho$, we have the equation for the macroscopic quantity $\rho(t, \bx)$
\begin{equation*}
    \p_t \rho +  \langle\bv\cdot \nabla (q(\rho) g)\rangle + \nabla \cdot (q'(\rho)\rho D_0\nabla\rho) =  r_0 \rho\left(1 - \rho/\rho_{\rm max}\right)_+\ .
\end{equation*}
To get the equation for $g$, we use the projection technique. 
For simplicity of notations, we introduce the projection operator $\Pi$ defined as 
\begin{equation*}
    \Pi f(t,\mathbf{v},\mathbf{x}) = \langle f(t,\mathbf{v},\mathbf{x})\rangle \psi_0(\mathbf{v})\ .
\end{equation*}
It is easy to check that, {for $I$ the identity operator,} 
\begin{align*}
    & (I - \Pi) f = \varepsilon(I - \Pi) g = \varepsilon g\ ,\\
    & (I-\Pi) (\bv \cdot \nabla (\tilde{F}_\varepsilon f))= (\bv\psi_0) \cdot \nabla(q(\rho)\rho) + \varepsilon(I-\Pi)\left[\bv \cdot \nabla (q(\rho)g) + \nabla \cdot (\rho q'(\rho)(\bv\otimes\bv)\psi_0\nabla\rho)\right]\ ,\\
    & (I - \Pi) \left(q(\rho)(-f + \rho \tilde{T}_\varepsilon)\right) = \varepsilon\left(\rho q(\rho)\psi_1 + \rho q'(\rho)(\bv\cdot\nabla\rho)\psi_0 - q(\rho) g\right)\ ,\\
    & (I- \Pi) \left(r_0 f \left(1 - \rho/\rho_{\rm max}\right)\right) = \varepsilon r_0 {g} \left(1 - \rho/\rho_{\rm max}\right)_+ \ .
\end{align*}
Finally taking the operator $I-\Pi$ into equation \eqref{eq:kinetic_approx}, we get
\begin{align*}
    &\p_t g + \frac{1}{\varepsilon} (I-\Pi)\left[\bv \cdot \nabla (q(\rho)g) + \nabla \cdot (\rho q'(\rho) (\bv\otimes\bv)\nabla\rho\psi_0)\right] \\
    = &\frac{1}{\varepsilon^2}\left[- q(\rho)(\bv\cdot\nabla\rho)\psi_0+\rho q(\rho)\psi_1  - q(\rho) g\right] + r_0 g \left(1 - \rho/\rho_{\rm max}\right)_+\ .
\end{align*}

As a summary, by decomposing $f$ as \eqref{def:f_decompose}, the following micro-macro formulation of the system is derived
\begin{align}\label{eq:macro_micro}
\begin{cases}
	&\p_t \rho +  \langle\bv\cdot \nabla (q(\rho) g)\rangle + { D_0 \nabla \cdot (\rho  q'(\rho)\nabla \rho)} =  r_0 \rho\left(1 - \rho/\rho_{\rm max}\right)_+\ , \\
	&\p_t g + \frac{1}{\varepsilon} (I-\Pi) K_\varepsilon = \frac{1}{\varepsilon^2}S_\varepsilon + r_0 g \left(1 - \rho/\rho_{\rm max}\right)_+\ ,\\
	&  \Delta c + \rho - c=0\ ,
\end{cases}
\end{align}
where $D_0 = \langle(\bv\otimes\bv)\psi_0\rangle$, and 
\begin{align*}
    & K_\varepsilon = \bv \cdot \nabla (q(\rho)g) + \nabla \cdot (\rho \psi_0(\bv\otimes\bv)\nabla q(\rho))\ , \\
    & S_\varepsilon = - q(\rho)(\bv\cdot\nabla\rho)\psi_0+\rho q(\rho)\psi_1  - q(\rho) g\ .
\end{align*}
With a sufficiently large domain, we expect $f$ as well as $c$ will almost reach a steady state at the boundary. 

Here we formally show that the micro-macro formulation derived {recovers} the macroscopic limit as $\varepsilon\to 0$. 
In fact, the leading order term in the equation of $g$ shows that 
\begin{equation*}
    q(\rho)g = - q(\rho)(\bv\cdot\nabla\rho)\psi_0+\rho q(\rho)\psi_1
\end{equation*}
in the limit $\varepsilon\to 0$. 
Therefore, 
\begin{equation*}
    \langle\bv\cdot \nabla (q(\rho) g)\rangle = \nabla \cdot \left[-D_0{q(\rho)\nabla\rho }+\langle\bv \psi_1\rangle\rho q(\rho)\right]\ .
\end{equation*}
Substituting it into the equation of $\rho$ in \eqref{eq:macro_micro}, we get 
\begin{equation}
\label{eq:macro_proliferation}
\begin{cases}
    & \p_t \rho - \nabla \cdot \left[D_0({q(\rho)}-{\rho q'(\rho)})\nabla\rho - \langle\bv \psi_1\rangle\rho q(\rho)\right] = r_0 \rho\left(1 - \rho/\rho_{\rm max}\right)_+\ , \\
    &  \Delta c + \rho - c=0\ .
\end{cases}
\end{equation}

{In Appendix \ref{subsec:energy} (see Remark \ref{rem 6}), we show that equation \eqref{eq:macro_proliferation} admits an energy functional that decreases in time providing we have the condition $\rho_{\max} (1 + \frac{1}{\bar{\rho}}) \leq 1$}. 

\section{An asymptotic preserving finite difference scheme}\label{sec: implicit-explicit scheme}
By discretizing the system  \eqref{eq:macro_micro} via finite difference method, we will get an asymptotic preserving scheme, which will be formally proven later in the section. 
To describe the fully discretized scheme, we consider the 1D case for simplicity, i.e. {$x, \,v \in [x_{\min}, x_{\max}]\times [v_{\min}, v_{\max}]$ with periodic boundary conditions in the $x$-direction and zero boundary conditions in the $v$-direction.}
The generalization to
the multidimensional case with tensor product grids is straightforward and is included in  Appendix \ref{sec:appendix-FD-2D}. We use a uniformly distributed mesh with 
\begin{equation*}
    t_n = n \Delta t\ , \quad x_j = j \dx\ , \quad v_k = k \Delta v\  ,
\end{equation*}
where $n\ge0$, $j=0,1,\cdots, N_x-1$, $k=0,1,\cdots, N_v$, $N_x = (x_{\max} - x_{\min}) / \Delta x$ and $N_v = (v_{\max} - v_{\min}) / \Delta v$. 
For the unknown functions $\rho(t,x)$ and $g(t,x,v)$, we compute its approximations $\rho_j^n$ and $g^n_{j+\frac{1}{2},k}$ with
\begin{equation*}
  \rho_j^n\approx\rho(t_n,x_j)\ ,\quad\textnormal{and} \quad  
  g^n_{j+\frac{1}{2},k}\approx g(t_n,x_{j+\frac{1}{2}},v_k)\ . 
\end{equation*}
Note that for the convenience of numerical computation, the approximation of the density function $\rho(t,x)$ is computed on grid points $x_j$, while the perturbation function $g(t,x,v)$ is computed on half grid points $x_{j+\frac12}$. 
Approximations of the density function $\rho(t,x)$ at half grid points can then be  efficiently computed by interpolation. 
To be more precise,  $\rho(t_n,x_{j+\frac12}) \approx \bar{\rho}_{j+\frac12}^n:=(\rho_j^n + \rho_{j+1}^n)/2$. 

For simplicity of notations, we further introduce 
the standard finite difference operators $\delta_t^+$ and $\delta_x$, which are numerical approximations of $\partial_t$ and  $\partial_x$, respectively, and defined as 
\begin{equation*}
    \delta_t^+ \rho_j^n = \frac{\rho_j^{n+1} - \rho_j^n}{\Delta t}\ , \, 
    \delta_x \rho_{j+\frac12}^n = \frac{\rho_{j+1}^n - \rho_{j}^n}{\Delta x}\ , \,
    \delta_x g_{j,k}^n = \frac{g_{j+\frac12,k}^n - g_{j-\frac12,k}^n}{\Delta x}\ . 
\end{equation*}
The composite of two operators $\delta_x$, which is denoted as $\delta_x^2$, is then defined to be 
\begin{equation*}
    \delta_x^2 \rho_j^n = \frac{\delta_x \rho_{j+\frac12}^n - \delta_x \rho_{j-\frac12}^n}{\Delta x}= \frac{\rho_{j+1}^n - 2\rho_{j}^n + \rho_{j-1}^n}{(\Delta x)^2}\ , 
\end{equation*}
which is the numerical approximation of $\partial_x^2$.
The standard finite difference operators can be applied to a multiplication of two functions. 
As an example, we define
\begin{equation*}
    \delta_x (q(\bar{\rho}_*^n) g_{*,k}^n)_j = \frac{q(\bar{\rho}_{j+\frac12}^n) g_{j+\frac12,k}^n - q(\bar{\rho}_{j-\frac12}^n) g_{j-\frac12, k}^n}{\Delta x}\ ,
\end{equation*}
where we use * to denote the positions where the sub-index $j$ is substituted. 
Another important notation to be introduced is $\langle\cdot\rangle_h$, which is defined as  
\begin{equation*}
     \langle\eta^n_{j,k}\rangle_h := \Delta v \sum_k \eta_{j,k}^n\ ,
\end{equation*}
where $\eta_{j,k}^n\approx\eta(t_n, x_j, v_k)$ for some general function $\eta(t,x,v)$. 
Obviously, $\langle\eta^n_{j,k}\rangle_h$ is the finite difference approximation of $\langle\eta(t_n,x_j,v)\rangle := \int_V \, \eta(t_n,x_j,v) \, dv$.
Then $D_0:=\langle v^2\psi_0(v)\rangle$ can be approximated by 
\begin{equation}\label{def:Dh}
    D_h := \langle v_k^2\psi_{0}(v_k)\rangle_h\ .
\end{equation}

\noindent Finally, to better approximate $q(\rho)\rho$ at $x=x_{j+\frac12}$ and $t=t_n$, 
we introduce the notation $\Phi^{n_1,n}_{j+\frac12}$, which is defined as 
\beq \label{def:Phi}
	\Phi^{n_1,n}_{j+\frac12} = 
	\begin{cases}
		\rho_j^{n_1} q(\rho_{j+1}^n)\ , \quad & \text{ if } \delta_x c_{j+\frac12}^n \ge 0\ , \\
		\rho_{j+1}^{n_1} q(\rho_j^n)\ , \quad & \text{ if } \delta_x c_{j+\frac12}^n <0\ ,
	\end{cases}
\eeq
where $n_1 = n$ or $n+1$.
As shown in \cite{de2018energy}, $\Phi^{n_1,n}_{j+\frac12}$ approximates $q(\rho)\rho$ at $t=t_n$ and  $x=x_{j+\frac12}$  
in an upwind manner and thus helps improve the stability of the numerical scheme. 

With the notations defined, the system  \eqref{eq:macro_micro} can be discretized as 
\begin{align}
\label{eq:scheme}
    \begin{cases}
    & \delta_t^+ \rho_j^n +  
    \langle v_k\delta_x(q(\bar{\rho}_*^n) g_{*,k}^{n+1})_j\rangle_h
     + D_h \delta_x(\bar{\rho}_*^n q'(\bar{\rho}_*^n)\delta_x \rho_*^{n+1})_j 
    = r_0 \rho_j^{n}\left(1 - \frac{\rho_j^n}{\rho_{\rm max}}\right)_+\ ,\\
    &\delta_t^+ g_{j+\frac12,k}^n + \frac{1}{\varepsilon}(I-\Pi_h) K_{j+\frac12,k}^n 
    = \frac{1}{\varepsilon^2}S_{j+\frac12,k}^{n, n+1} + r_0 g_{j+\frac12,k}^n\left(1 - \frac{\rho_{j+\frac12}^n}{\rho_{\rm max}}\right)_+\ ,\\
    & \delta_x^2 c_j^{n+1} + \rho_j^{n+1} - c_j^{n+1}=0\ ,
\end{cases}
\end{align}
where  $\Pi_h$ is the discrete projection operator defined as 
$\Pi_h \eta_{j,k}^n = \langle\eta_{j,k}^n\rangle_h \psi_{0}(v_k)$ for some general function $\eta(t,x,v)$, and 
\begin{align*}
    & K_{j+\frac12,k}^n = v_k^+\delta_x(q(\bar{\rho}_*^n) g_{*,k}^n)_j - v_k^-\delta_x(q(\bar{\rho}_*^n) g_{*,k}^n)_{j+1}
    + v_k^2\psi_{0}(v_k)\delta_x(\bar{\rho}_*^n q'(\bar{\rho}_*^n)\delta_x\rho_*^n)_{j+\frac12}\ , \\
    & S_{j+\frac12,k}^{n, n+1} = -v_k\psi_{0}(v_k)q(\bar{\rho}_{j+\frac12}^n)  \delta_x \rho_{j+\frac12}^{n+1} + \psi_{1}(v_k, \delta_x c_{j+\frac12}^n)
    \Phi^{n+1,n}_{j+\frac12}
    -q(\bar{\rho}_{j+\frac12}^n) g_{j+\frac12,k}^{n+1}\ ,
\end{align*}
where $v^+ = \max\{v, 0 \}$ and $v^- = \max\{-v, 0\}$.

Following the idea in \cite{LemouMieussens}, the scheme \eqref{eq:scheme} can be solved efficiently. 
Instead of solving the system \eqref{eq:scheme} directly, where all densities $\rho_{j}^{n+1}$ and perturbations $g_{j+\frac{1}{2},k}^{n+1}$ are coupled so that a large linear system needs to be inverted, 
we introduce $\tilde{g}_{j+\frac{1}{2},k}^{n+1}$, which satisfies 
\begin{align}     \label{eq:tilde_g}
    &\frac{\tilde{g}_{j+\frac12,k}^{n+1} - g_{j+\frac12,k}^n}{\Delta t} + \frac{
	1}{\e}(I - \Pi)K_{j+\frac12,k}^n
    = \frac{
	1}{\e^2} \tilde{S}_{j+\frac12,k}^{n,n+1}
	+ r_0 g_{j+\frac12,k}^n\left(1 - \frac{\rho_{j+\frac12}^n}{\rho_{\rm max}}\right)_+,
\end{align}
where 
\begin{equation*}
    \tilde{S}_{j+\frac12,k}^{n,n+1} =  
	- v_k \psi_{0}(v_k)  
	q(\bar{\rho}_{j+\frac12}^n)\delta_x\rho_{j+\frac12}^{n}
	+ \psi_{1}(v_k,\delta_x c_{j+\frac12}^n)\Phi^{n,n}_{j+\frac12}
	-q(\bar{\rho}_{j+\frac12}^n)\tilde{g}_{j+\frac12,k}^{n+1}\ .
\end{equation*}
By reformulating \eqref{eq:tilde_g}, it is easy to see that 
\begin{align*}
  \left(\frac{1}{\Delta t} + \frac{q(\bar{\rho}_{j+\frac12}^n)}{\varepsilon^2}\right)\tilde{g}_{j+\frac12,k}^{n+1} & =  
    \frac{g_{j+\frac12,k}^{n}}{\Delta t} - \frac{1}{\varepsilon}(I-\Pi) K_{j+\frac12,k}^n 
    + r_0 g_{j+\frac12,k}^n\left(1 - \frac{\rho_{j+\frac12}^n}{\rho_{\rm max}}\right)_+ \\
    & + \frac{1}{\varepsilon^2}
    \left(
    - v_k \psi_{0}(v_k)  
	q(\bar{\rho}_{j+\frac12}^n)\delta_x\rho_{j+\frac12}^{n} + \psi_{1}(v_k,\delta_x c_{j+\frac12}^n)\Phi^{n,n}_{j+\frac12}
    \right)\ ,
\end{align*}
where all the unknowns $\tilde{g}_{j+\frac{1}{2},k}^{n+1}$ can be solved explicitly from \eqref{eq:tilde_g}. 
By comparing \eqref{eq:tilde_g} and the second equation in \eqref{eq:scheme}, it can be observed that 
\begin{align}\nonumber
    &  g_{j+\frac12,k}^{n+1} 
    + \frac{1}{\varepsilon^2}\left(\frac{1}{\Delta t} + \frac{q(\bar{\rho}_{j+\frac12}^n)}{\varepsilon^2}\right)^{-1}\left(
    v_k\psi_{0}(v_k)q(\bar{\rho}_{j+\frac12}^n)\delta_x\rho_{j+\frac12}^{n+1}
    -\psi_{1}(v_k,\delta_x c_{j+\frac12}^n)\Phi^{n+1,n}_{j+\frac12} \right)\\
    = &  \tilde{g}_{j+\frac12,k}^{n+1} 
    + \frac{1}{\varepsilon^2}\left(\frac{1}{\Delta t} + \frac{q(\bar{\rho}_{j+\frac12}^n)}{\varepsilon^2}\right)^{-1}\left(v_k\psi_{0}(v_k)q(\bar{\rho}_{j+\frac12}^n)\delta_x\rho_{j+\frac12}^{n}
    -\psi_{1}(v_k,\delta_x c_{j+\frac12}^n)\Phi^{n,n}_{j+\frac12}\right).
    \label{expression:g}
\end{align}
Then by substituting \eqref{expression:g} into the first equation in  \eqref{eq:scheme}, a system which contains only the unknowns for the densities is derived. 
Specifically, we have 
\begin{align} 
    &\frac{\rho_j^{n+1}}{\Delta t} 
    - \delta_x(a_*^n \delta_x \rho_*^{n+1})_j 
    + \delta_x(b_*^n\Phi^{n+1,n}_*)_j + D_h \delta_x(\rho_*^n q'(\rho_*^n)\delta_x \rho_*^{n+1})_j =  r_j^n\ ,
    \label{eq:rho_n}
\end{align}
where the coefficients $a_{j+\frac12}^n$, $b_{j+\frac12}^n$ and residuals $r_j^n$ can be explicitly computed via 
\begin{equation}
\begin{aligned}
    & a_{j+\frac12}^n 
    = \frac{q(\bar{\rho}_{j+\frac12}^n)\Delta t}{\e^2 + q(\bar{\rho}_{j+\frac12}^n)\Delta t} D_h q(\bar{\rho}_{j+\frac12}^n)\ , \\
    & b_{j+\frac12}^n = \frac{q(\bar{\rho}_{j+\frac12}^n)\Delta t}{\e^2 + q(\bar{\rho}_{j+\frac12}^n)\Delta t} \langle v_k\psi_{1}(v_k,\delta_x c_{j+\frac12}^n)\rangle_h\ , \\
    & r_j^n = \frac{\rho_j^n}{\Delta t} - \langle v_k\delta_x (q_*^n \tilde{g}_{*,k}^{n+1})_j\rangle_h + r_0 \rho_j^n\left(1 - \frac{\rho_j^n}{\rho_{\rm max}}\right)_+ - \delta_x(a_*^n \delta_x \rho_*^n)_j
    + \delta_x(b_*^n\Phi_*^{n,n})_j\ .\label{eq: system implicit discretisation}
\end{aligned}
\end{equation}
To solve all the unknowns $\rho_j^{n+1}$ from the system \eqref{eq:rho_n}, only a tridiagonal matrix needs to be inverted. 
And then the unknowns $g_{j+\frac12,k}^{n+1}$ can be solved explicitly via \eqref{expression:g}. 
In this way, we efficiently update the system \eqref{eq:scheme} from $t=t_n$ to $t=t_{n+1}$. 

\subsection{Asymptotic preserving property}
{Here, we formally check the asymptotic preserving property of the scheme by taking $\varepsilon \to 0$ in the system \eqref{eq:scheme} and we show that the scheme for the kinetic model \eqref{eq:kinetic_approx} converges to a scheme for solving the corresponding macroscopic model \eqref{eq:limit}}. 
By checking the order of $\varepsilon$ of each term in the equation for the perturbation function $g$, i.e. the second equation in \eqref{eq:scheme}, it is easy to see that, as $\varepsilon\to 0$, we should have  $S_{j+\frac12,k}^{n,n+1} = 0$, namely
\begin{align*}
	-v_k\psi_{0}(v_k)q(\bar{\rho}_{j+\frac12}^n)  \delta_x \rho_{j+\frac12}^{n+1} + \psi_{1}(v_k, \delta_x c_{j+\frac12}^n)
    \Phi^{n+1,n}_{j+\frac12}
    -q(\bar{\rho}_{j+\frac12}^n) g_{j+\frac12,k}^{n+1}
	 = 0\ ,
\end{align*}
from where a simple reformulation gives that 
\begin{equation} \label{qg_expression}
    q(\bar{\rho}_{j+\frac12}^n) g_{j+\frac12,k}^{n+1} = -v_k\psi_{0}(v_k)q(\bar{\rho}_{j+\frac12}^n)  \delta_x \rho_{j+\frac12}^{n+1} + \psi_{1}(v_k, \delta_x c_{j+\frac12}^n)
    \Phi^{n+1,n}_{j+\frac12}\ .    
\end{equation}
Combining \eqref{def:Dh} and \eqref{qg_expression}, a direct computation shows that
\begin{equation}\label{proof:qg_int}
    \langle v_k \delta_x (q(\bar{\rho}_*^n)g_{*,k}^{n+1})_j\rangle_h = 
    \delta_x(-D_h q(
    \bar{\rho}_*^n)\delta_x\rho_*^{n+1} + \langle v_k \psi_1(v_k, \delta_x c_*^n)\rangle_h \Phi_*^{n+1,n})_j\ .
\end{equation}
Finally, by substituting \eqref{proof:qg_int} into the first equation in \eqref{eq:scheme}, we get  
\begin{equation}\label{scheme:limit}
\begin{cases}
    &\delta_t^+ \rho_j^n 
     - \delta_x\left[D_h d(\bar{\rho}_*^n)\delta_x \rho_*^{n+1} - \langle v_k \psi_1(v_k, \delta_x c_*^n)\rangle_h \Phi_*^{n+1,n}\right]_j 
    = r_0 \rho_j^{n}\left(1 - \frac{\rho_j^n}{\rho_{\rm max}}\right)_+,\\
    &\delta_x^2 c_j^{n+1} + \rho_j^{n+1} - c_j^{n+1}=0\ ,
\end{cases}
\end{equation}
where $d(\bar{\rho}_*^n) = q(\bar{\rho}_*^n)-\bar{\rho}_*^n q'(\bar{\rho}_*^n)$, 
which is indeed a finite difference scheme for solving the corresponding macroscopic model \eqref{eq:limit}. 
In this way, we verified the asymptotic preserving property of our scheme \eqref{eq:scheme}.

\subsection{Positive preserving property}
Though the scheme \eqref{eq:rho_n} might not be positive preserving for a general fixed $\dt > 0$ and $\e>0$, 
the following proposition shows that its limit \eqref{scheme:limit} as $\e\to 0^+$ is positive preserving if $q(\rho) = 1 - (\rho / \bar{\rho})^\gamma$, where $\gamma\ge1$ and $\overline{\rho}\ge\rho_{\rm{max}}$.
 The above choice of the squeezing probability function is commonly used  for semi-elastic entities as described in the Introduction. 
 A direct computation shows that, with $q(\rho)= 1 - (\rho / \bar{\rho})^\gamma$, the following is always non-negative, 
\begin{equation*}
    d(\rho)= q(\rho) - \rho q'(\rho) = 1 + (\gamma-1)\left(\frac{\rho}{\overline{\rho}}\right)^{\gamma} \ge 0\ .
\end{equation*}

\begin{proposition}
    With a general non-negative function $d(\rho):=q(\rho) - \rho q'(\rho)$, if $\rho_0^n \ge 0$ for all $j$, then, for whatever $\dt>0$, we have 
    $\rho_j^n \ge 0$ for all $j$ and $n\ge1$ in \eqref{scheme:limit}.
\end{proposition}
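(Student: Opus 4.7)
My plan is to proceed by induction on $n$, the base case being the hypothesis. For the inductive step, I rewrite \eqref{scheme:limit} as a linear system $A\rho^{n+1}=R^n$. Introducing $b_{j+\frac12}^n=\langle v_k\psi_1(v_k,\delta_x c^n_{j+\frac12})\rangle_h$ and unfolding the upwind chemotactic flux into its positive and negative parts,
\begin{equation*}
b_{j+\frac12}^n\Phi^{n+1,n}_{j+\frac12}=(b_{j+\frac12}^n)^+ q(\rho_{j+1}^n)\,\rho_j^{n+1}-(b_{j+\frac12}^n)^- q(\rho_j^n)\,\rho_{j+1}^{n+1},
\end{equation*}
produces, together with the standard implicit discretisation of the degenerate diffusion, a cyclic tridiagonal matrix $A$ with
\begin{equation*}
A_{jj}=1+\frac{\Delta t\, D_h}{(\Delta x)^2}\bigl(d(\bar\rho^n_{j+\frac12})+d(\bar\rho^n_{j-\frac12})\bigr)+\frac{\Delta t}{\Delta x}\Bigl((b_{j+\frac12}^n)^+ q(\rho_{j+1}^n)+(b_{j-\frac12}^n)^- q(\rho_{j-1}^n)\Bigr)\ge 1,
\end{equation*}
and off-diagonals $A_{j,j\pm 1}\le 0$, using the standing hypothesis $d\ge 0$ together with the volume-filling assumption $q(\rho^n)\ge 0$. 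The right-hand side $R_j^n=\rho_j^n+\Delta t\,r_0\rho_j^n(1-\rho_j^n/\rho_{\max})_+$ is non-negative by induction, so it suffices to prove $A^{-1}\ge 0$ componentwise.

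The key observation is that the spatial discretisation in \eqref{scheme:limit} is mass-conservative: summing the scheme over $j$ with periodic boundary conditions, the flux divergence $\delta_x[\cdot]_j$ telescopes to zero, and the resulting identity $\sum_j(A\rho^{n+1})_j=\sum_j\rho_j^{n+1}$, valid for arbitrary $\rho^{n+1}$, is equivalent to the matrix identity $\mathbf{1}^T A=\mathbf{1}^T$. In other words, every column of $A$ sums to $1$.

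Consequently $A^T$ is a Z-matrix with $A^T_{jj}=A_{jj}\ge 1$, and the row-sum identity $A^T_{jj}+\sum_{k\ne j}A^T_{jk}=1$ combined with $A^T_{jk}\le 0$ gives the strict diagonal dominance
\begin{equation*}
A^T_{jj}=1+\sum_{k\ne j}|A^T_{jk}|,
\end{equation*}
with uniform excess $1$ in every row. Hence $A^T$ is an $M$-matrix, so $(A^T)^{-1}\ge 0$ componentwise, so $A^{-1}\ge 0$, and finally $\rho^{n+1}=A^{-1}R^n\ge 0$, closing the induction. The main obstacle I anticipate is the careful bookkeeping behind the conservation identity $\mathbf{1}^T A=\mathbf{1}^T$: one has to check that on every face $j+\frac12$ the $b^\pm$-split of $\Phi^{n+1,n}$ produces opposite-sign contributions to the two adjacent cell equations, so that after summation over $j$ the chemotactic terms cancel column-wise. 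Once verified, the $M$-matrix argument via $A^T$ and the induction close the proof.
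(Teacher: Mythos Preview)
Your proof is correct and follows essentially the same route as the paper: induction on $n$, rewriting \eqref{scheme:limit} as a linear system with a (cyclic) tridiagonal Z-matrix whose columns sum to $1$, and invoking the M-matrix property to obtain $A^{-1}\ge 0$. The paper states the column-sum identity $m_{j,j}^n+m_{j-1,j}^n+m_{j+1,j}^n=1$ by direct computation and then argues column diagonal dominance; your derivation of the same identity via the conservative (telescoping) structure of the flux and the detour through $A^T$ is just a repackaging of that same step.
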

\begin{proof}
We prove by induction. 
Assuming that $\rho_j^n \ge0$ for all $j$, we aim to show that $\rho_j^{n+1} \ge0$ holds true for all $j$.
For simplicity of notations, we denote $\eta_{j+\frac12}^n = \langle v_k \psi_1(v_k, \delta_x c_{j+\frac12}^n)\rangle_h$.
Noticing that $\eta_{j+\frac12}^n\Phi_{j+\frac12}^{n+1,n}=(\eta_{j+\frac12}^n)_+ q(\rho_{j+1}^n)\rho_j^{n+1} - (\eta_{j+\frac12}^n)_- q(\rho_{j}^n)\rho_{j+1}^{n+1}$ via \eqref{def:Phi}, the numerical scheme \eqref{scheme:limit} can be reformulated in the matrix form
\begin{equation}\label{scheme:macro_matrix}
    M^n \brho^{n+1} = \br^n\ ,
\end{equation}
where $M^n = (m_{i,j}^n)$ is a tri-diagonal matrix and $\br^n = (r_j^n)$ is a vector with
\begin{align}
    & m_{j,j}^n = 1 + \dt 
    \left[
    D_h\frac{
    d(\bar{\rho}_{j+\frac12}^n) + d(\bar{\rho}_{j-\frac12}^n)
    }{(\dx)^2} + 
    \frac{
    (\eta_{j+\frac12}^n)_+ q(\rho_{j+1}^n) + (\eta_{j-\frac12}^n)_- q(\rho_{j-1}^n)}{\dx}
    \right] \ge 0\ , \nonumber\\
    & m_{j,j+1}^n = -\frac{\dt}{(\dx)^2} D_h d(\bar{\rho}_{j+\frac12}^n) - \frac{\dt}{\dx} (\eta_{j+\frac12}^n)_- q(\rho_j^n) \le0\ , \nonumber\\
    & m_{j,j-1}^n = -\frac{\dt}{(\dx)^2} D_h d(\bar{\rho}_{j-\frac12}^n) - \frac{\dt}{\dx} (\eta_{j-\frac12}^n)_+ q(\rho_j^n)\le0\ ,\nonumber\\
    & r_j^n = \rho_j^n + \dt \, r_0 \rho_j^{n}\left(1 - \frac{\rho_j^n}{\rho_{\rm max}}\right)_+ \ge 0\ , \label{def:m_element}
\end{align}
where $(\eta)_+ = \max\{\eta, 0\}\ge0$ and $(\eta)_-=\max\{-\eta, 0\}\ge0$.
Noticing that 
\begin{equation*}
    m_{j,j}^n + m_{j-1,j}^n + m_{j+1,j}^n = 1\ ,
\end{equation*}
the matrix $M^n$ is strictly diagonal dominant in columns with all diagonal elements positive and off-diagonal elements non-positive. 
As a result, the matrix $M^n$ is an M-matrix and thus inverse positive, i.e. all elements of its inverse $(M^n)^{-1}$ are non-negative. 
As a result, we must have $\brho^{n+1} =(M^n)^{-1}\br^n\ge0$. 
\end{proof}

\begin{remark}
From the kinetic scheme \eqref{eq:rho_n}-\eqref{eq: system implicit discretisation} we can show that, when $\varepsilon \to 0$ we recover the formulation \eqref{scheme:macro_matrix}-\eqref{def:m_element}.
Moreover, for the simplified case when $\varepsilon \to 0$, $c(t,x)=0$ and $q(\rho)$ is a constant we obtain
\begin{align*}
    \rho_{j}^{n+1}\Bigl[1+\frac{\Delta ta_{j+\frac12}^n}{(\Delta x)^2}+\frac{\Delta ta_{j-\frac12}^n}{(\Delta x)^2}\Bigr] -\rho_{j+1}^{n+1}\frac{\Delta ta_{j+\frac12}^n}{(\Delta x)^2}&
    -\rho_{j-1}^{n+1}\frac{\Delta ta_{j-\frac12}^n}{(\Delta x)^2}\\
    &=\rho_j^n+\Delta tr_0\rho_j^n\Bigl(1-\frac{\rho_j^n}{\rho_{\textnormal{max}}} \Bigr)_+\ ,
\end{align*}
which is analogous to the positivity preserving property obtained in \cite{bailo2020fully}.
\end{remark}

\begin{proposition}
    If we replace $d(\bar{\rho}_*^n)$ by  $d(\bar{\rho}_*^{n+1})$ in \eqref{scheme:limit} and consider the following modified implicit scheme 
    \begin{equation}\label{scheme:limit_implicit}
        \delta_t^+ \rho_j^n 
        - \delta_x\left[D_h d(\bar{\rho}_*^{n+1})\delta_x \rho_*^{n+1} - \langle v_k \psi_1(v_k, \delta_x c_*^n)\rangle_h \Phi_*^{n+1,n}\right]_j 
        = r_0 \rho_j^{n}\left(1 - \frac{\rho_j^n}{\rho_{\rm max}}\right)_+,
    \end{equation}
{it can be further proved that if $\rho_j^0 < \overline{\rho}$ for all $j$ and $(1+r_0\dt)\rho_{\rm max}<\bar{\rho}$, then $\rho_j^n \le \overline{\rho}$ for all $j$, and $n\ge1$ .}  
\end{proposition}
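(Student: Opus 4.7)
My plan is to argue by induction on $n$, assuming $\rho_j^n \leq \bar{\rho}$ for all $j$ (the base case $n=0$ being the hypothesis $\rho_j^0 < \bar{\rho}$) and deducing the same bound at $t_{n+1}$. The first step is to rewrite the implicit scheme \eqref{scheme:limit_implicit} in matrix form $M^{n+1}\brho^{n+1} = \br^n$, structurally identical to \eqref{scheme:macro_matrix}--\eqref{def:m_element} but with the diffusion entries now evaluated at $d(\bar{\rho}_*^{n+1})$. Because $d(\rho) = 1 + (\gamma-1)(\rho/\bar{\rho})^\gamma \geq 0$ unconditionally and $q(\rho_j^n) \geq 0$ by the induction hypothesis, the same computation as in the positivity proposition shows that $M^{n+1}$ is a tridiagonal M-matrix with column sums equal to $1$, so $(M^{n+1})^{-1}\geq 0$ entrywise.

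The second step is to introduce the slack vector $\mathbf{u}^{n+1} := \bar{\rho}\,\mathbf{1} - \brho^{n+1}$ and to reformulate the target inequality $\rho_j^{n+1} \leq \bar{\rho}$ as $\mathbf{u}^{n+1} \geq 0$. By the M-matrix property, this reduces to showing the componentwise inequality $M^{n+1}\mathbf{u}^{n+1} \geq 0$, i.e., $\bar{\rho}\,S_j \geq r_j^n$ at every node $j$, where $S_j := (M^{n+1}\mathbf{1})_j$ is the $j$-th row sum and $r_j^n = \rho_j^n + \Delta t\, r_0\rho_j^n(1-\rho_j^n/\rho_{\max})_+$. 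A straightforward case split on whether $\rho_j^n$ exceeds $\rho_{\max}$ yields $r_j^n \leq (1+r_0\Delta t)\rho_{\max} < \bar{\rho}$ when the logistic source is active (by the standard explicit-Euler bound for the logistic update) and $r_j^n = \rho_j^n \leq \bar{\rho}$ otherwise, so that the hypothesis $(1+r_0\Delta t)\rho_{\max} < \bar{\rho}$ gives $r_j^n < \bar{\rho}$ in every cell.

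I expect the main obstacle to lie in upgrading the global bound $r_j^n < \bar{\rho}$ into the pointwise inequality $\bar{\rho}\,S_j \geq r_j^n$. Unlike the column sums, the row sum $S_j = 1 + \frac{\Delta t}{\Delta x}\bigl[(\eta_{j+\frac12}^n)_+q(\rho_{j+1}^n) + (\eta_{j-\frac12}^n)_-q(\rho_{j-1}^n) - ((\eta_{j+\frac12}^n)_- + (\eta_{j-\frac12}^n)_+)q(\rho_j^n)\bigr]$ takes the form of a discrete divergence of the upwind chemotactic flux and can in principle dip below $1$. My plan to close this step is to rearrange the bracket as $(\eta_{j+\frac12}^n)_+(q(\rho_{j+1}^n)-q(\rho_j^n)) + (\eta_{j-\frac12}^n)_-(q(\rho_{j-1}^n)-q(\rho_j^n)) + q(\rho_j^n)(\eta_{j+\frac12}^n - \eta_{j-\frac12}^n)$ and to absorb its worst potential negative value into the strictly positive margin $\bar{\rho} - (1+r_0\Delta t)\rho_{\max}$ furnished by the hypothesis, possibly under a mild CFL-type restriction $\Delta t\,\sup_j|\eta_{j+\frac12}^n|/\Delta x \lesssim 1$. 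The supremum of $|\eta|$ is controlled through the elliptic equation $\delta_x^2 c^n + \rho^n - c^n = 0$ together with the induction bound $\rho_j^n\leq\bar{\rho}$, after which the M-matrix argument of the previous proposition closes the induction and yields $\rho_j^{n+1} \leq \bar{\rho}$ for all $j$.
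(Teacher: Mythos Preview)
Your global M-matrix/slack-vector strategy runs into a genuine obstruction that you correctly identify but cannot close. The row sums $S_j$ can drop strictly below $1$ at generic nodes, and the proposition furnishes \emph{no} CFL-type restriction on $\Delta t/\Delta x$; the only smallness hypothesis is $(1+r_0\Delta t)\rho_{\max}<\bar\rho$, which says nothing about the drift amplitudes $(\eta^n_{j\pm\frac12})_\pm$. Absorbing the negative part of the bracket into the margin $\bar\rho-(1+r_0\Delta t)\rho_{\max}$ therefore fails in general, and the elliptic estimate for $c$ does not rescue it (nothing prevents $|\eta|\Delta t/\Delta x$ from being large). Introducing an extra CFL condition would prove a weaker statement than the one claimed.

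The paper does not attempt to invert the M-matrix globally. Instead it argues by contradiction and localises to a single row: let $n$ be the first time at which $\max_j\rho_j^{n}\ge\bar\rho$, attained at $j=j_n$. Writing only row $j_n$ of the system and using $\rho_{j_n}^{n}\ge\rho_{j_n\pm1}^{n}$ together with $m_{j_n,j_n\pm1}\le0$ gives
\[
\text{LHS}\ \ge\ (m_{j_n,j_n}+m_{j_n,j_n-1}+m_{j_n,j_n+1})\,\rho_{j_n}^{n}.
\]
The crucial step you are missing is that \emph{at this particular index} the volume-exclusion factor satisfies $q(\rho_{j_n})=0$ (since $\rho_{j_n}\ge\bar\rho$), so the only negative contributions to the row sum --- namely $-(\eta_{j_n+\frac12})_-q(\rho_{j_n})$ and $-(\eta_{j_n-\frac12})_+q(\rho_{j_n})$ --- vanish, forcing the row sum at $j_n$ to be $\ge1$. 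Hence the left-hand side is $\ge\rho_{j_n}^{n}\ge\bar\rho$, while the right-hand side $\rho_{j_n}^{n-1}\bigl[1+r_0\Delta t\,(1-\rho_{j_n}^{n-1}/\rho_{\max})_+\bigr]$ is $<\bar\rho$ by exactly the case split you described. This yields the contradiction without any CFL restriction.

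In short: do not try to control every row sum. Only the row at the maximal node matters, and there the structural identity $q(\bar\rho)=0$ kills the problematic drift terms for free.
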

\begin{proof}
    We prove by contradiction. 
    For simplicity, we denote $j_n$ to be the index at $t=t_n$ such that $\rho_{j_n}^n = \max_j \{\rho_j^n\}$,
    and consider the smallest $n$ such that $\rho_{j_n}^n\ge \overline{\rho}$.  
    Then $n\ge1$ and $\rho_{j_n}^{n-1}< \overline{\rho}$.
    On the other hand, noticing that $q(\rho_{j_n}^n) = 0$, we have 
    \begin{equation*}
        m_{j_n,j_n}^n + m_{j_n,j_n-1}^n + m_{j_n,j_n+1}^n \ge 1 \ .
    \end{equation*}
    As a result, combining with the fact that $\rho_{j_n}^n\ge \rho_{j_n\pm1}^n$, we have  
    \begin{align}\nonumber
        m_{j_n,j_n}^n \rho_{j_n}^n + m_{j_n,j_n+1}^n \rho_{j_n+1}^n + m_{j_n,j_n-1}^n \rho_{j_n-1}^n
        &\ge \left(m_{j_n,j_n}^n  + m_{j_n,j_n+1}^n  + m_{j_n,j_n-1}^n \right)\rho_{j_n}^n \\
        & \ge \rho_{j_n}^n \ge \overline{\rho}\ . \label{proof:rho_bound_1}
    \end{align}
    On the other hand, the scheme \eqref{scheme:limit_implicit} implies that
    \begin{align*}
        m_{j_n,j_n}^n \rho_{j_n}^n + m_{j_n,j_n+1}^n \rho_{j_n+1}^n + m_{j_n,j_n-1}^n \rho_{j_n-1}^n
        & = \rho_{j_n}^{n-1}\left[ 1 + \dt \, r_0 \left(1 - \frac{\rho_{j_n}^{n-1}}{\rho_{\rm max}}\right)_+ \right]\ .
    \end{align*}
    If $\rho_{j_n}^{n-1} <\rho_{\rm max}$, we have
    \begin{equation*}
        \rho_{j_n}^{n-1}\left[ 1 + \dt \, r_0 \left(1 - \frac{\rho_{j_n}^{n-1}}{\rho_{\rm max}}\right)_+ \right] \le \rho_{\rm max}(1+r_0 \dt) < \bar{\rho}\ .
    \end{equation*}
    If $\rho_{j_n}^{n-1} \ge\rho_{\rm max}$, we have 
    \begin{equation*}
        \rho_{j_n}^{n-1}\left[ 1 + \dt \, r_0 \left(1 - \frac{\rho_{j_n}^{n-1}}{\rho_{\rm max}}\right)_+ \right] = \rho_{j_n}^{n-1} < \bar{\rho}\ ,
    \end{equation*}
    where the last inequality is due to the fact that $n$ is the smallest integer such that $\rho_{j_n}^n \ge n$. 
    We conclude that 
    \begin{equation*}
        m_{j_n,j_n}^n \rho_{j_n}^n + m_{j_n,j_n+1}^n \rho_{j_n+1}^n + m_{j_n,j_n-1}^n \rho_{j_n-1}^n < \bar{\rho}\ ,
    \end{equation*}
    which contradicts \eqref{proof:rho_bound_1}. 
    In this way, we proved that there is no $n$ such that $\rho_{j_n}^n \ge\bar{\rho}$.
    In other words, we must have $\rho_j^n\le\bar{\rho}$ for all $j$ and $n\ge0$.
\end{proof}

\section{Numerical experiments}\label{numerical results}

In this section we present several numerical examples.
In particular, we {numerically verify} the convergence of the kinetic model proposed in \eqref{eq:kinetic_approx}, which we denote as $\rho_\textnormal{kinetic}^\e$, to the volume-exclusion Keller-Segel model \eqref{eq:macro_proliferation}, denoted as $\rho_\textnormal{macro}$, as $\varepsilon\to 0$ in one and two dimensions. 
\subsection{Energy dissipation and convergence tests in 1D}
In Appendix \ref{subsec:energy} we proved that, under some assumptions \cite{calvez2006volume,carrillo2001entropy,de2018energy}, the volume-exclusion Keller-Segel model \eqref{eq:macro_proliferation} is energy dissipative, 
{where the energy is defined by the functional
\begin{equation}\label{def:energy}
    \mathcal{E}(t)=\int\Phi(\rho)\diff x-\frac{1}{2}\int\rho c\diff x\ ,    
\end{equation}
\noindent with $ \Phi $ satisfing Eq. \eqref{functionPhi}. {Via numerical integration, we can accurately approximate $\Phi(\rho)$. The energy $\mathcal{E}(t)$ in \eqref{def:energy} can then be numerically approximated via quadrature rules.} 
We will verify numerically that the energy   along the solutions $\rho_\textnormal{macro}$ of the macro model (see \ref{subsec:energy}) indeed decreases in time. 
Moreover, we study how the functional \eqref{def:energy} evolves along the numerical solutions of the kinetic model. 
For clarity, we will denote by $\mathcal{E}_\e(t)$ the value of the functional \eqref{def:energy} computed on the solution of the kinetic system $\rho_\textnormal{kinetic}^\e$ for a given $\e$ at a given time $t$. } 
The convergence of density profiles as $\varepsilon\to0$ will be numerically tested as well. 
We will compare $\rho_\textnormal{macro}(t,x)$ and $\rho_\textnormal{kinetic}^{\varepsilon}(t,x)$ at specific time points and show the convergence rate by checking $\frac{\|\rho_\textnormal{macro}-\rho_\textnormal{kinetic}^{\varepsilon}\|_2}{\| \rho_\textnormal{macro}\|_2}$ in the limit $\varepsilon\to0$, where $||\cdot||_2$ is the $L_2$ norm.

For simplicity, we consider the 1D problem  within the domain $(x, v) \in (-20,20)^2$. 
We use a uniform mesh with {$\Delta x = 0.1, \Delta v = 0.2$}. 
The periodic boundary condition is applied in the $x$-direction and the zero boundary condition is applied in the $v$-direction. 
In the simulations, we choose  $r_0=0.1$, $\rho_{\rm max} = 0.5$, $\bar{\rho} = 1$ and 
\begin{equation}\label{def:psi01}
    \psi_0(v) = \frac{1}{\sqrt{2\pi}}e^{-\frac{v^2}{2}}\ , \quad \psi_1(v, \nabla c) = \frac{v}{\sqrt{2\pi}}e^{-\frac{v^2}{2}} \nabla c\ .
\end{equation}
{It is easy to check that the choices of $\psi_0(v)$ and $\psi_1(v)$ satisfy the Hypothesis \ref{hypo1} and Hypothesis  \ref{hypo2}.}

By choosing the time step to be $\Delta t = 10^{-4}$ and the initial data to be
\begin{align*}
	&\rho_{0_\textnormal{macro}}(x) = \rho_{0_\textnormal{kinetic}}^\varepsilon(x) = 0.5 + u(x)\ , \quad c_0(x) = c_0^\varepsilon = 0.5\ , \quad g_0^\varepsilon(x, v) = 0\ ,
\end{align*}
where $u(x)$ is a uniformly distributed random function ranging in $(-0.1, 0.1)$,
we compute the solution until $t=40$. 
In Figure \ref{fig:density_eps_A_20} we start with a comparison between $\rho_\textnormal{kinetic}^{\varepsilon}$, for different values of $\varepsilon$ {($\e = 0.2$, purple curves, $\e = 0.1$, yellow curves and $\e = 0.05$, red curves) and $\rho_\textnormal{macro}$ (blue curves), for different simulation times $t=5,\ 20,\ 32,\ 40$ (from upper left to bottom right panels, respectively)}. When the aggregates are forming ($t=5$) or merging together ($t=32$), the discrepancy between the kinetic and the macroscopic solutions are  larger, specially for large values of $\e$ (purple line). As time progresses ($t=40$) this difference becomes smaller and we observe a very good agreement between the solutions of the kinetic and the macroscopic models for small values of the scaling parameter $\e$.

\begin{figure}[tbhp]
\centerline{\includegraphics[scale = 0.3]{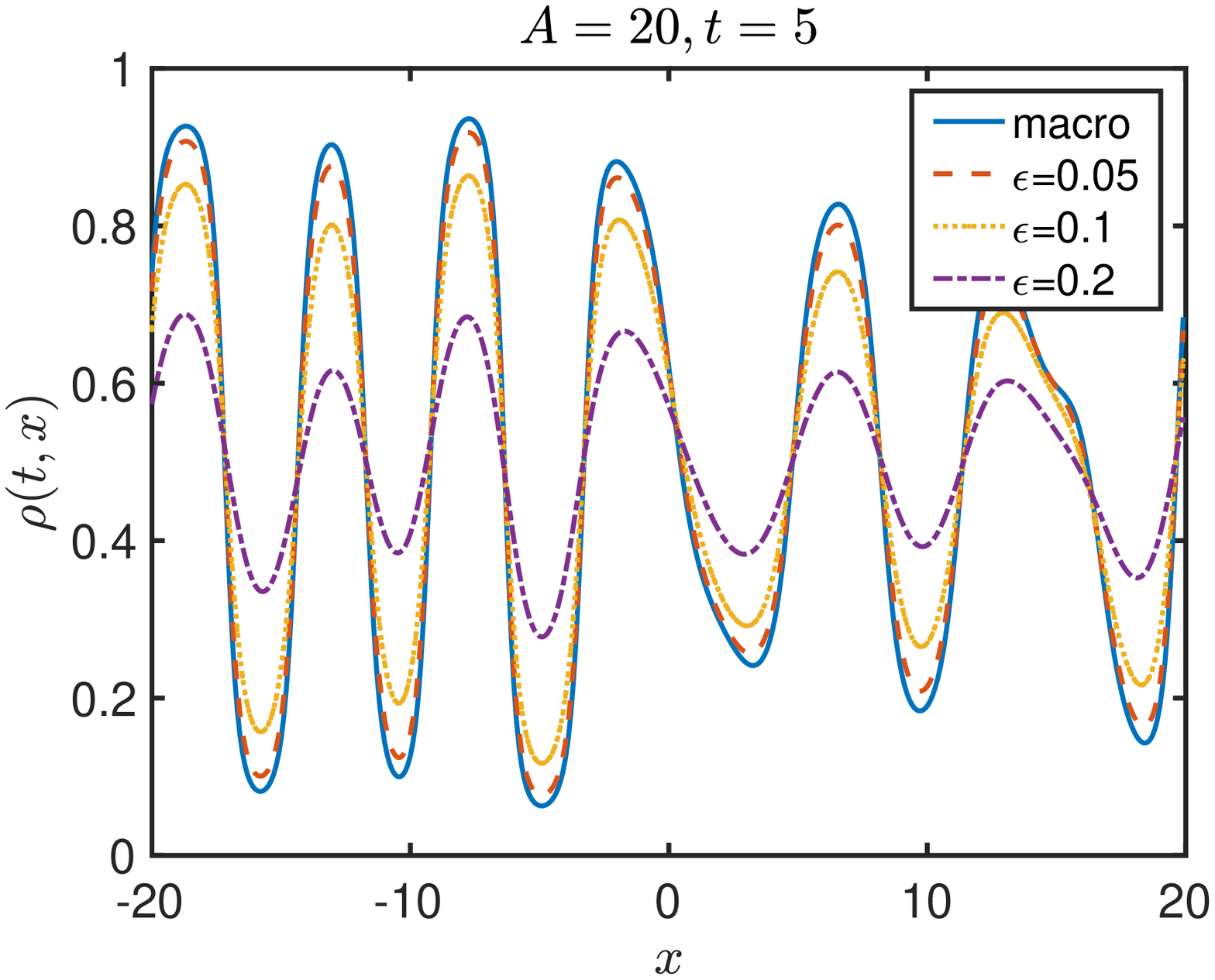}
\includegraphics[scale = 0.3]{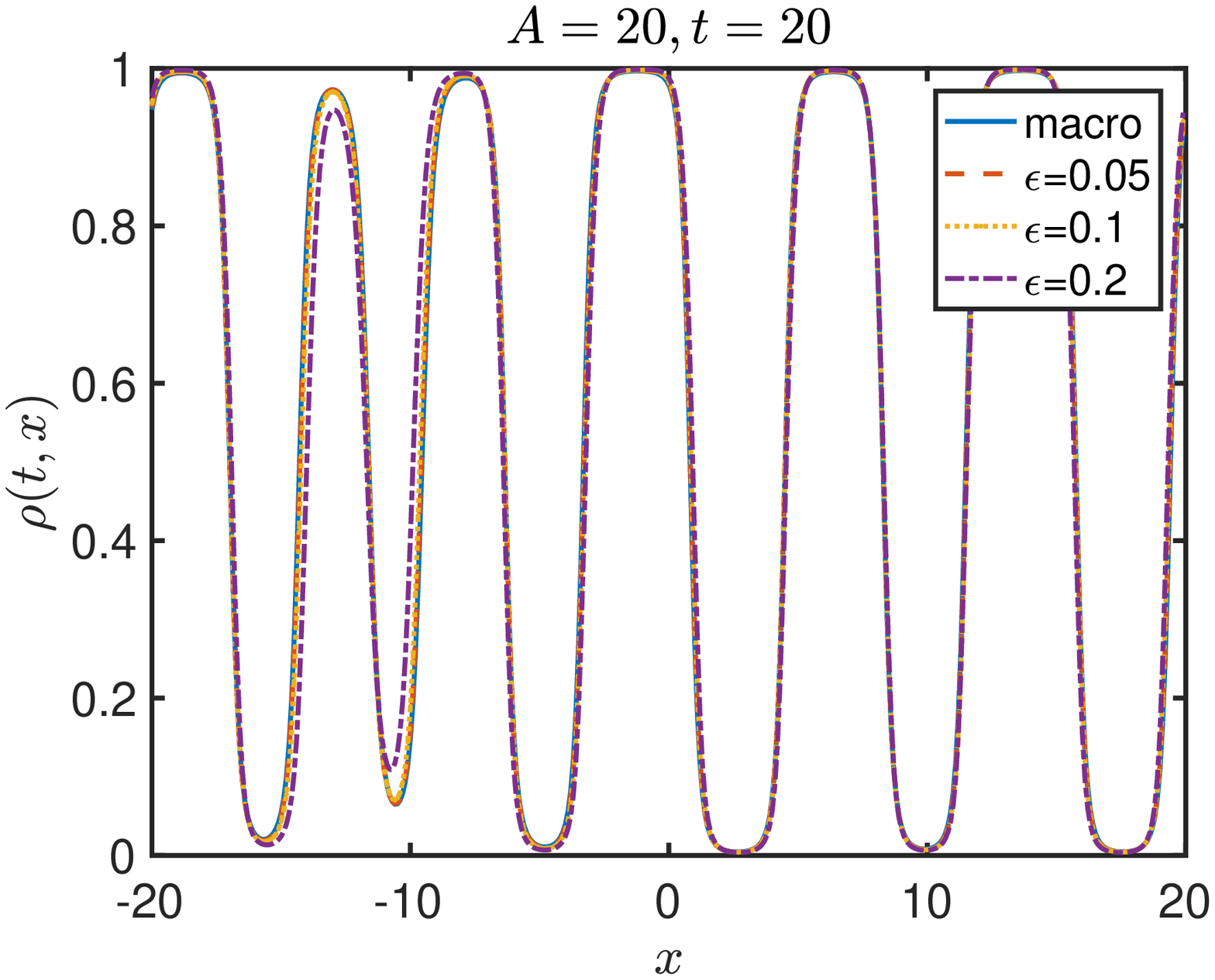}
}
\centerline{\includegraphics[scale = 0.3]{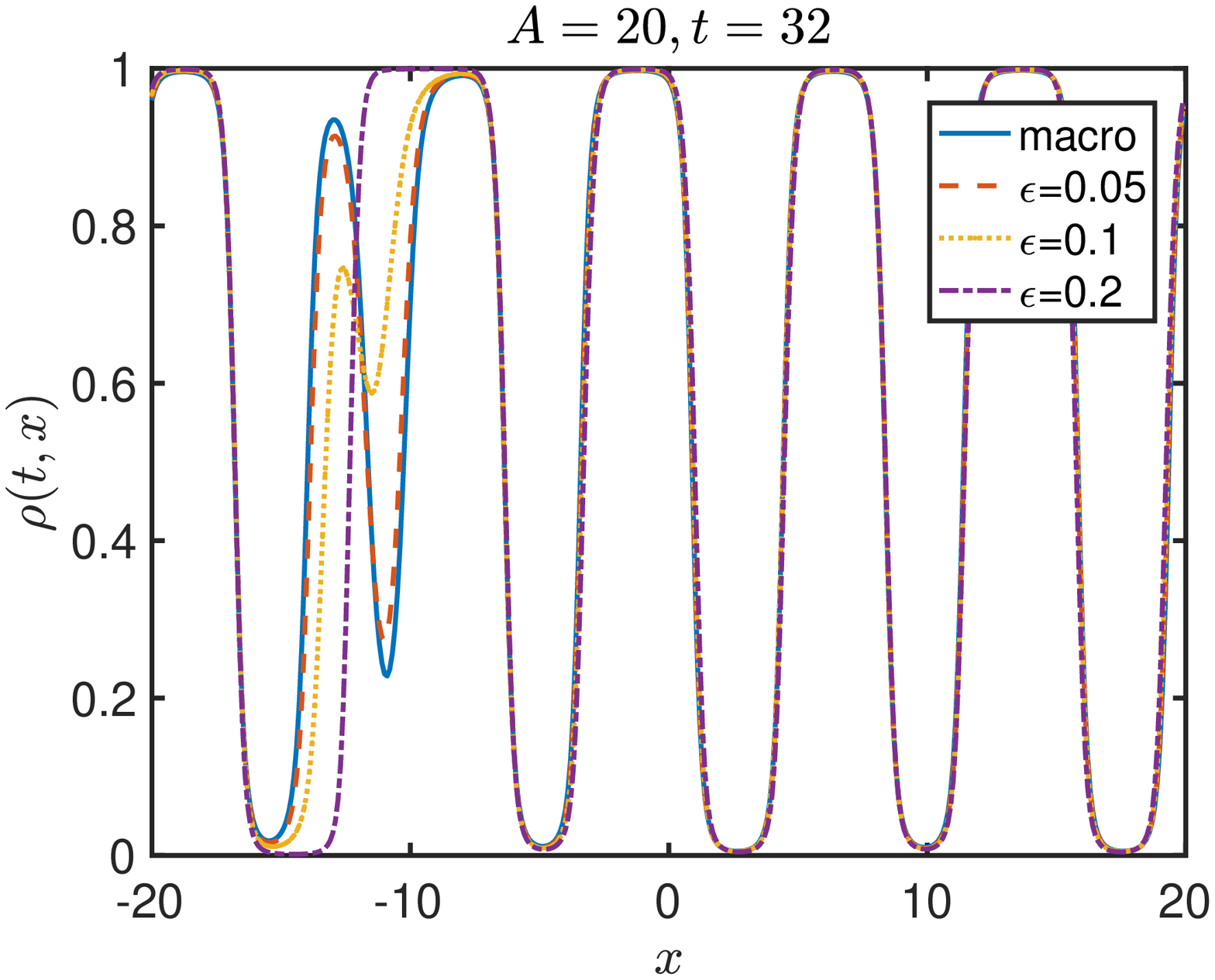}
\includegraphics[scale = 0.3]{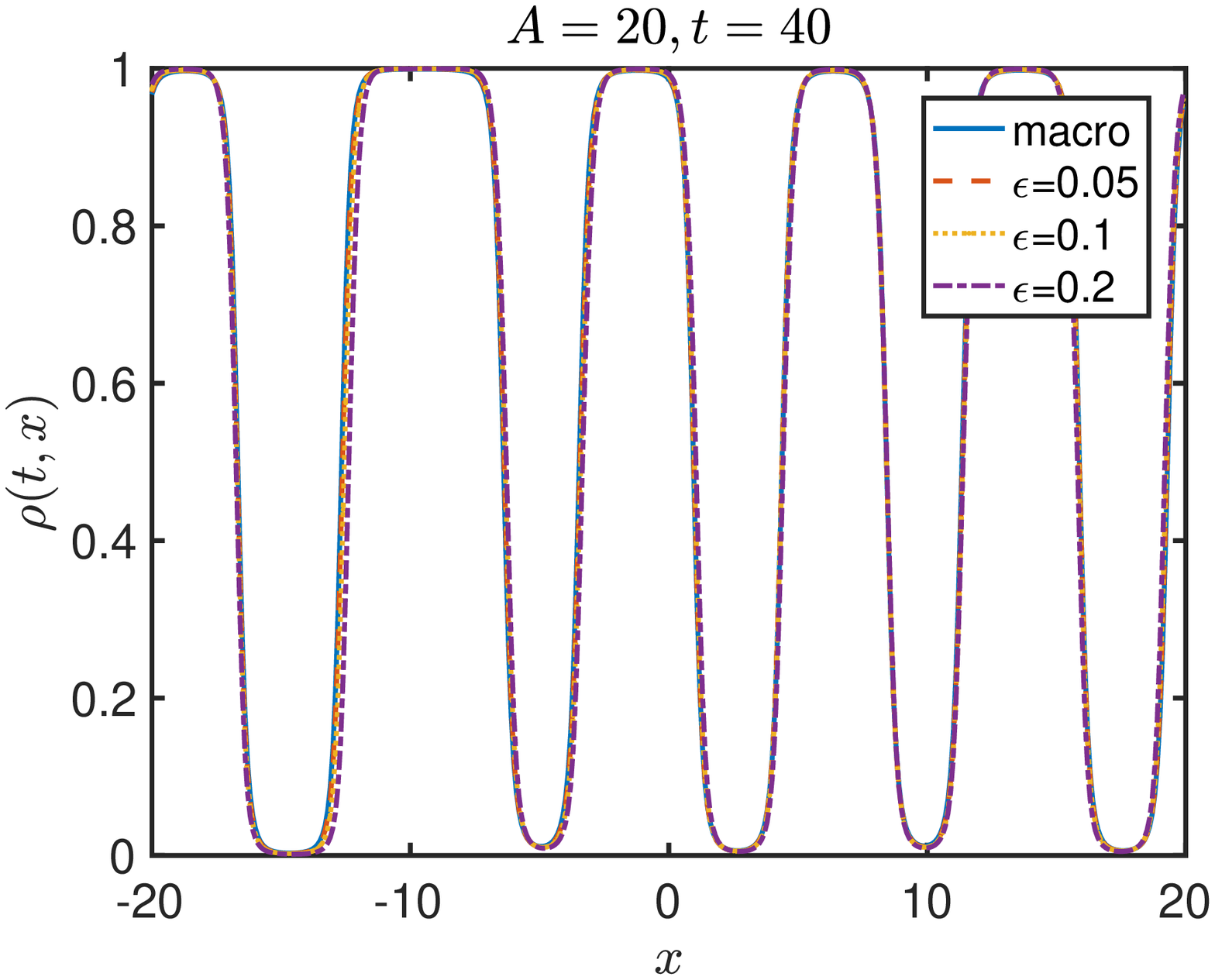}
}
\caption{Comparison of $\rho_\textnormal{macro}$ and $\rho_\textnormal{kinetic}^{\varepsilon}$ for $A=20$ and different  $\e$'s.}
\label{fig:density_eps_A_20}
\end{figure}

{In Figure \ref{fig:plot_energy_w_sol}, we show the evolution of the energy quantities $\mathcal{E}(t)$ given by \eqref{def:energy} (blue curve) and $\mathcal{E}_\varepsilon(t)$ as functions of time, for different values of $\e$: $\varepsilon=0.2$ (purple curve), $\e = 0.1$ (yellow curve) and $\e = 0.05$ (red curve), up to $t=50$. This figure shows that the energies of the kinetic and macroscopic models are in very good agreement}. The inset figures show the evolution in time of the macroscopic density (continuous blue line) and the kinetic density for different values of $\e$ (lines with the same style as for the energy). It is clear from these figures that the larger discrepancies between the kinetic and macro energies are indeed related with changes in the density profiles, for example when two aggregates merge together {(see the inset plots at $t=5$ and $t=32$)}. {Even in this critical case of aggregation formation we observe that the kinetic solution for $\e=0.05$ agrees with the macroscopic solution.}

A similar behaviour is observed in Figure \ref{fig:plot_energy_time} (left) where we plot the {relative} $L_2$-error between the kinetic and the macroscopic solutions {as a function of time} and for different values of $\e$ {($\e = 0.2$, black curve, $\e = 0.1$, blue curve and $\e = 0.05$, red curve)}. In agreement with the behaviour observed in Figure \ref{fig:plot_energy_w_sol} this error is {larger} at times $t=5$ and $t=32$, approximately, which corresponds {to times where aggregates are merging}.

In Figure \ref{fig:plot_energy_time} (right) we {show} the rate of convergence of the {relative} $L_2$-error between the kinetic, $\rho_\textnormal{kinetic}^\e$, and macroscopic, $\rho_\textnormal{macro}$, solutions for different values of $\e$, at different times. {We observe that the error between both solutions decreases as $\e$ decreases, and the convergence order is around 1.5 in {$\ell^2$} norm. Altogether, these first results suggest that the macroscopic and kinetic models are in good agreement for small values of $\e$, and that the kinetic model converges towards the macroscopic model as $\e \rightarrow 0$ in the 1D case. {In Figure \ref{fig:plot_energy_time} (left) the kinetic model seems to converge faster to the ``aggregated-state'' compared to the macroscopic dynamics} i.e. for large values of $\e$ (black curve) we see an early merging of aggregates, compared to the blue and red curves. These changes in speed could
be due to the diffusion scaling, in which the macroscopic model is obtained in a regime where there are many velocity jumps but small net displacements in one order of time. Therefore, in the macroscopic setting, each particle interacts with many more particles than
in the kinetic model, which could result in a delay in the aggregation process. In the next section, we take a step further and analyse the evolution of the pattern sizes in time as function of the chemotactic sensitivity  $A$.}

\begin{figure}[tbhp]
\centerline{\includegraphics[scale = 0.3]{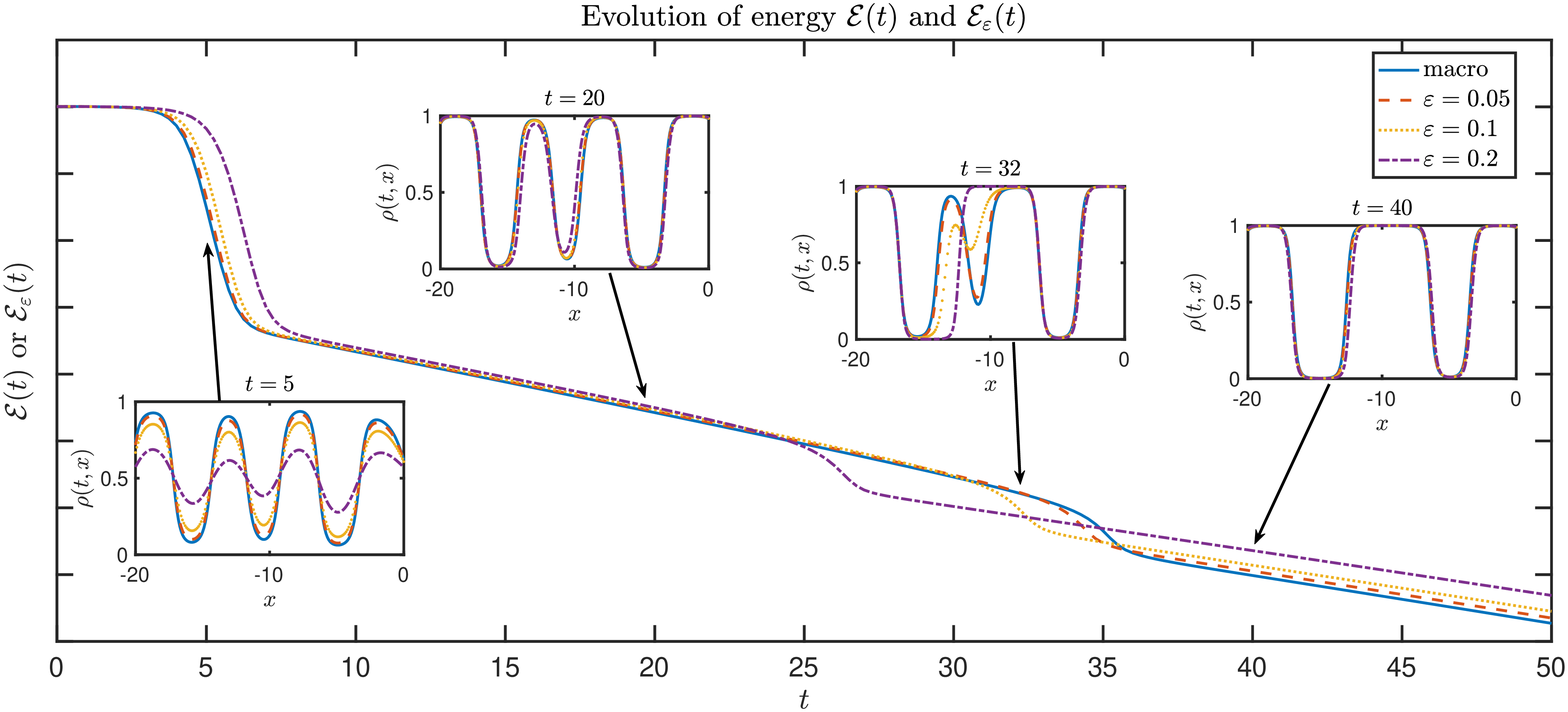}
}
\caption{Evolution of $\mathcal{E}(t)$ and $\mathcal{E}_\e(t)$ along with the comparison between the kinetic solutions $\rho_\textnormal{kinetic}^\varepsilon$ and the macroscopic solutions $\rho_\textnormal{macro}(t,x)$ at $t=5,\ 20,\ 32,\ 40$ for $A=20$.}
\label{fig:plot_energy_w_sol}
\end{figure}

\begin{figure}[tbhp]
\centerline{
\includegraphics[scale = 0.3]{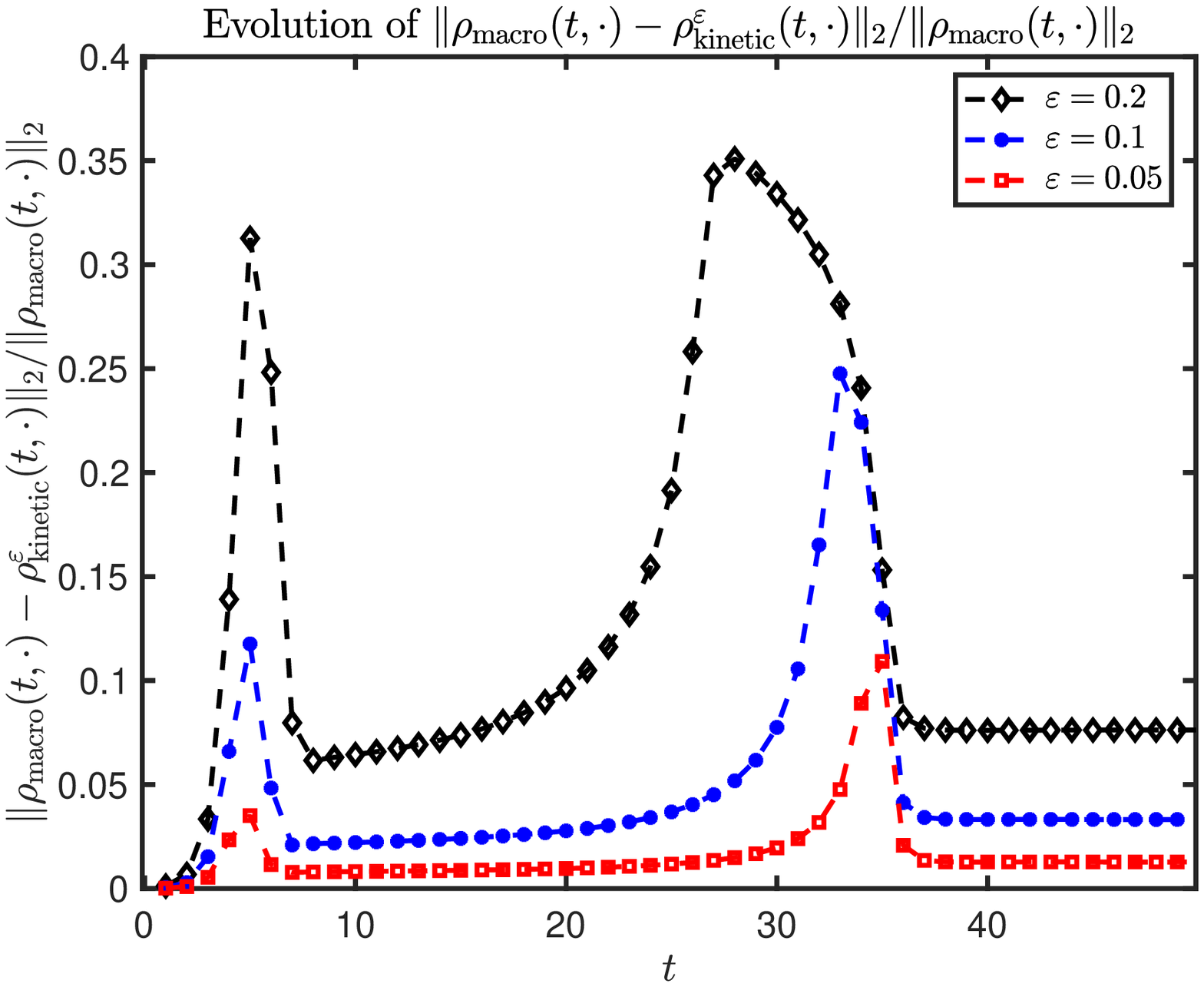}
\includegraphics[scale = 0.3]{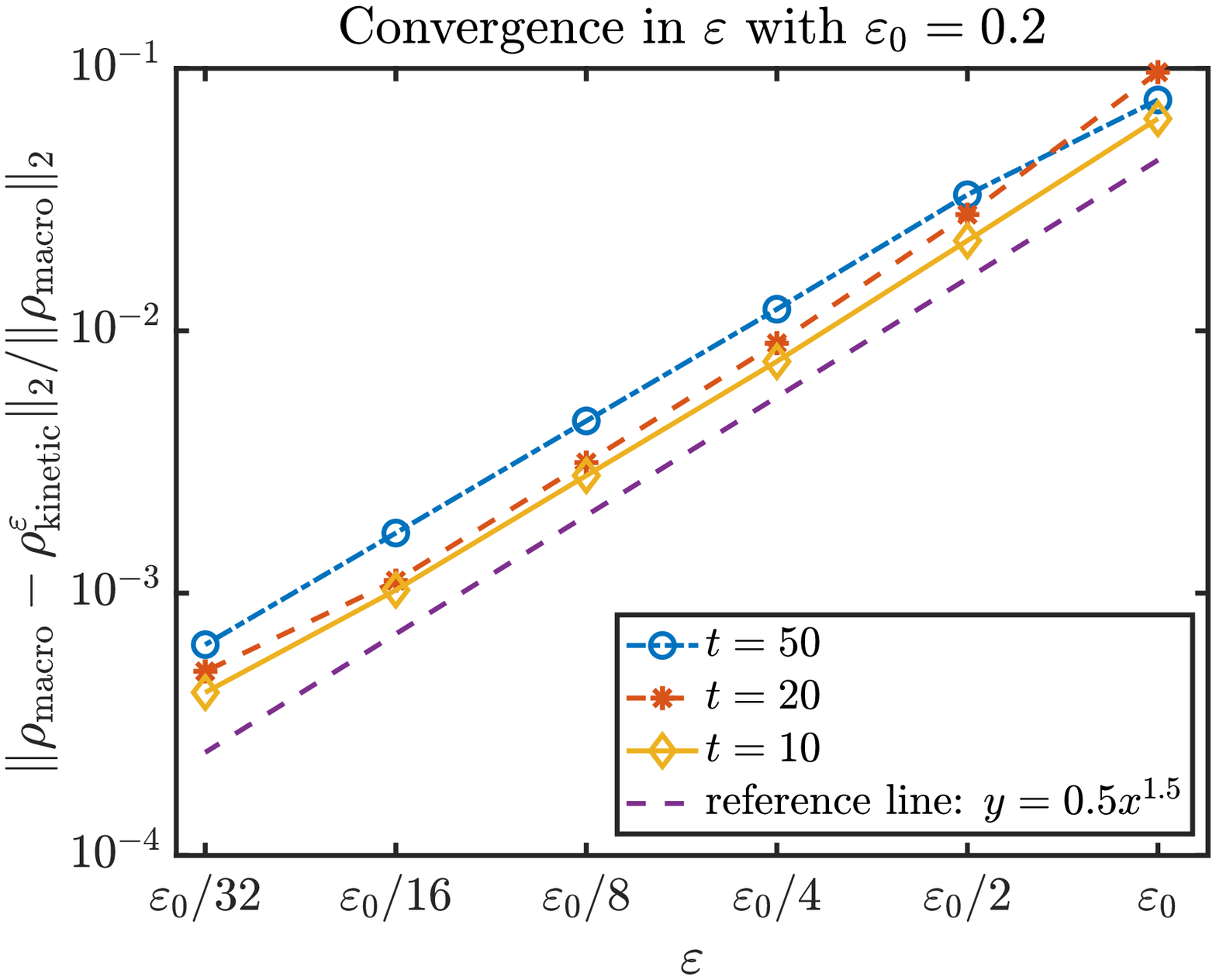}
}
\caption{Left: Evolution of the {relative} $L_2$-error $\frac{\|\rho_{\rm{macro}}(t, \cdot) - \rho_{\rm{kinetic}}(t,\cdot)\|_2}{\|\rho_{\rm{macro}}(t, \cdot)\|}$ over time with $A=20$. Right: Convergence of the relative $L_2$-error in $\e$  at $t=10,\  20, \ 50$. The numerical setting is the same as in Figure \ref{fig:density_eps_A_20}.}
\label{fig:plot_energy_time}
\end{figure}

\subsection{Pattern formation from a perturbed 1D initial data}
With a strong chemotaxis effect, cells will aggregate to form patterns in regions where the chemoattractant is highly concentrated. 
For the volume-exclusion Keller-Segel model \eqref{eq:macro_proliferation}, a relation between the aggregate size from a perturbed initial data and the strength of chemotaxis effect $A$ {was proven in \cite{almeida2020treatment} via linear stability analysis}. 
In this section, we numerically verify this relation for both the kinetic \eqref{eq:kinetic_approx} and the macroscopic model \eqref{eq:macro_proliferation}. 
{Again, we only consider here the 1D case with periodic boundary conditions in space. 
More specifically, we consider the domain $(x,v)\in (-20,20)^2$ with a uniform mesh {$\dx = 0.1,\  \Delta v =  0.2$}. We choose the time step $\Delta t = 10^{-3}$ and starting from a randomly perturbed initial data
\begin{equation*}
    \rho_{0_\textnormal{macro}}(t,x) = \rho_{0_\textnormal{kinetic}}^{\varepsilon}(t,x) = 0.5 + u(x)\ ,
\end{equation*}
we let the simulation run until $t=20$. 
To avoid effects due to the randomness of the initial data, we will compute the pattern size for 10 solutions, each evolved from some random initial data, and simply average. }

To numerically compute the pattern sizes, we consider the Fourier transform of the density function $\rho(t,x)$ (macro and kinetic) and extract the frequency that corresponds to the maximal Fourier mode. 
Specifically, we consider
\begin{equation*}
    k_{\rm max} = \rm{argmax}_{\lambda} (|\hat{\rho}(\lambda)|)\ , 
\end{equation*}
where $\hat{\rho}(\lambda) = \mathcal{F}(\rho)(t,x)$ is the Fourier transform of the density function $\rho(t,x)$. 
Then, $1/k_{\rm max}$ can be used to describe the pattern size. 

\begin{figure}[tbhp]
\centerline{
\includegraphics[scale = 0.3]{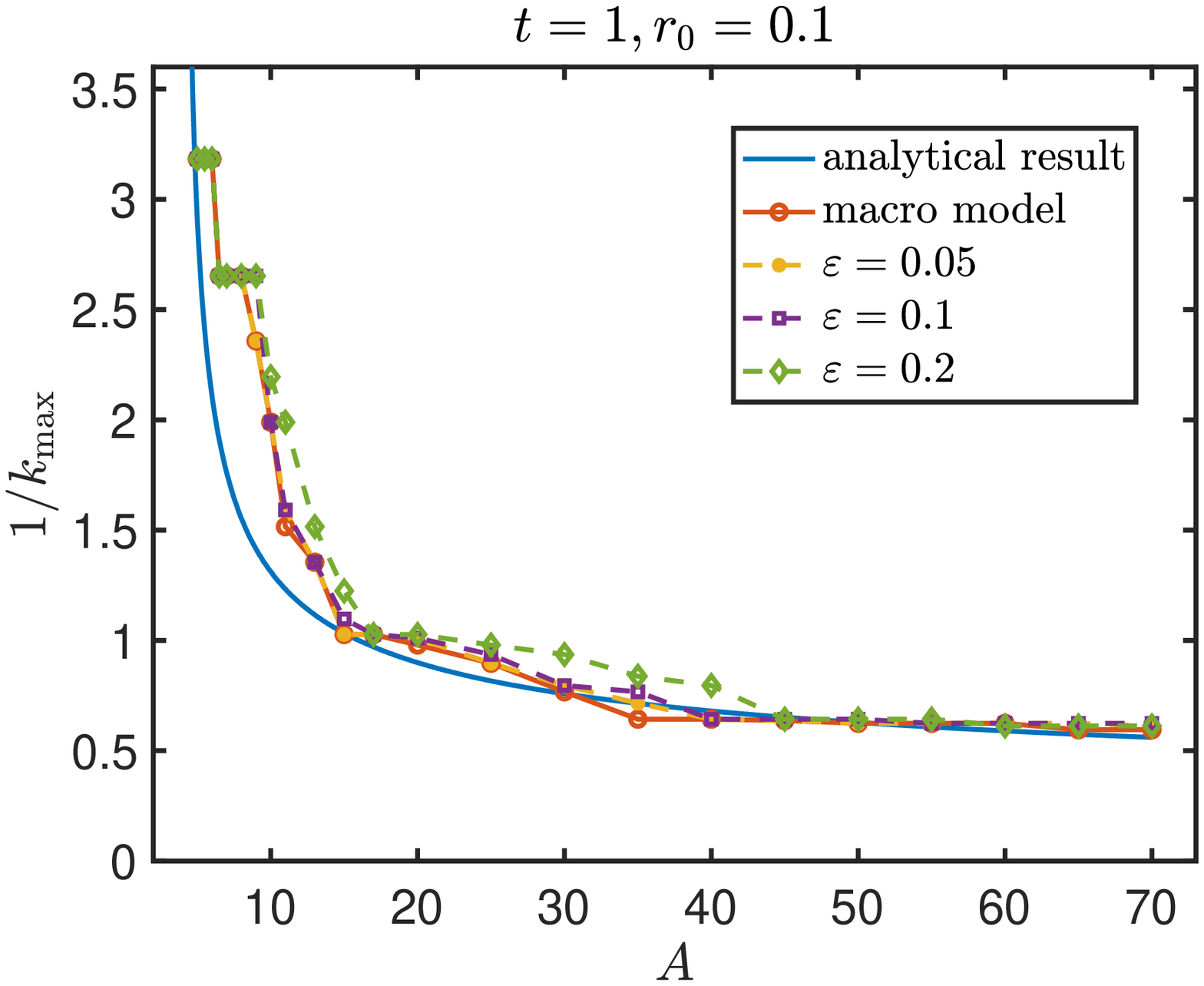}
\includegraphics[scale = 0.3]{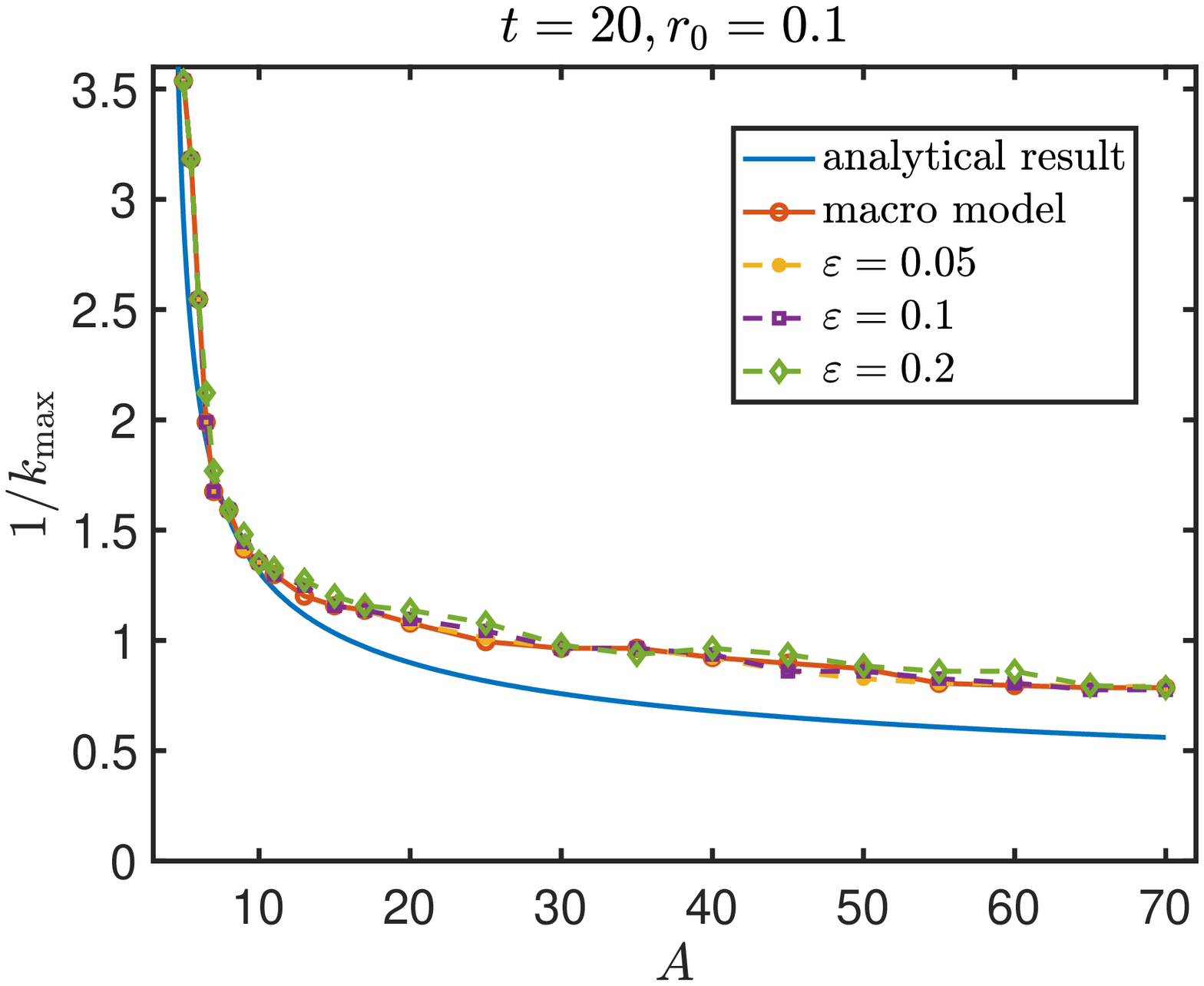}
}
\caption{Comparison of $\frac{1}{k_{\textnormal{max}}}$ between the analytical, the numerical results of the macro and the kinetic model. The numerical result is computed by averaging 10 solutions, each evolved from a random initial data around $0.5$.}
\label{fig:k2_inv_compare}
\end{figure}

In Figure \ref{fig:k2_inv_compare} we show the overall pattern sizes $1/k_{\rm max}$ as a function of the chemotaxis sensitivity $A$ {at times $t=1$ (left panel) and $t=20$ (right panel). For each time, we plot the analytical prediction of $\frac{1}{k_{\max}}$ (blue line, see \cite{almeida2020treatment}), the numerical result for the macroscopic model (red line) and the results for the kinetic model with various values of $\e$ ($\e = 0.05$ in yellow, $\e = 0.1$ in purple and $\e = 0.2$ in green lines). 
As one can observe, we obtain a very good agreement between the predicted pattern sizes and the ones computed numerically for both the macroscopic and kinetic models. As predicted by the stability analysis performed in \cite{almeida2020treatment}, the pattern sizes decrease as the chemotactic sensitivity $A$ increases, and we recover the critical value $A^* \approx 6.9$ bellow which there are no patterns, i.e for which the perturbations are damped and the solution comes back to a homogeneous distribution. }

\subsection{2D numerical examples}
The numerical schemes for both the kinetic model \eqref{eq:scheme} and the macroscopic model \eqref{scheme:limit} can be generalized to multi-dimensional problems, where the tensor-product grid is adopted {(see Appendix \ref{sec:appendix-FD-2D} for a detailed description of the 2D numerical scheme for the kinetic model)}. 

In this section we perform 2D simulations for both the kinetic model \eqref{eq:macro_proliferation} and the volume-exclusion Keller-Segel model \eqref{eq:kinetic_approx}. 
We consider the computation domain $\Omega_\mathbf{x} = \{\mathbf{x} = (x_1,x_2)\in\mathbb{R}^2: -20\le x_1, x_2 \le 20\}$ with a uniform mesh $\Delta x_1 = \Delta x_2 = 0.1$ and periodic boundary conditions.  
For the kinetic model \eqref{eq:kinetic_approx} we need to further define the domain of the velocity $\Omega_\mathbf{v} = \{\mathbf{v} = (v_1,v_2)\in\mathbb{R}^2: -10\le v_1, v_2 \le 10\}$ with a uniform mesh $\Delta v_1 = \Delta v_2 = 0.2$ and zero boundary conditions. 
We choose $r_0=0.1$, $\rho_{\rm max} = 0.5$, $\bar{\rho} = 1$ and 
\begin{equation*}
    \psi_0(\mathbf{v}) = \frac{1}{2\pi} e^{-\frac{|\mathbf{v}|^2}{2}}, \quad \psi_1(\mathbf{v}\ , \nabla_\mathbf{x} c) = \frac{\mathbf{v}}{2\pi} e^{-\frac{|\mathbf{v}|^2}{2}}\cdot \nabla_\mathbf{x} c\ .
\end{equation*}
As in the 1D case, we can check that the choices of $\psi_0(\mathbf{v})$ and $\psi_1(\mathbf{v})$ satisfy the Hypotheses \ref{hypo1} and \ref{hypo2}.

We fix $\Delta t = 10^{-2}$ and choose the initial data to be
\begin{align*}
	&\rho_{0_\textnormal{macro}}(\mathbf{x}) =  \rho_{0_\textnormal{kinetic}}^\varepsilon(\mathbf{x}) =  0.5+u(x)\ , \quad c_0(\mathbf{x}) = c_0^\varepsilon(\mathbf{x}) = 0.5\ , 
\end{align*}
where $u(x)$ is a randomly chosen uniformly distributed function ranging in $(-0.1, 0.1)$. 
In Figure \ref{fig:2D_macro} we show the numerical results at $t=5$ (first row), $t=20$ (second row) and $t=50$ (third row) for the macroscopic model with different chemotaxis sensitivities $A=6$ (first column), $A=20$ (second column) and $A=50$ (third column). As one can observe, starting from an initial data perturbed around the homogeneous value $0.5$, we obtain the formation of labyrinthic patterns for a chemotactic sensitivity $A>6$ (middle and right columns), while the solution dampens to the homogeneous state for $A=6$ (left column), in agreement with the predictions of the stability analysis performed in \cite{almeida2020treatment} and the results in Figure \ref{fig:k2_inv_compare}. Moreover, we observe that larger values of the chemotactic sensitivity $A$ leads to sharper layers near the boundary of the patterns (compare middle and right columns) as expected. 

In Figure \ref{fig:2D_compare} we compare the solutions of the macroscopic 2D model (top row) with the solutions of the 2D kinetic model for $\varepsilon = 10^{-2}$ (bottom row), for two values of the chemotactic sensitivity $A = 20$ (first and third columns) and $A = 50$ (second and fourth columns), and for different values of the initial data $\rho_0$: $\rho_0 = 0.5$ (first two columns) and $\rho_0 = 0.1$ (last two columns). As observed in \cite{painter2002volume}, we recover the formation of different types of patterns as a function of the initial condition for both the kinetic and the macro solution, i.e. labyrinthic patterns in the case $\rho_0 = 0.5$ and round patterns for $\rho_0 = 0.1$ (compare the first two columns with the last two). Moreover, we observe that the pattern sizes decrease and become sharper when the chemotactic sensitivity $A$ increases also for the kinetic solution.

In order to quantify the differences between the kinetic and macroscopic 2D models, we show in Figure \ref{fig:2D_convergence} (left) the evolution in time of the relative $L^2$-error between the macroscopic and kinetic models for different values of $\varepsilon$: $\varepsilon = 10^{-2}$ (blue curve), $\varepsilon = 10^{-4}$ (red curve), $\varepsilon = 10^{-6}$ (yellow curve), $\varepsilon = 10^{-8}$ (purple curve), and Figure \ref{fig:2D_convergence} (right) shows this relative error as function of $\varepsilon$ for different time points: $t=1$ (blue curve), $t=3$ (red curve), $t=5$ (yellow curve) and $t=10$ (purple curve). We observe that the relative error between both models decreases as $\varepsilon$ decreases, and the reference line $y = x$ (green curve) shows that the rate of convergence of the kinetic model towards the macroscopic one is roughly $\mathcal{O}(\varepsilon)$.

\begin{figure}[tbhp]
\centerline{
\includegraphics[scale = 0.25]{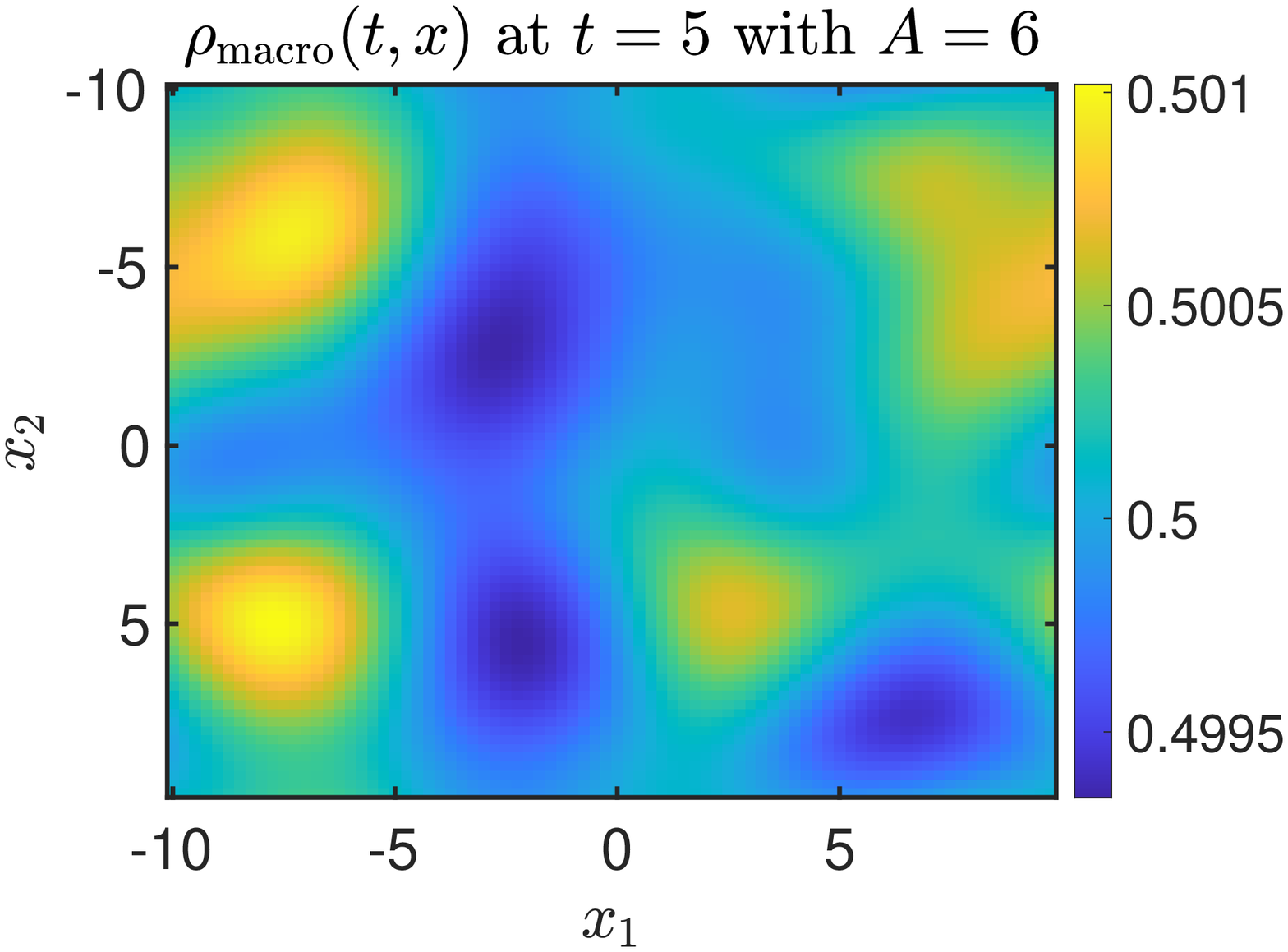}
\includegraphics[scale = 0.25]{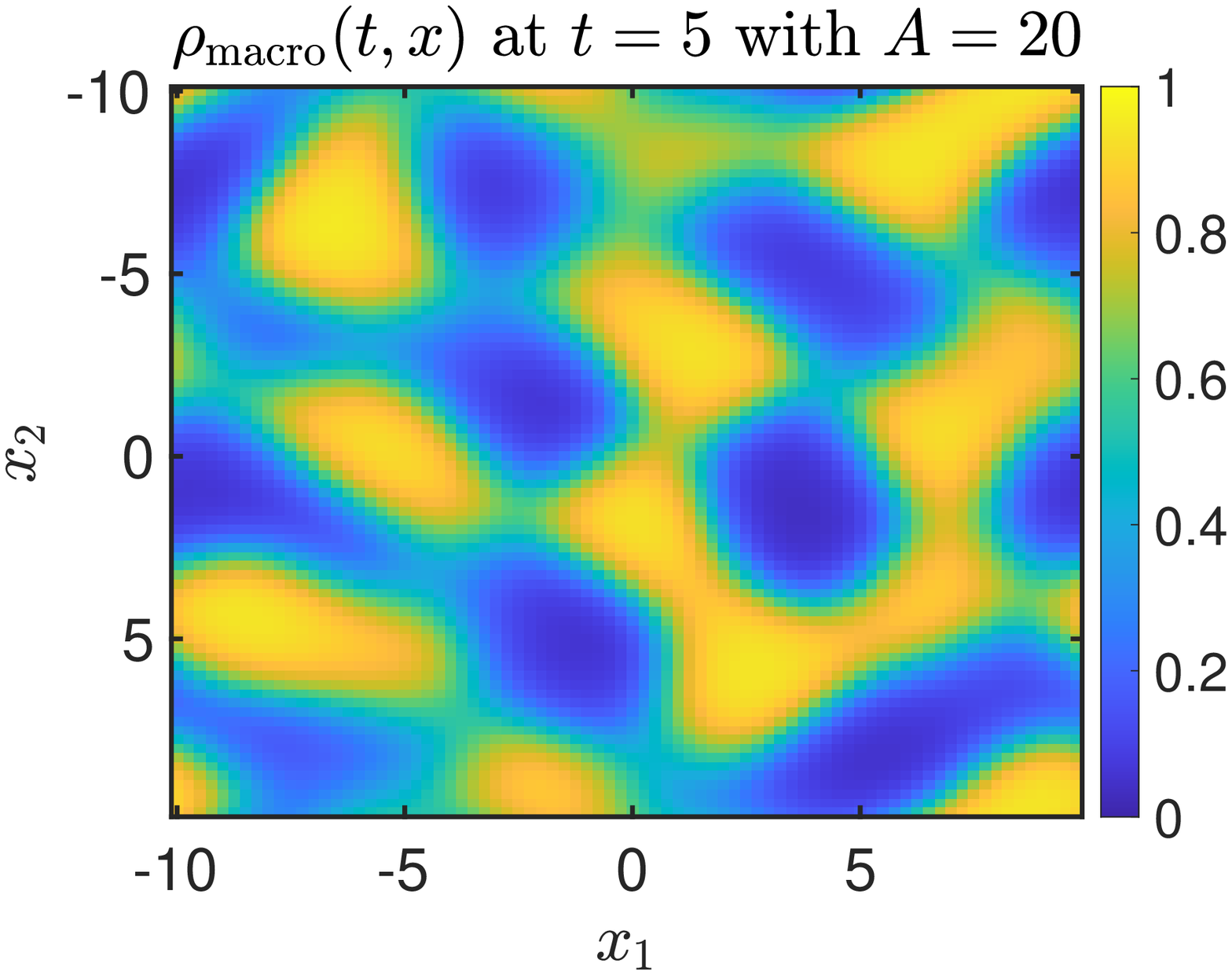}
\includegraphics[scale = 0.25]{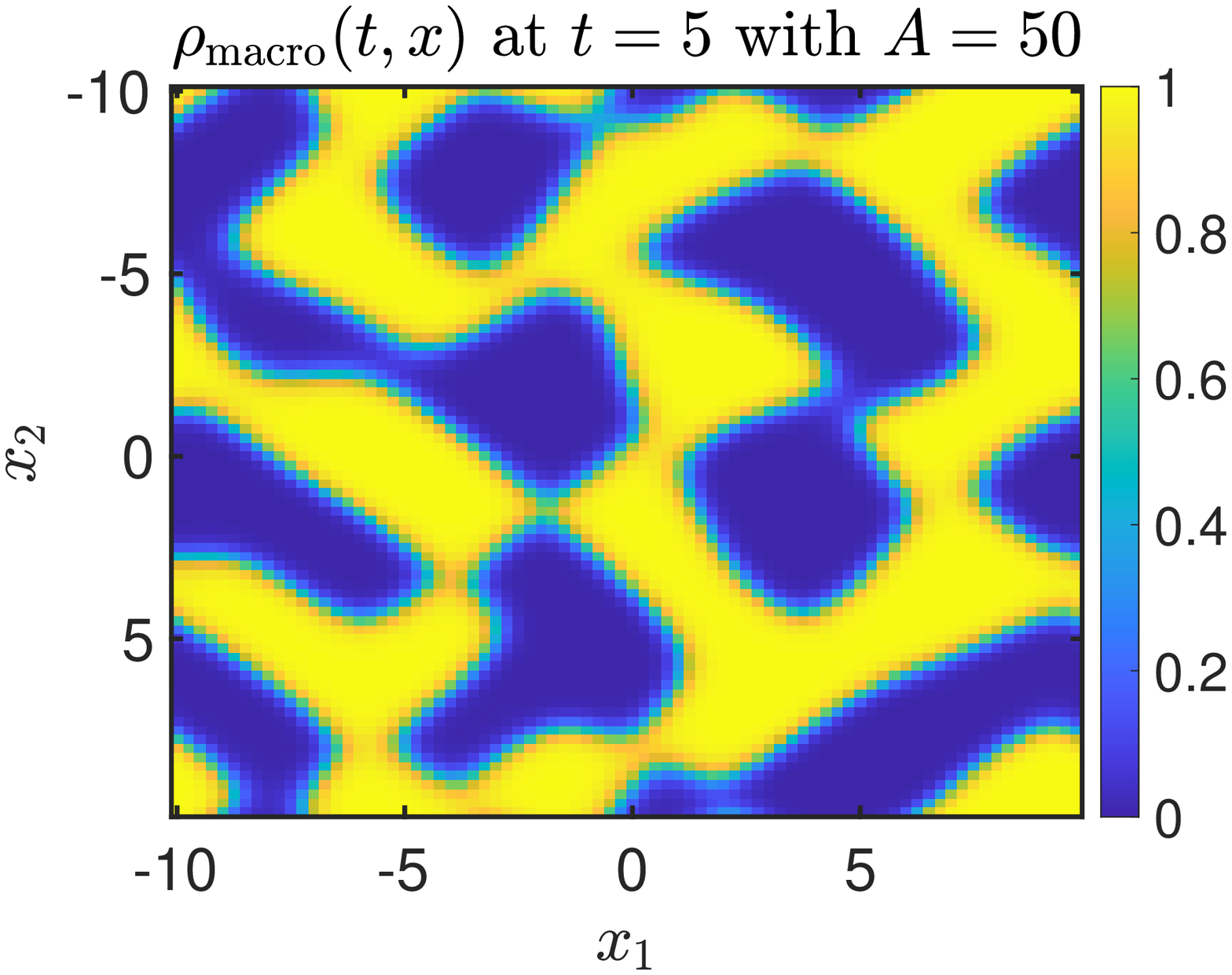}
}
\centerline{
\includegraphics[scale = 0.25]{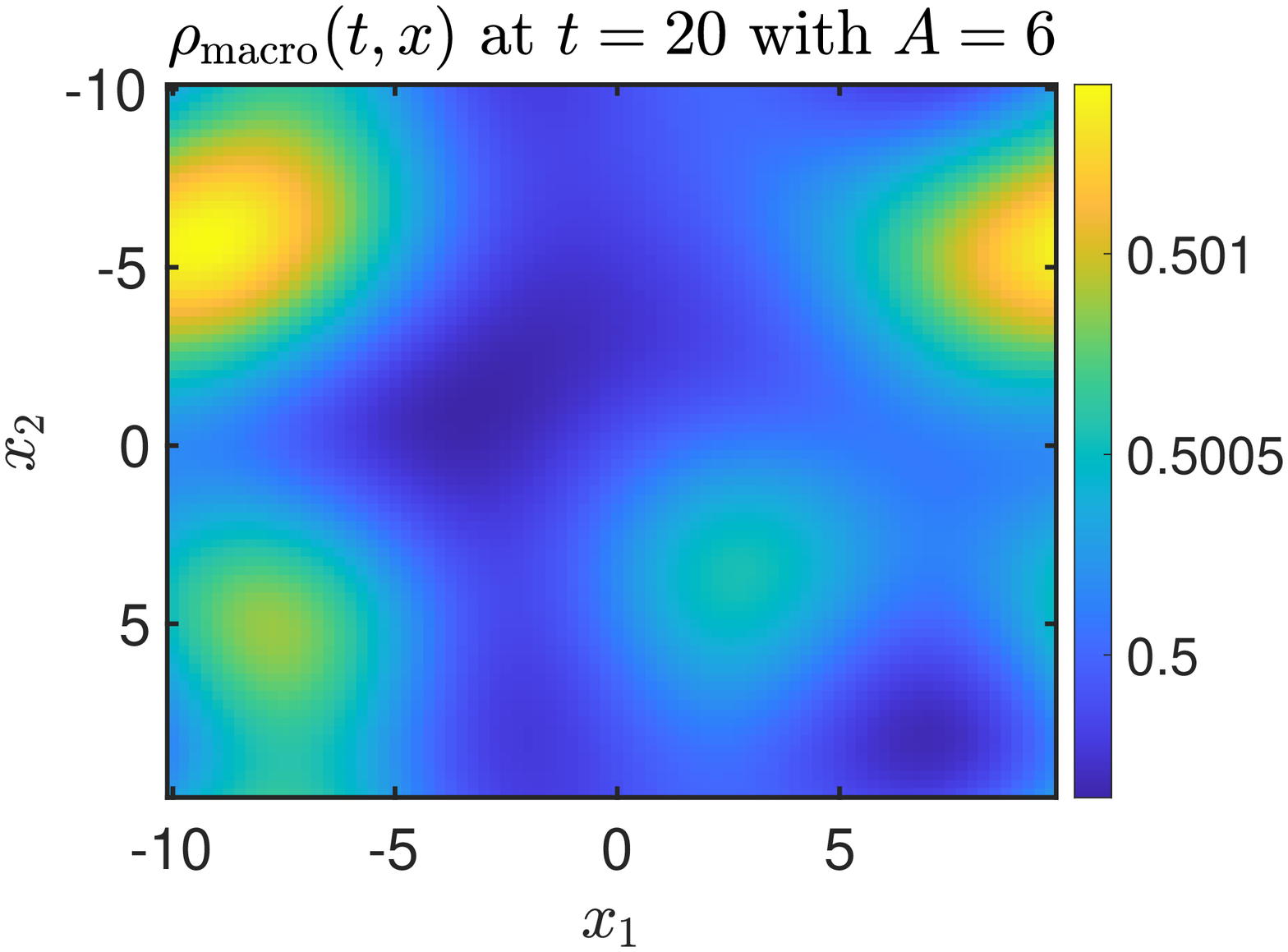}
\includegraphics[scale = 0.25]{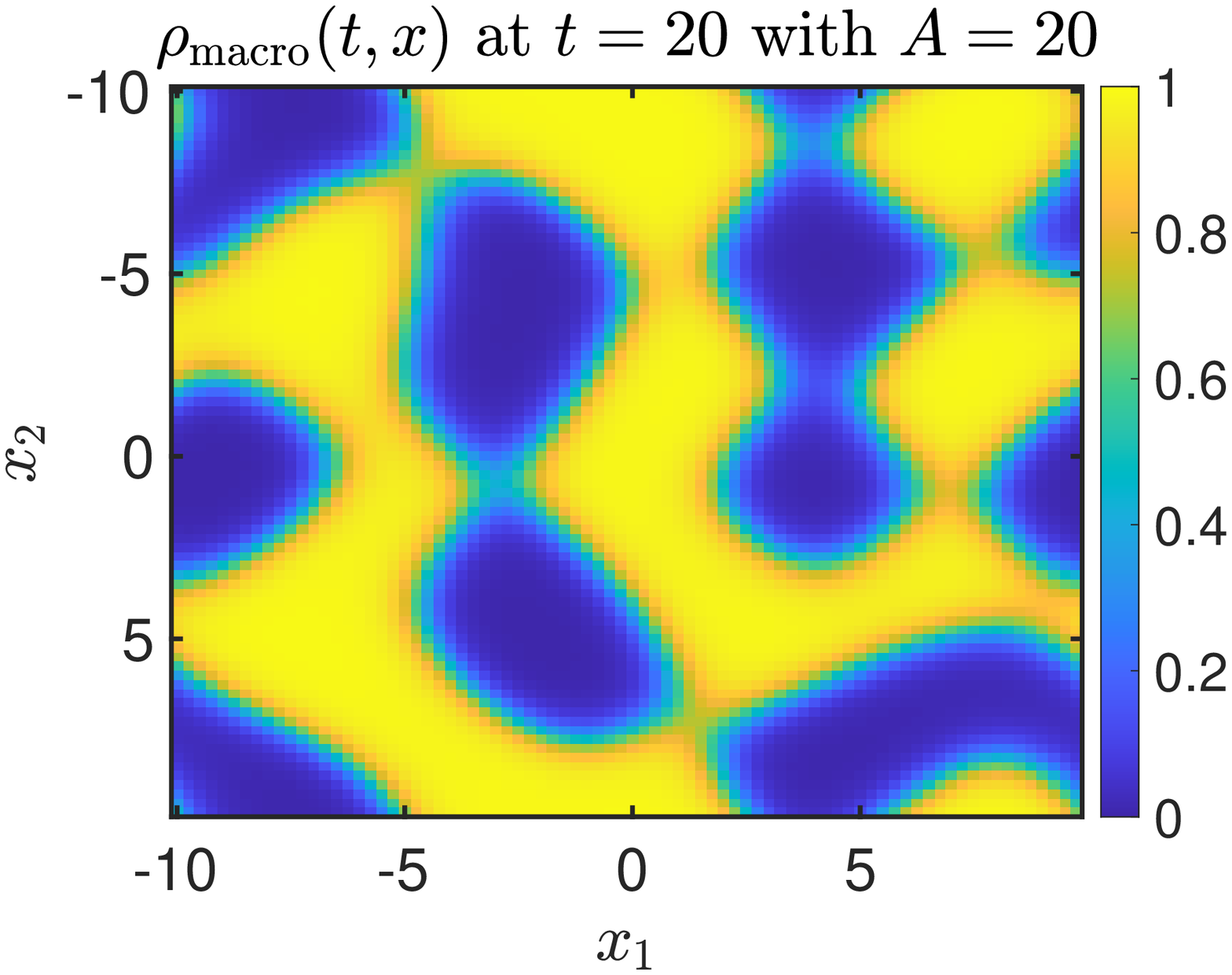}
\includegraphics[scale = 0.25]{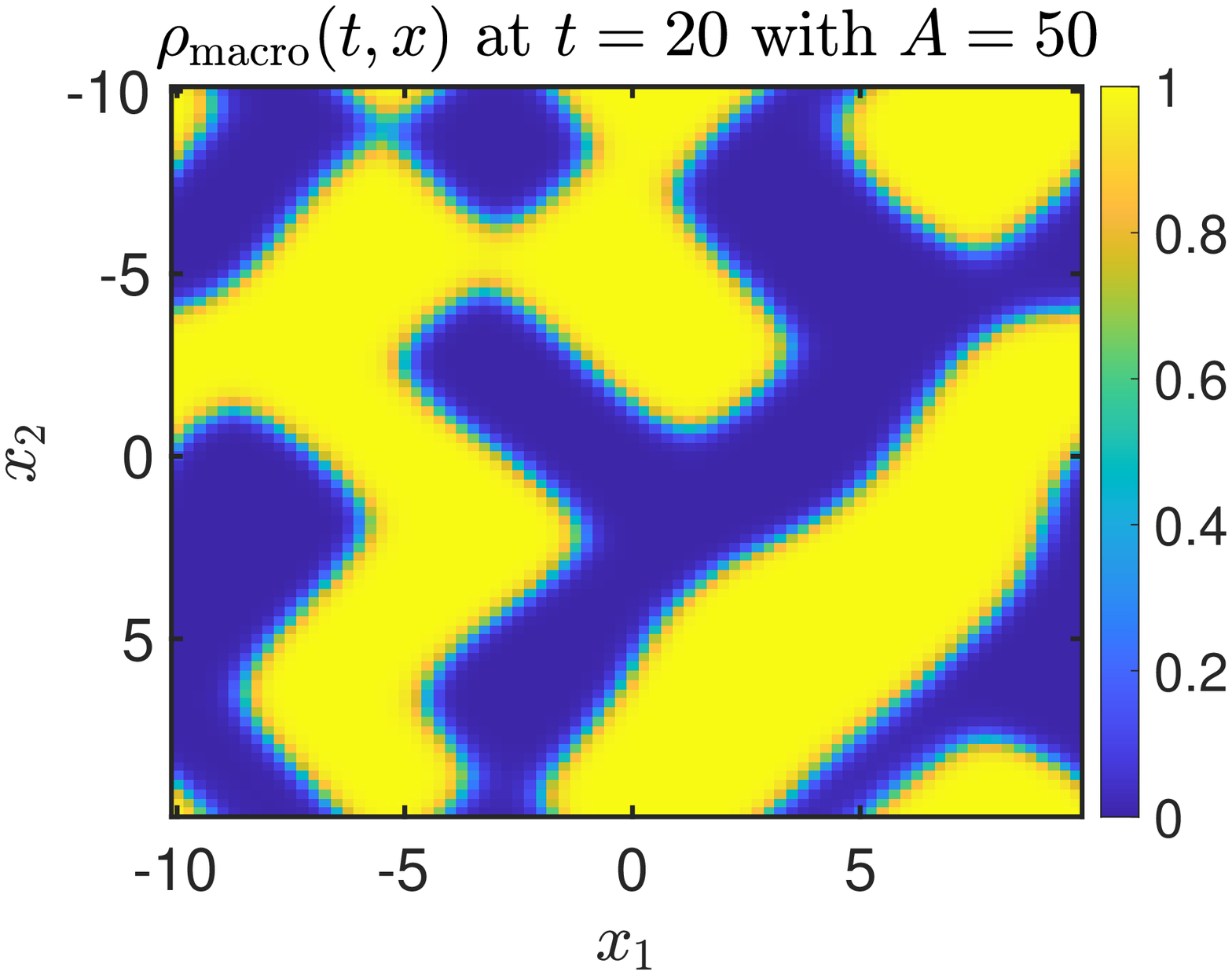}
}
\centerline{
\includegraphics[scale = 0.25]{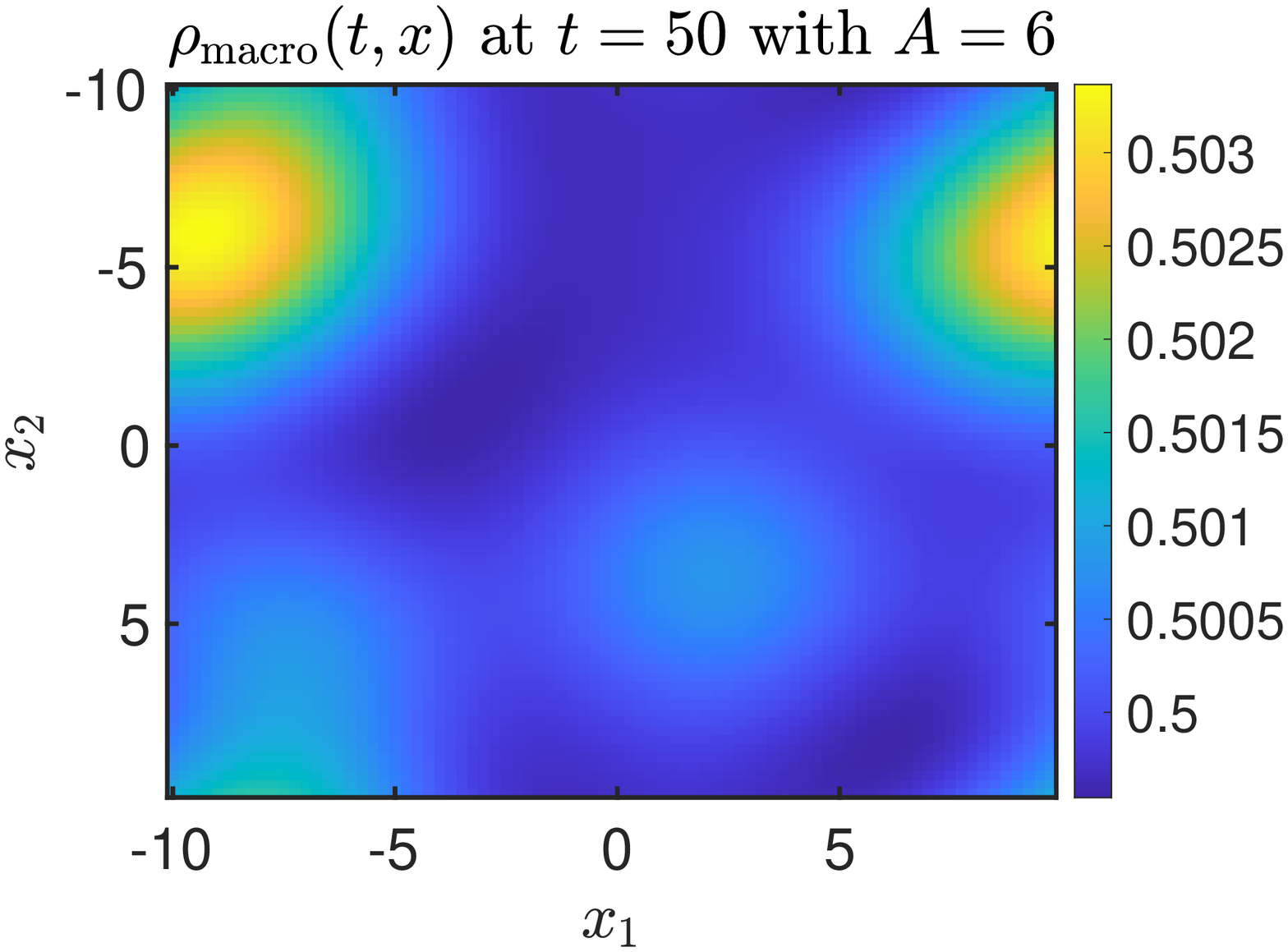}
\includegraphics[scale = 0.25]{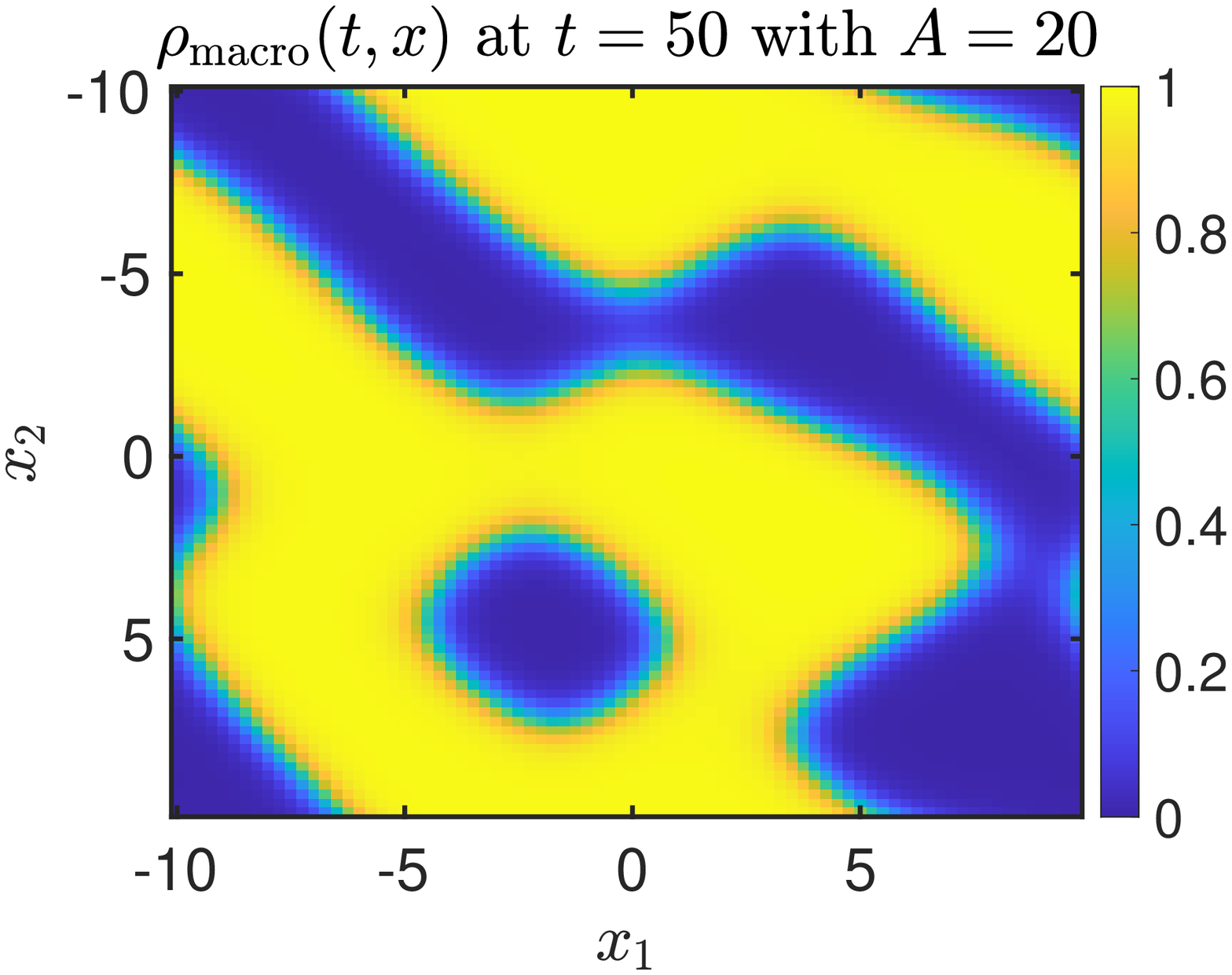}
\includegraphics[scale = 0.25]{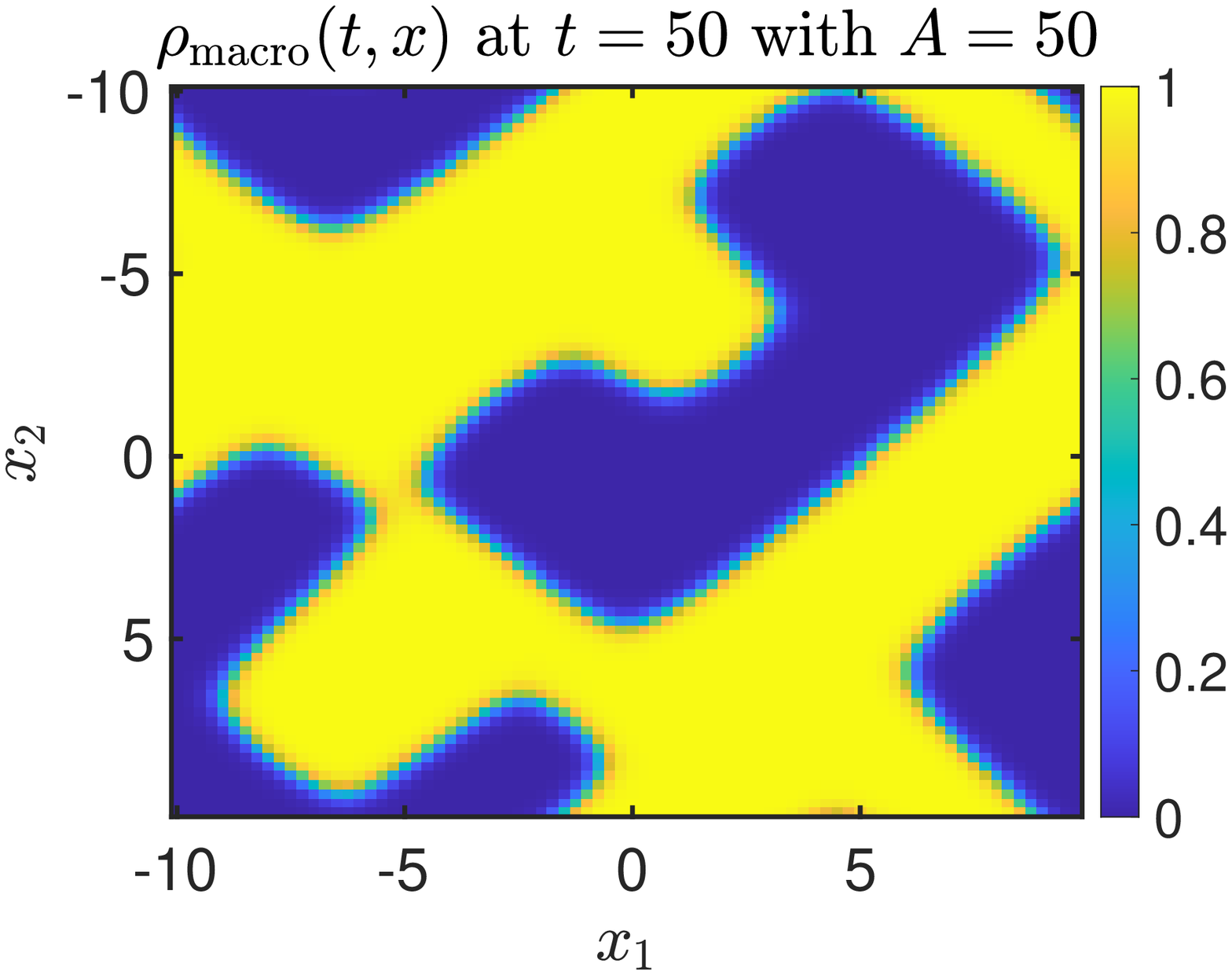}
}
\caption{Plots of density $\rho_\textnormal{macro}(t,\mathbf{x})$ at $t=5,\ 20,\ 50$ (from top to bottom) with $A=6,\ 20,\ 50$ (from left to right), respectively.}
\label{fig:2D_macro}
\end{figure}

\begin{figure}[tbhp]
\centerline{
\includegraphics[scale = 0.2]{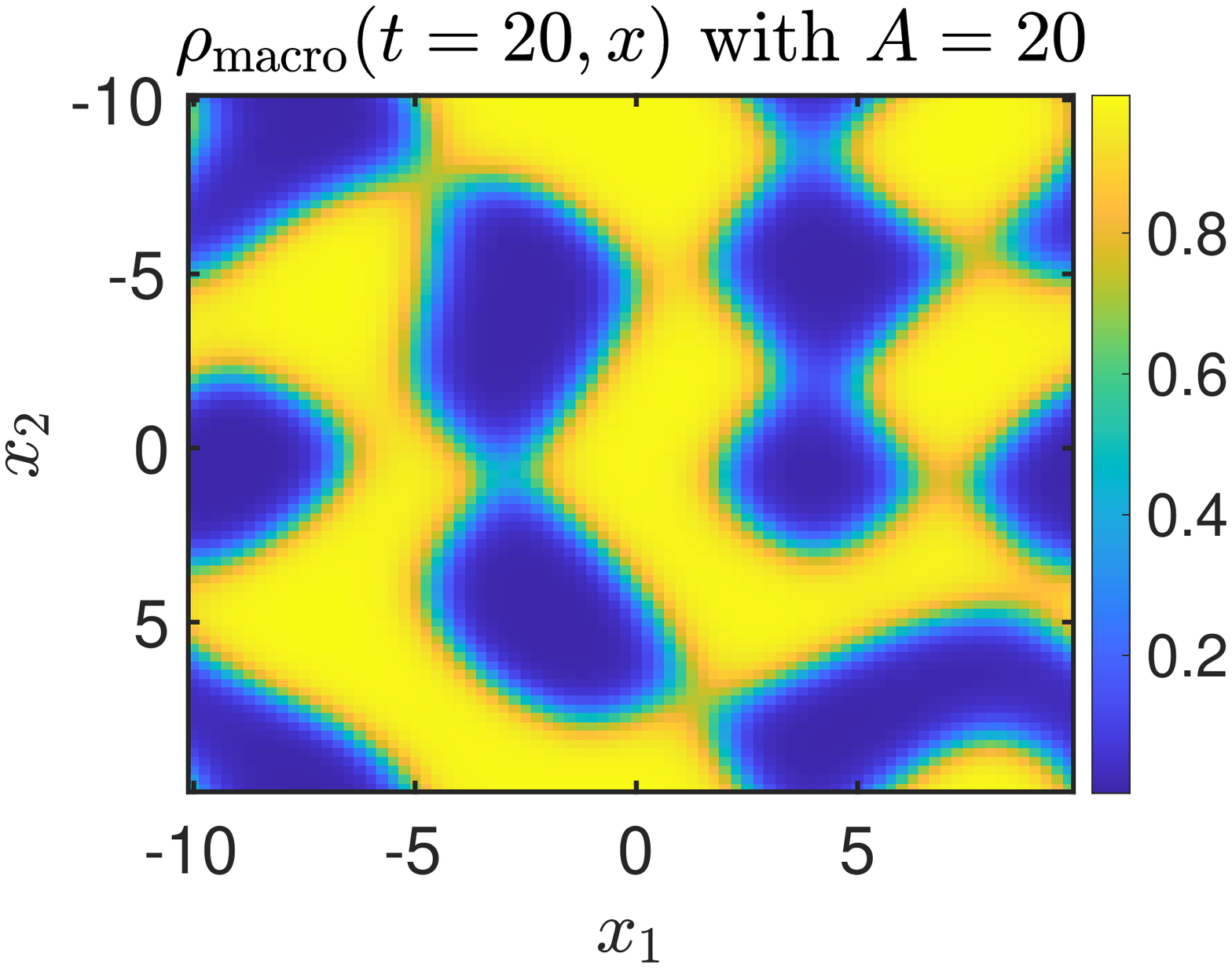}
\includegraphics[scale = 0.2]{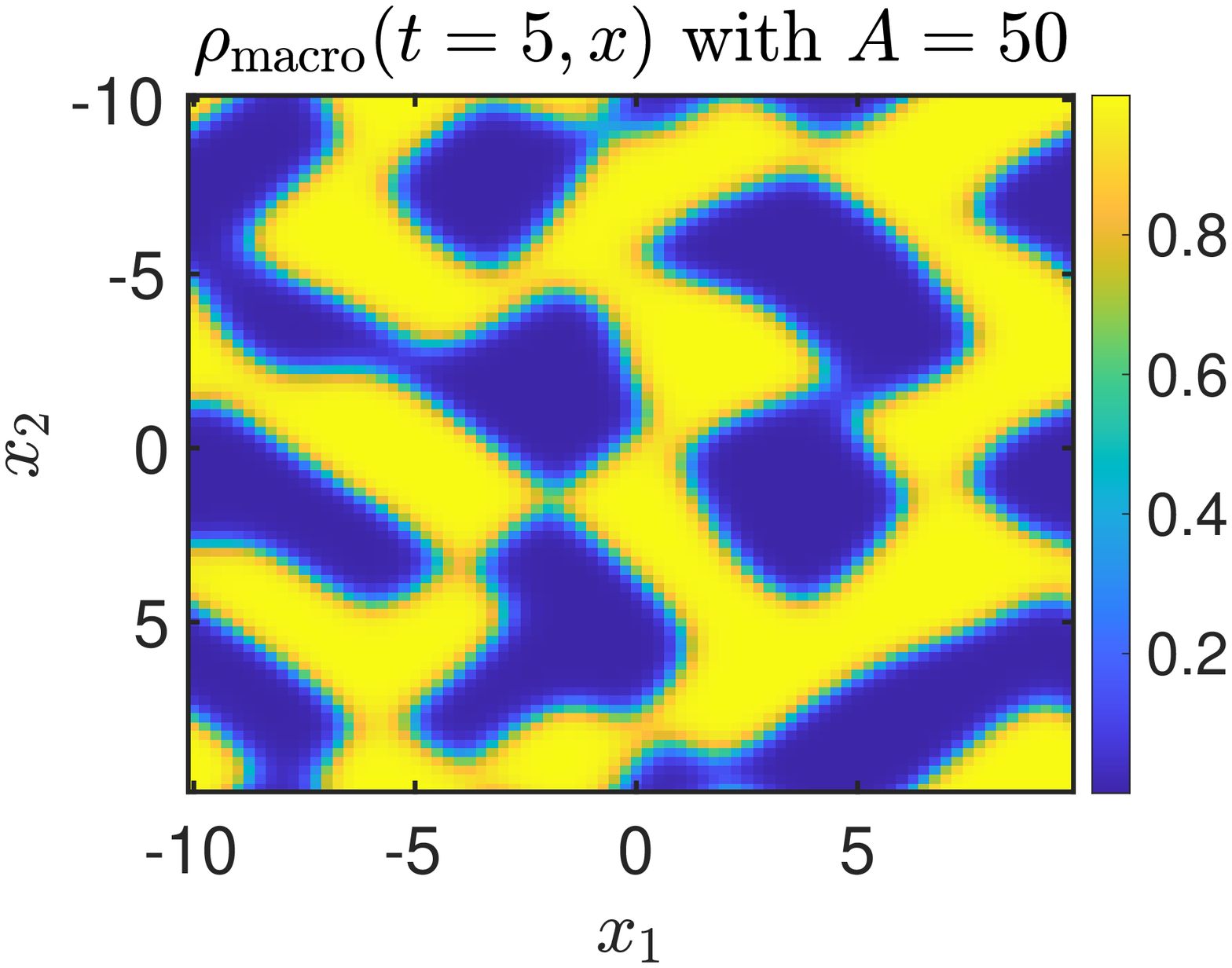}
\includegraphics[scale = 0.2]{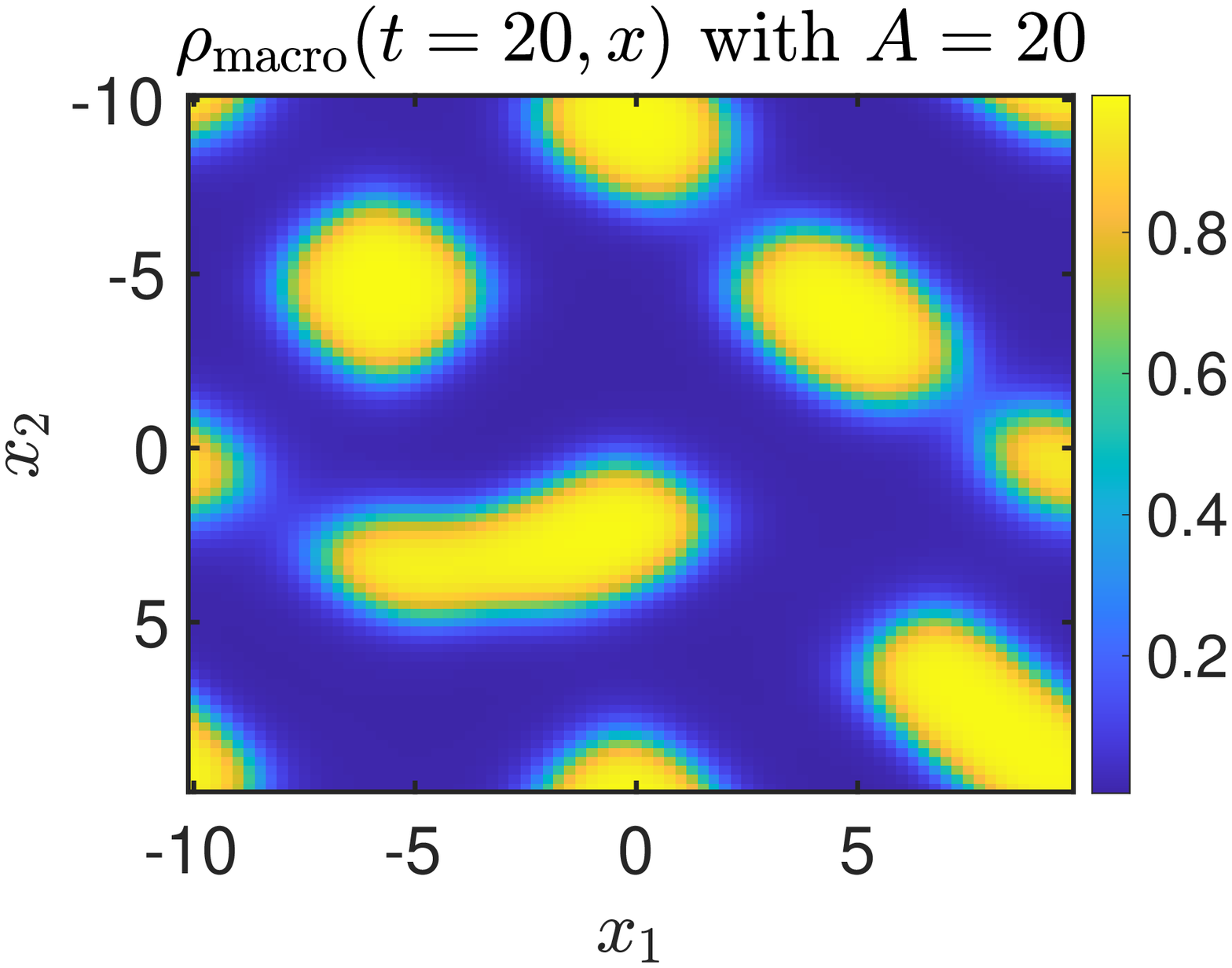}
\includegraphics[scale = 0.2]{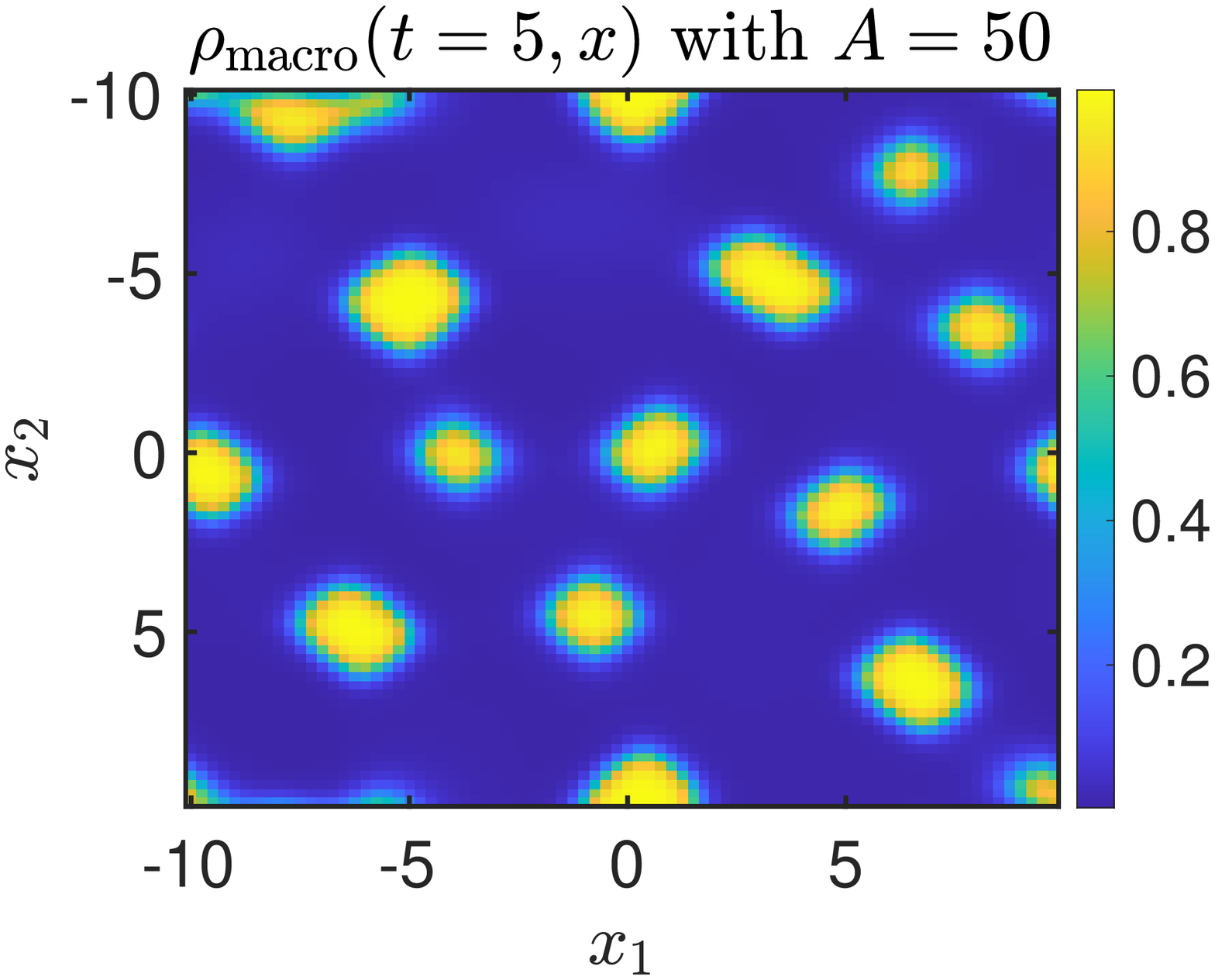}
}

\centerline{
\includegraphics[scale = 0.2]{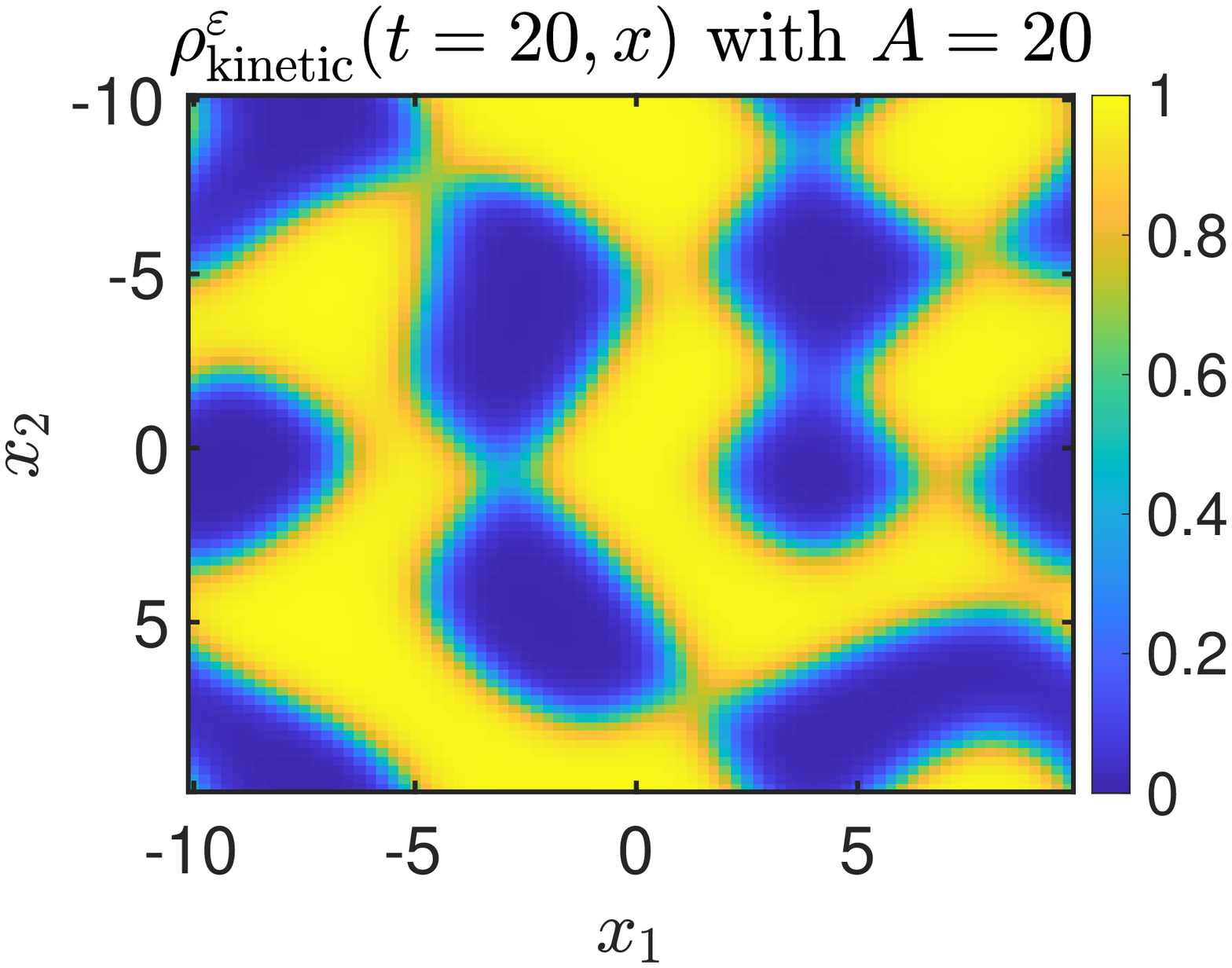}
\includegraphics[scale = 0.2]{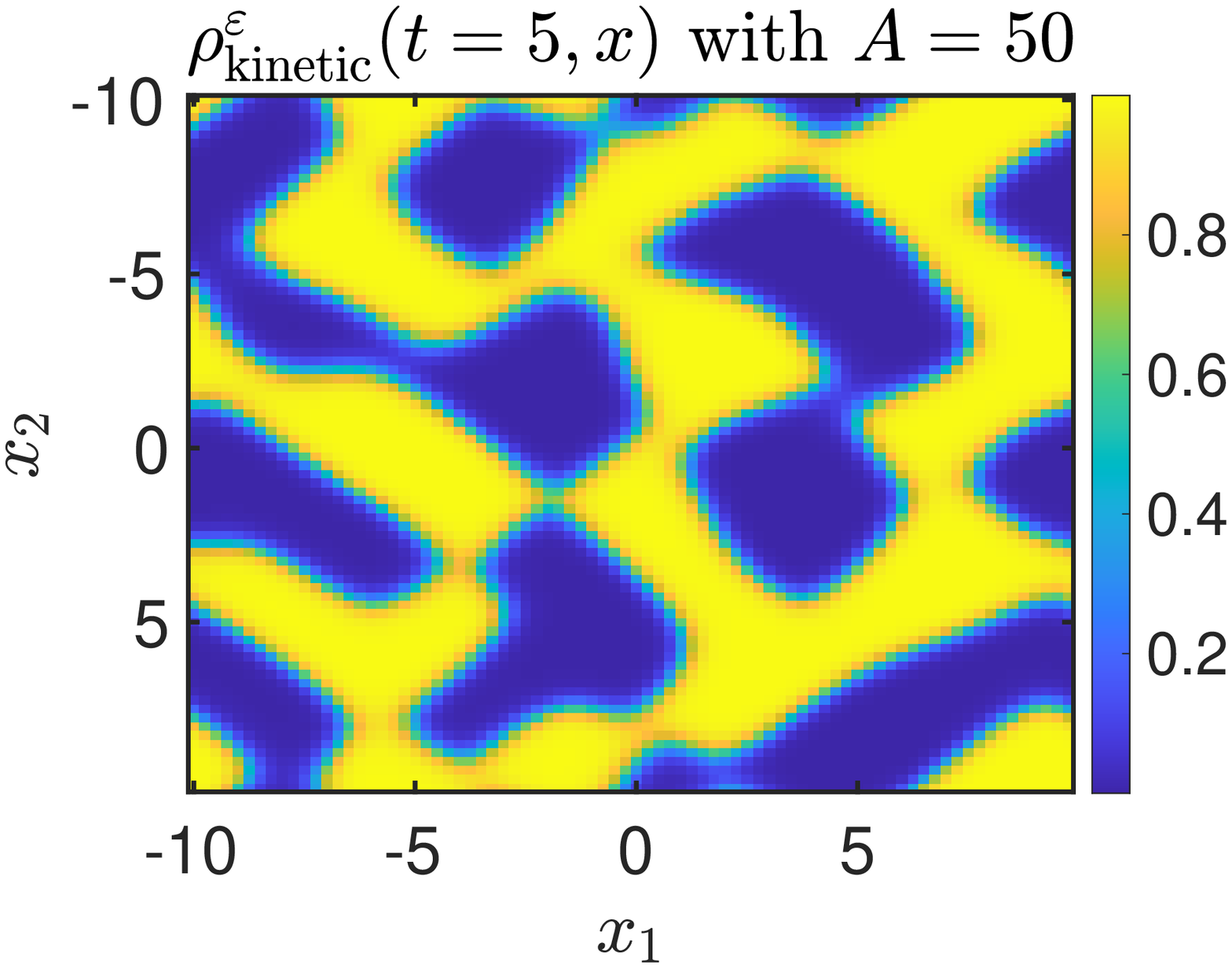}
\includegraphics[scale = 0.2]{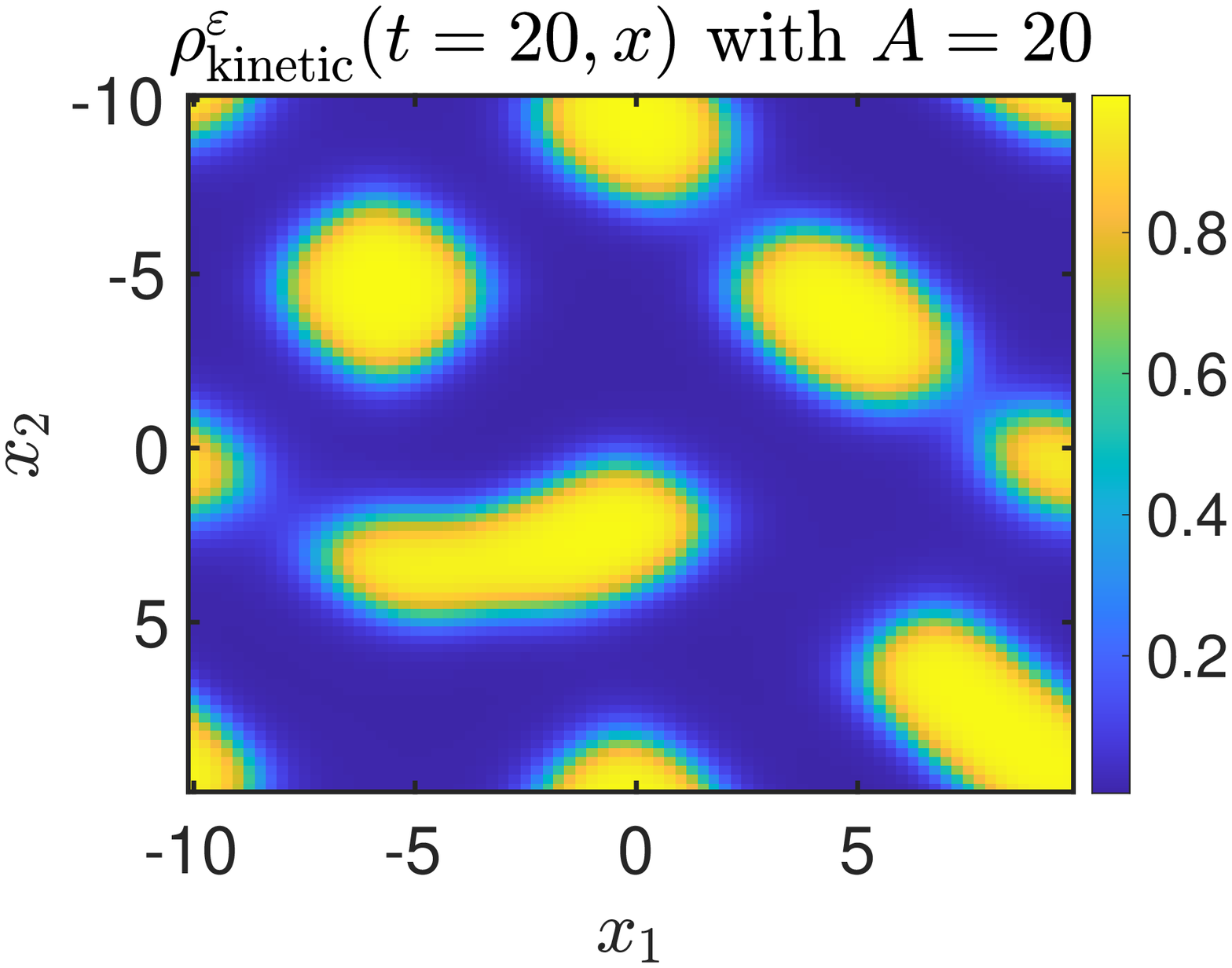}
\includegraphics[scale = 0.2]{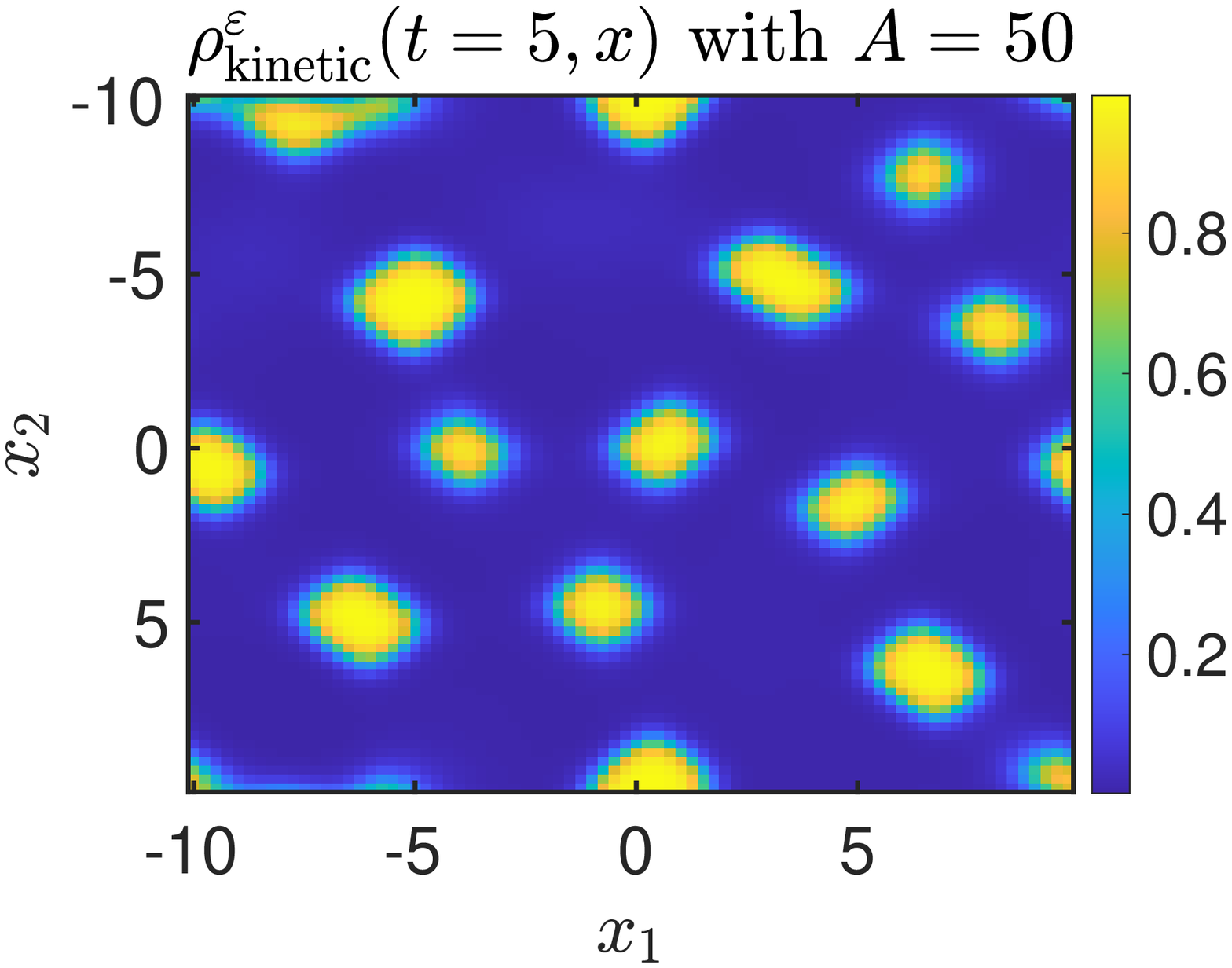}
}
\caption{Comparison between $\rho_\textnormal{macro}(t, \bx)$ (first row) and $\rho_\textnormal{kinetic}^{\varepsilon}(t, \bx)$ with $\varepsilon=10^{-2}$ (second row) for different initial conditions $\rho_0\approx 0.5$ (first two columns) and $\rho_0\approx0.1$ (last two columns) and different values of the chemotactic sensitivity $A=20$, (columns 1 and 3) and $A=50$ (columns 2 and 4). }
\label{fig:2D_compare}
\end{figure}

\begin{figure}[tbhp]
\centerline{
\includegraphics[scale = 0.3]{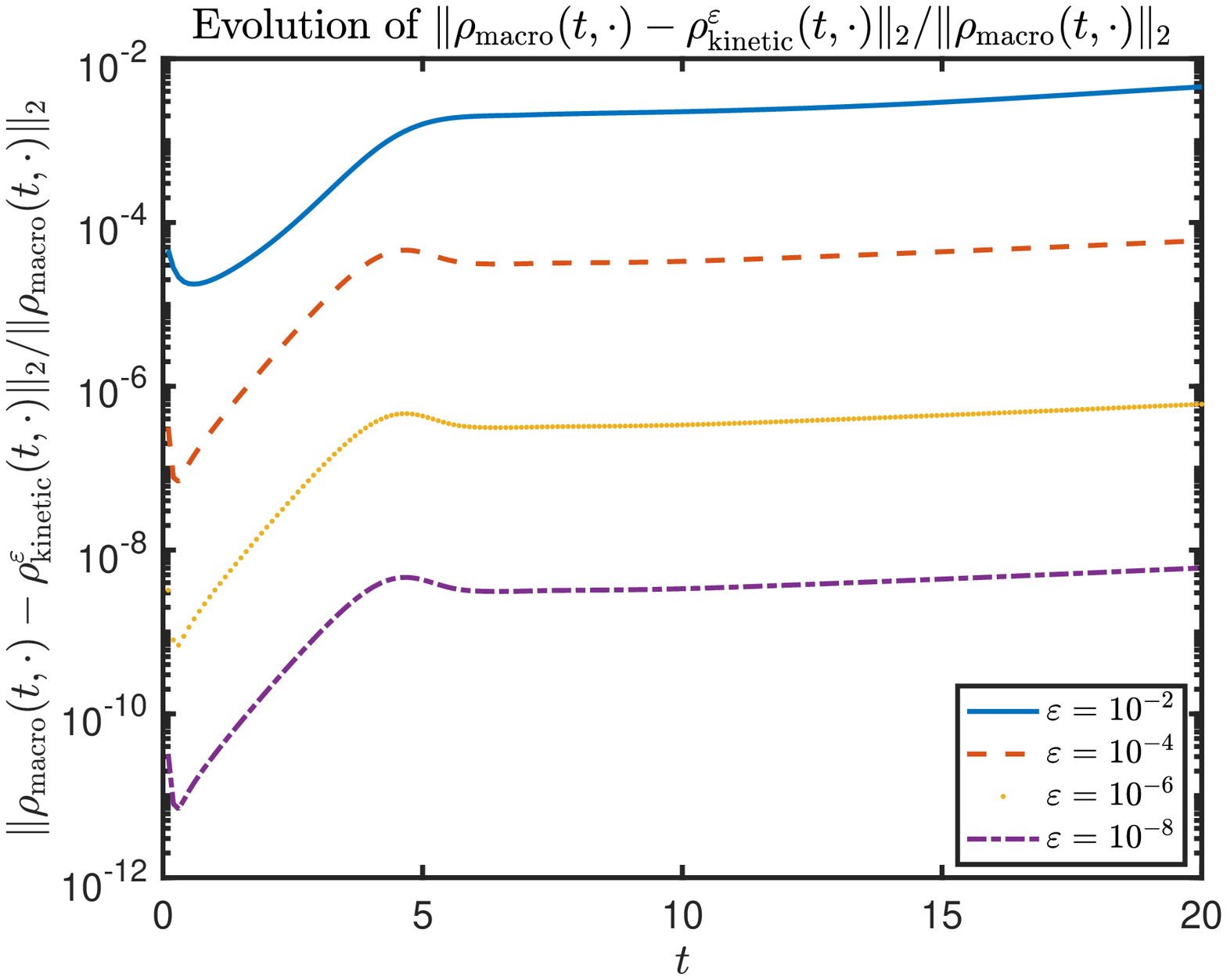}
\includegraphics[scale = 0.3]{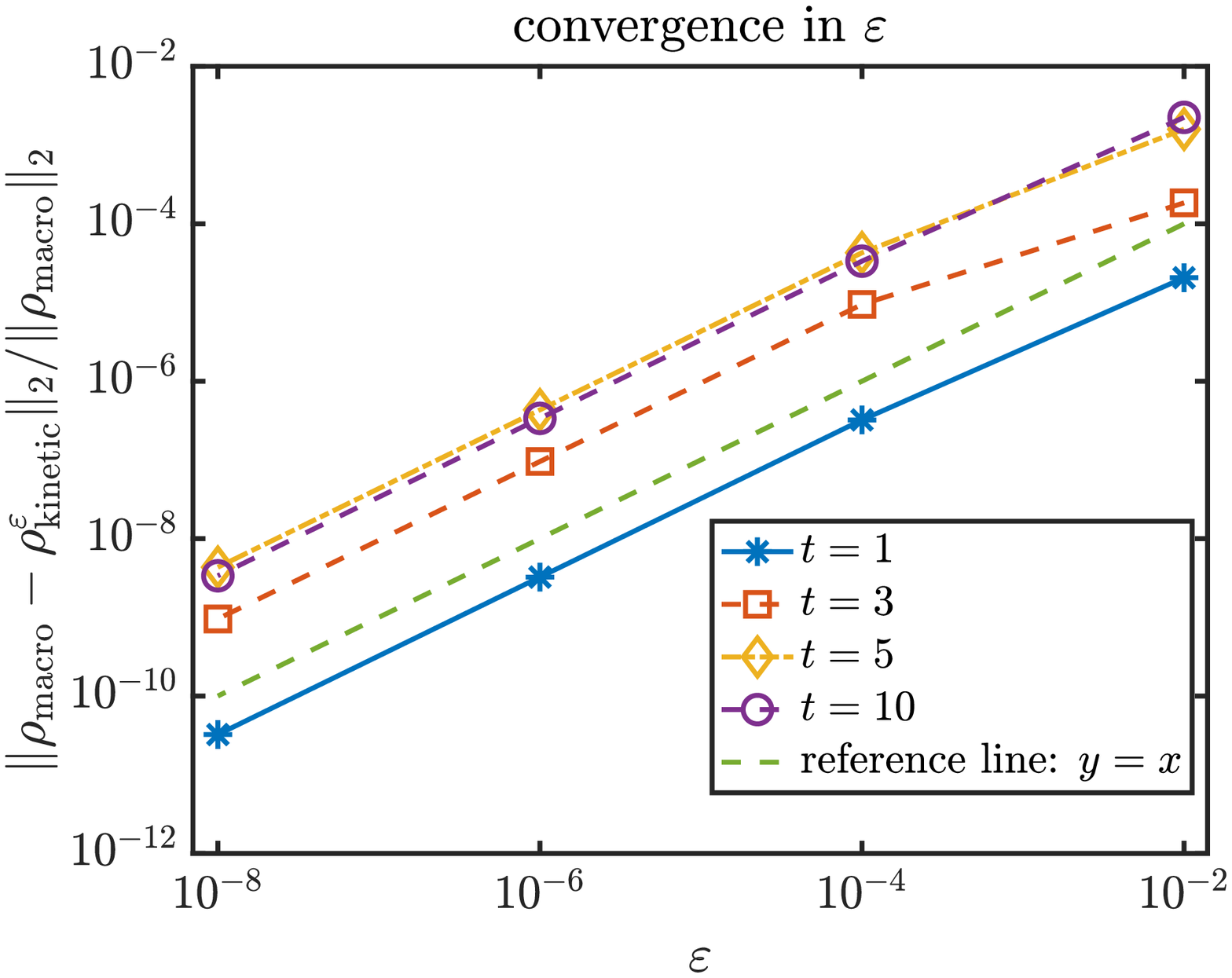}
}
\caption{{Left: Evolution of the relative $L_2$-error. Right: } Convergence of the relative $L_2$-error in $\e$ at $t=1,\ 3,\ 5,\ 10$ with $A=20$ and $\dt=10^{-2}$.}
\label{fig:2D_convergence}
\end{figure}

\section{Conclusions}

{In this paper we have derived a model for chemotaxis incorporating a density dependence in the chemotactic sensitivity function that takes into account the finite size of the cells and volume limitations. We showed, with formal arguments, that the macroscopic chemotactic system can be seen as the diffusion limit of a kinetic 'velocity-jump' model, provided that both the transport term and the turning operator are density dependent. This derivation provides a more direct interpretation of the diffusion tensor and chemotactic sensitivity in terms of more fundamental characteristics of the motion.

We further studied this macroscopic limit numerically using an asymptotic preserving finite difference scheme based on a micro-macro decomposition of the unknown in the sense of  \cite{LemouMieussens}, a projection technique to obtain
a coupled system of two evolution equations for the microscopic and macroscopic
components, and a suitable semi-implicit time discretization. The scheme was successfully extended to account for nonlinear terms by implicit-explicit discretization in an upwind manner, allowing for accurate approximations in the case of strong chemosensitivity. This scheme enabled us to explore numerically the different behaviours observed by the kinetic and macroscopic models in 1D and 2D, and we showed that both models are in good agreement as the diffusion scaling parameter becomes smaller. Moreover, the numerical simulations of the kinetic model
revealed the same pattern sizes as obtained with the macroscopic model and predicted theoretically, with very good precision as $\e$ goes to zero in the kinetic setting. It is noteworthy that both models also feature the same dynamics in time, with a slight delay in the macroscopic simulations compared to the kinetic dynamics. This could be due to the fact that the macroscopic model is obtained in a regime where there are many velocity jumps but small net displacements in one order of time. Therefore, in the macroscopic setting, each particle interacts with many more particles than in the kinetic model, which could result in a delay in the aggregation process. 

From the modelling perspective, it would be natural to extend the derivation to consider different turning kernels, to take into account cell-cell adhesion or nonlocal movement, for instance. The idea to construct the scheme could be generalized to include these cases, but we stress the fact that the detailed discretization is problem-dependent. 
Moreover, the rigorous derivation of volume-filling chemotactic equations from stochastic processes of interacting populations could be considered by adapting ideas from \cite{stevens2000derivation} for instance. 
}

\section*{Acknowledgement}
The authors wish to thank L. Almeida and K. J. Painter for helpful discussions and guidance.
DP was supported by Sorbonne Alliance University with an Emergence project MATHREGEN, grant number S29-05Z101. GER was partially supported by the Fondation Sciences Mathématiques de Paris (FSMP) and the Advanced Grant Nonlocal-CPD (Nonlocal PDEs for Complex Particle Dynamics: Phase Transitions, Patterns and Synchronization) of the European Research Council Executive Agency (ERC). XR was partially supported by the project MoGlimaging, Plan Cancer THE Call, from INSERM, France.

\appendix

\section{Energy dissipation in the macroscopic model} \label{subsec:energy}
{Under proper assumptions, the macroscopic volume-exclusion Keller-Segel model \eqref{eq: macroscopic equation1} can be proven to be energy dissipate, which will be a key feature to be preserved in numerical methods. 
Following a gradient flow approach to energy in the sense of \cite{calvez2006volume,carrillo2001entropy,de2018energy}, we start by defining $H(\rho) = \frac{D_0}{\beta}\ln\left(\frac{\rho}{q(\rho)}\right)$. }
Then the volume-exclusion Keller-Segel model \eqref{eq: macroscopic equation1} can be reformulated as 
\begin{equation}
    \partial_t\rho+\nabla\cdot(\chi(\rho)\rho(-H'(\rho)\nabla\rho+\nabla c))= 0\ , \label{eq: new version of macro}
\end{equation}
where $\chi(\rho)=\beta q(\rho), \, H'(\rho)=D_0\frac{q(\rho)-\rho q'(\rho)}{\beta q(\rho)\rho}$.
The energy functional of the model can be given by 
\begin{equation}\label{def:energy_macro}
    \mathcal{E}(t)=\int\Phi(\rho)\diff x-\frac{1}{2}\int\rho c\diff x\ ,
\end{equation}
where 
\begin{equation}\label{functionPhi}
    \Phi'(\rho)=H(\rho)\ .
\end{equation}

\begin{proposition}\label{prop:energy}
Suppose that $\rho(t,\mathbf{x})$ and $c(t,\mathbf{x})$ solves the macroscopic volume-exclusion Keller-Segel equation \eqref{eq: new version of macro} coupled with the equation
\begin{equation}\label{elliptic_c}
   \Delta c + \rho - c = 0\ , 
\end{equation}
we have
\begin{equation}
    \frac{\diff}{\diff t}\mathcal{E}(t)=-\int \rho\chi(\rho)|\nabla(H-c)|^2\diff x\leq 0\ ,
\end{equation}
where the energy functional $\mathcal{E}(t)$ is given by \eqref{def:energy_macro}.
\end{proposition}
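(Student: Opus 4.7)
\textbf{Proof plan for Proposition \ref{prop:energy}.} The plan is to write the flux in \eqref{eq: new version of macro} as a single gradient so the energy dissipation emerges from a standard $L^2$-type computation, and then to handle the cross term $\int \rho c\, dx$ using the self-adjointness of $-\Delta + I$ applied to the elliptic equation \eqref{elliptic_c}. Throughout I will assume that boundary terms produced by integration by parts vanish (either by decay at infinity or by the periodic boundary conditions used in the numerical part of the paper).

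First I would rewrite the macroscopic equation in the gradient-flow-like form
\begin{equation*}
    \partial_t \rho = \nabla \cdot \bigl(\chi(\rho)\rho\,\nabla(H(\rho) - c)\bigr),
\end{equation*}
which is immediate from $H'(\rho)\nabla\rho = \nabla H(\rho)$ together with the definition of $H$ and $\chi$. Next, differentiate the energy functional in time, using $\Phi'(\rho) = H(\rho)$:
\begin{equation*}
    \frac{d}{dt}\mathcal{E}(t) = \int H(\rho)\,\partial_t\rho \, dx - \frac{1}{2}\int (\partial_t\rho)\, c\, dx - \frac{1}{2}\int \rho\,(\partial_t c)\, dx.
\end{equation*}
Multiplying the gradient form of the $\rho$-equation by $H(\rho) - c$ and integrating by parts gives
\begin{equation*}
    \int \bigl(H(\rho) - c\bigr)\partial_t\rho\,dx = -\int \chi(\rho)\rho\,|\nabla(H(\rho) - c)|^2\,dx,
\end{equation*}
which already produces the target dissipative term.

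The remaining task — the part I expect to be the only subtle point — is to show that $\int c\,\partial_t\rho\,dx = \tfrac{1}{2}\tfrac{d}{dt}\int \rho c\, dx$ so that the computation closes into $\tfrac{d}{dt}\mathcal{E}$. For this I would differentiate \eqref{elliptic_c} in $t$ to obtain $\Delta(\partial_t c) + \partial_t\rho - \partial_t c = 0$, test this identity against $c$, and separately test the original elliptic equation against $\partial_t c$; after integrating by parts both identities yield
\begin{equation*}
    \int c\,\partial_t\rho\, dx = \int \nabla c\cdot\nabla(\partial_t c)\, dx + \int c\,\partial_t c\, dx = \int \rho\,\partial_t c\, dx,
\end{equation*}
which is simply the self-adjointness of $I-\Delta$. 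Averaging the two equal expressions gives $\int c\,\partial_t\rho\, dx = \tfrac{1}{2}\tfrac{d}{dt}\int \rho c\, dx$. Substituting this and the dissipation identity into the time derivative of $\mathcal{E}$ produces
\begin{equation*}
    \frac{d}{dt}\mathcal{E}(t) = -\int \chi(\rho)\rho\,|\nabla(H(\rho) - c)|^2\, dx \le 0,
\end{equation*}
where non-positivity follows from $\chi(\rho) = \beta q(\rho) \ge 0$ and $\rho\ge 0$, completing the argument.
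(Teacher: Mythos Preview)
Your proof is correct and follows essentially the same approach as the paper: multiply the $\rho$-equation by $H-c$ and integrate by parts to get the dissipation, then use the elliptic equation for $c$ (and its time derivative) to reduce $\tfrac{d}{dt}\int\rho c\,dx$ to $2\int c\,\partial_t\rho\,dx$. The only cosmetic difference is that the paper first rewrites $\int\rho c\,dx = \int(c^2+|\nabla c|^2)\,dx$ before differentiating, whereas you invoke the self-adjointness of $I-\Delta$ directly; these are the same computation.
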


\begin{proof}
 Multiplying \eqref{eq: new version of macro} by $H- c$, we get
\begin{align}
    (H-c)\partial_t\rho & =(H-c)\nabla\cdot(\chi(\rho)\rho\nabla(H-c))\nonumber\\ & = \frac{1}{2}\nabla\cdot\left( \chi(\rho)\rho\nabla(H-c)^2\right)-\chi(\rho)\rho|\nabla(H-c)|^2 \ ,\nonumber
\end{align}
where we used the relation $H'(\rho)\nabla\rho-\nabla c=\nabla(H-c)$.
Then
\begin{align*}
    \frac{\diff}{\diff t}\mathcal{E}(t) & = \frac{\diff }{\diff t}\int \left(\Phi-\frac{1}{2}\rho c \right)\diff x
    = \frac{\diff }{\diff t} \left[\int \left(\Phi-\frac{1}{2} c^2 -\frac{1}{2} |\nabla c|^2 \right)\diff x\right]\\
    & = \int\left[ H\partial_t\rho - c \left(\partial_t c - \Delta(\partial_t c)\right) \right] \diff x  = \int (H - c)\partial_t\rho \diff x \\
    &=-\int \rho\chi(\rho)|\nabla(H-c)|^2\diff x\leq 0\ .
\end{align*}
\end{proof}
\begin{remark}\label{rem 6}
Proposition \ref{prop:energy} can be generalized to the equation 
\begin{equation*}
    \partial_t\rho-\nabla\cdot(\chi(\rho)\rho(-H'(\rho)\nabla\rho+\nabla c))=r_0\rho\left(1 - \frac{\rho}{\rho_{\rm max}}\right)_+,
\end{equation*}
where a proliferation term is included satisfying $\rho_{\rm max} + \frac{\rho_{\rm max}}{\bar{\rho}} \le 1$. 
In fact, it can be checked that, when $\rho<\rho_{\rm max}$, we have $\rho < q(\rho)$ and thus $H<0$. 
The conclusion is then obvious. 
\end{remark}

\section{A finite difference scheme for the 2D kinetic model} \label{sec:appendix-FD-2D}
The finite difference scheme \eqref{eq:scheme} can be generalized to multi-dimensional problems where a tensor product grid is applied. 
Here we consider the 2D kinetic model with the special choice $\vec{\psi}_1 = \vec{\phi}(v_1,v_2) \cdot \nabla c$, where $\vec{\phi}(v_1,v_2) = (\phi_1(v_1,v_2), \phi_2(v_1,v_2))^T $. 

Denoting $\rho_{j_1,j_2}^n$, $c_{j_1,j_2}^n$,  $g_{j_1+\frac12,j_2,k_1,k_2}^{(1), n}$ and $g_{j_1,j_2+\frac12,k_1,k_2}^{(2), n}$ to be the numerical approximations of $\rho(t_n,x_{j_1}, x_{j_2})$, $c(t_n,x_{j_1}, x_{j_2})$, $g(t_n, x_{j_1+\frac12}, x_{j_2}, v_{k_1}, v_{k_2})$ and $g(t_n, x_{j_1}, x_{j_2+\frac12}, v_{k_1}, v_{k_2})$, respectively. 
The approximations of $\rho(t, \mathbf{x})$ at half grid points such as $(x_{j_1}, x_{j_2+\frac12})$ can be then easily approximated by the average $\bar{\rho}_{j_1, j_2+\frac12} := (\rho_{j_1, j_2} + \rho_{j_1, j_2+1})/2$. 
It is worth noticing that we used different notations for approximating $g(t_n, x_{j_1+\frac12}, x_{j_2}, *, *)$ and $g(t_n, x_{j_1}, x_{j_2+\frac12}, *, *)$ since different upwind discretizations will be used depending on whether the half grid is in $x_1$-direction or $x_2$-direction. 
An illustration of the grids in $\mathbf{x}$-space for computing $\rho(t,\mathbf{x})$ and $g(t,\mathbf{x},\mathbf{v})$ in 1D and 2D can be found in Figure \ref{fig:grid_illstration}.

\begin{figure}[tbhp]
\centerline{
\includegraphics[scale = 0.4]{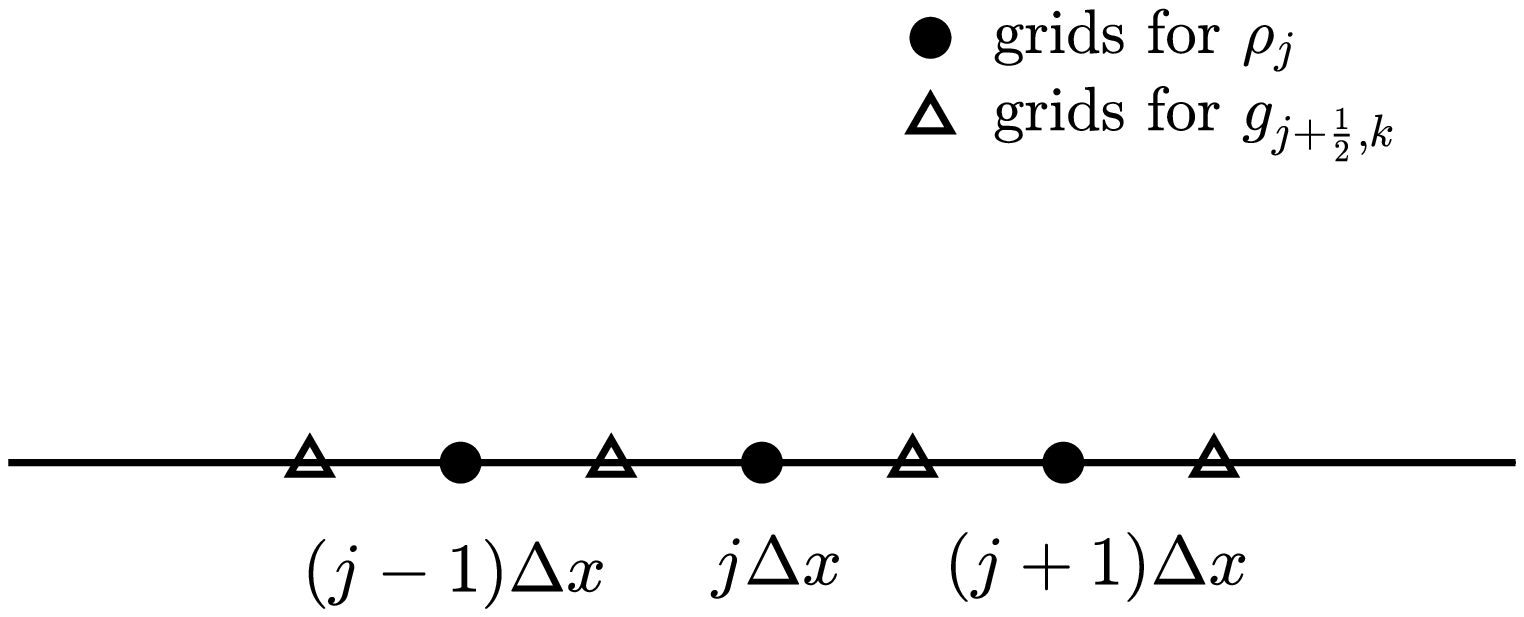}
\includegraphics[scale = 0.4]{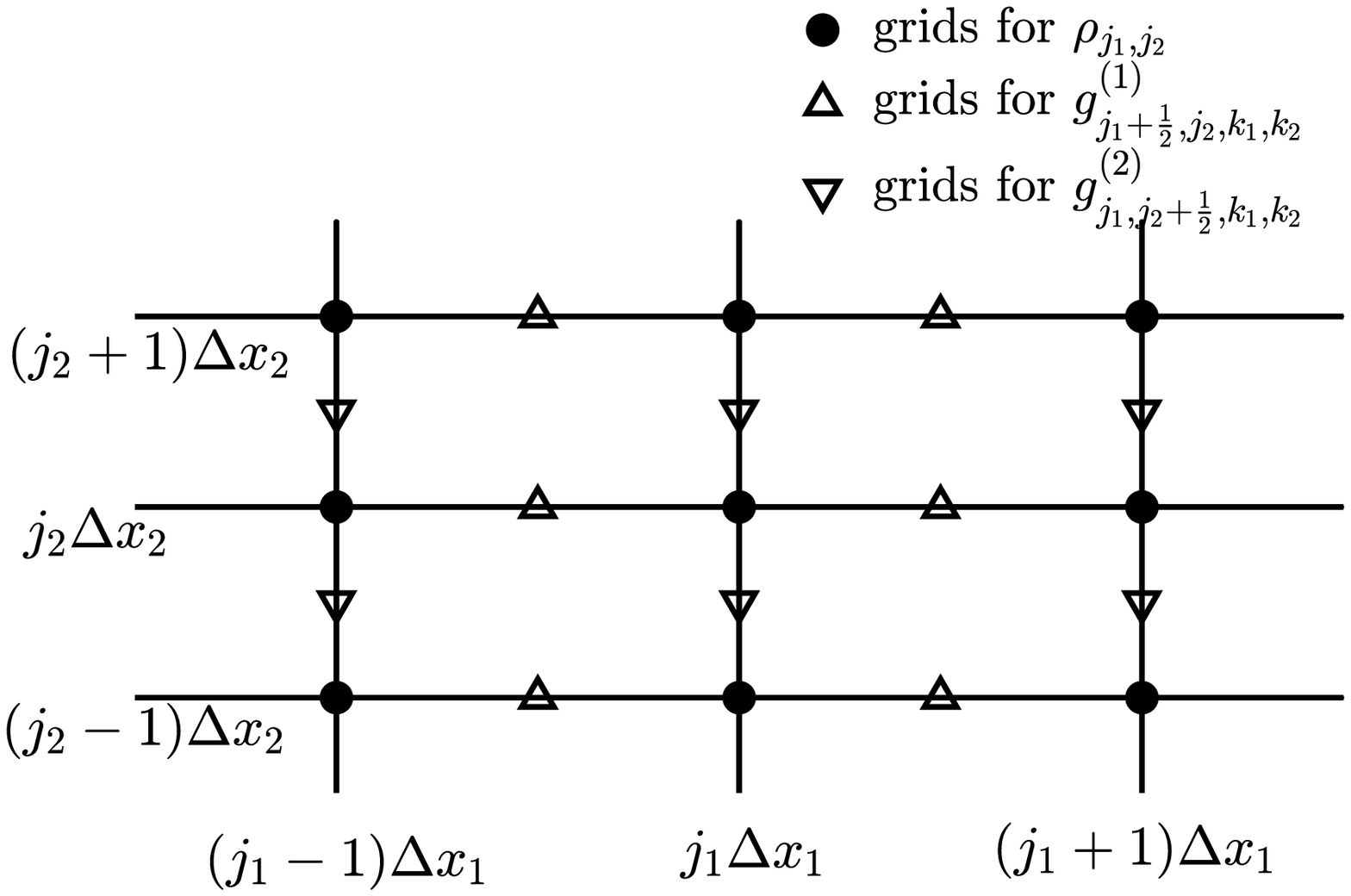}
}
\caption{Illustration of the grids in $\mathbf{x}$-space for computing $\rho(t,\mathbf{x})$ and $g(t,\mathbf{x},\mathbf{v})$ in 1D and 2D.}
\label{fig:grid_illstration}
\end{figure}

With the notations defined, the 2D kinetic model \eqref{eq:kinetic_approx} can be discretized as 
\begin{align}
\label{eq:scheme_2D}
    \begin{cases}
    & \delta_t^+ \rho_{j_1,j_2}^n +  
    \sum_{k_1,k_2} \left[v_{k_1}\delta_{x_1}(q( {\rho}_{*,j_2}^n) g_{*,j_2,k_1,k_2}^{(1), n+1})_{j_1} +  v_{k_2}\delta_{x_2}(q( {\rho}_{j_1,*}^n) g_{j_1,*,k_1,k_2}^{(2), n+1})_{j_2}\right]\Delta v_1 \Delta v_2\\
     & \quad + D_h \left[ \delta_{x_1}( {\rho}_{*,j_2}^n q'( {\rho}_{*,j_2}^n)\delta_{x_1} \rho_{*,j_2}^{n+1})_{j_1} + \delta_{x_2}( {\rho}_{j_1,*}^n q'( {\rho}_{j_1,*}^n)\delta_{x_2} \rho_{j_1,*}^{n+1})_{j_2} \right]
    = r_0 \rho_{j_1,j_2}^{n}\left(1 - \frac{\rho_{j_1,j_2}^n}{\rho_{\rm max}}\right)_+,\\
    &\delta_t^+ g_{j_1+\frac12,j_2,k_1, k_2}^{(1),n} + \frac{1}{\varepsilon}(I-\Pi_h) K_{j_1+\frac12,j_2,k_1, k_2}^{(1),n} 
    = \frac{1}{\varepsilon^2}S_{j_1+\frac12,j_2,k_1,k_2}^{(1), n, n+1} + r_0 g_{j_1+\frac12,j_2,k_1,k_2}^{(1),n}\left(1 - \frac{\rho_{j_1+\frac12,j_2}^n}{\rho_{\rm max}}\right)_+,\\
        &\delta_t^+ g_{j_1,j_2+\frac12,k_1, k_2}^{(2),n} + \frac{1}{\varepsilon}(I-\Pi_h) K_{j_1,j_2+\frac12,k_1, k_2}^{(2),n} 
    = \frac{1}{\varepsilon^2}S_{j_1,j_2+\frac12,k_1,k_2}^{(2), n, n+1} + r_0 g_{j_1,j_2+\frac12,k_1,k_2}^{(2),n}\left(1 - \frac{\rho_{j_1,j_2+\frac12}^n}{\rho_{\rm max}}\right)_+,\\
    & (\delta_{x_1}^2 + \delta_{x_2}^2) c_{j_1,j_2}^{n+1} + \rho_{j_1,j_2}^{n+1} - c_{j_1,j_2}^{n+1} = 0,
\end{cases}
\end{align}
where  $\Pi_h$ is the discrete projection operator defined as 
\begin{equation*}
    \Pi_h \eta_{j_1,j_2,k_1,k_2}^n = \sum_{k_1,k_2}\eta_{j_1,j_2,k_1,k_2}^n \psi_{0}(v_{k_1},v_{k_2})\Delta v_1 \Delta v_2
\end{equation*} 
for some general function $\eta(t,\mathbf{x},\mathbf{v})$ and 
\begin{align*}
    & K_{j_1+\frac12,j_2,k_1, k_2}^{(1),n}  = v_{k_1}^+\delta_{x_1}(q( {\rho}_{*,j_2}^n) g_{*,j_2,k_1,k_2}^{(1),n})_{j_1} - v_{k_1}^-\delta_{x_1}(q( {\rho}_{*,j_2}^n) g_{*,j_2,k_1,k_2}^{(1),n})_{j_1+1}\\
    & \qquad + v_{k_2}^+\delta_{x_2}(q( {\rho}_{j_1+\frac12,*}^n) g_{j_1+\frac12, *,k_1,k_2}^{(1),n})_{j_2-\frac12} - v_{k_2}^-\delta_{x_2}(q( {\rho}_{j_1+\frac12,*}^n) g_{j_1+\frac12,*,k_1,k_2}^{(1),n})_{j_2+\frac12} \\
    &\qquad + \psi_{0}(v_{k_1}, v_{k_2})\delta_{x_1}\left\{ {\rho}_{*,j_2}^n q'( {\rho}_{*,j_2}^n)\left[v_{k_1}^2\delta_{x_1}\rho_{*,j_2}^n + v_{k_1}v_{k_2}(\delta_{x_2}\rho_{*,**}^n)_{j_2} \right]\right\}_{j_1+\frac12}\ , \\
    &\qquad + \psi_{0}(v_{k_1}, v_{k_2})\delta_{x_2}\left\{ {\rho}_{j_1+\frac12,*}^n q'( {\rho}_{j_1+\frac12,*}^n)\left[v_{k_1}v_{k_2}(\delta_{x_1}\rho_{**,*}^n)_{j_1+\frac12} + v_{k_2}^2 \delta_{x_2}\rho_{j_1+\frac12,*}^n\right]\right\}_{j_2}\ , \\
    & K_{j_1,j_2+\frac12,k_1, k_2}^{(2),n}  = v_{k_1}^+\delta_{x_1}(q( {\rho}_{*,j_2+\frac12}^n) g_{*,j_2+\frac12,k_1,k_2}^{(1),n})_{j_1-\frac12} - v_{k_1}^-\delta_{x_1}(q( {\rho}_{*,j_2+\frac12}^n) g_{*,j_2+\frac12,k_1,k_2}^{(1),n})_{j_1+\frac12}\\
    & \qquad + v_{k_2}^+\delta_{x_2}(q( {\rho}_{j_1,*}^n) g_{j_1, *,k_1,k_2}^{(1),n})_{j_2} - v_{k_2}^-\delta_{x_2}(q( {\rho}_{j_1,*}^n) g_{j_1,*,k_1,k_2}^{(1),n})_{j_2+1} \\
    &\qquad + \psi_{0}(v_{k_1}, v_{k_2})\delta_{x_1}\left\{ {\rho}_{*,j_2+\frac12}^n q'( {\rho}_{*,j_2+\frac12}^n)\left[v_{k_1}^2\delta_{x_1}\rho_{*,j_2+\frac12}^n + v_{k_1}v_{k_2}(\delta_{x_2}\rho_{*,**}^n)_{j_2+\frac12} \right]\right\}_{j_1}\ , \\
    &\qquad + \psi_{0}(v_{k_1}, v_{k_2})\delta_{x_2}\left\{ {\rho}_{j_1,*}^n q'( {\rho}_{j_1,*}^n)\left[v_{k_1}v_{k_2}(\delta_{x_1}\rho_{**,*}^n)_{j_1} + v_{k_2}^2 \delta_{x_2}\rho_{j_1,*}^n \right]\right\}_{j_2+\frac12}\ , \\
    & S_{j_1+\frac12, j_2, k_1, k_2}^{(1), n, n+1} = -\psi_{0}(v_{k_1}, v_{k_2})q( {\rho}_{j_1+\frac12,j_2}^n)  \left[v_{k_1} \delta_{x_1} \rho_{j_1+\frac12, j_2}^{n+1} + v_{k_2} \delta_{x_2} \rho_{j_1+\frac12, j_2}^{n+1} \right] \\
    & \qquad  + \phi_{1}(v_{k_1}, v_{k_2})\delta_{x_1} c_{j_1+\frac12,j_2}^n \Phi^{(1),n+1,n}_{j_1+\frac12,j_2}+\phi_{2}(v_{k_1}, v_{k_2}) \delta_{x_2} c_{j_1+\frac12,j_2}^n
    \Phi^{(2),n+1,n}_{j_1+\frac12,j_2}
    -q( {\rho}_{j_1+\frac12,j_2}^n) g_{j_1+\frac12,j_2,k_1,k_2}^{(1), n+1}\ ,\\
    & S_{j_1, j_2+\frac12, k_1, k_2}^{(2), n, n+1} = -\psi_{0}(v_{k_1}, v_{k_2})q( {\rho}_{j_1,j_2+\frac12}^n)  \left[v_{k_1} \delta_{x_1} \rho_{j_1, j_2+\frac12}^{n+1} + v_{k_2} \delta_{x_2} \rho_{j_1, j_2+\frac12}^{n+1} \right] \\
    & \qquad  + \phi_{1}(v_{k_1}, v_{k_2})\delta_{x_1} c_{j_1,j_2+\frac12}^n\Phi^{(1),n+1,n}_{j_1,j_2+\frac12} +\phi_{2}(v_{k_1}, v_{k_2}) \delta_{x_2} c_{j_1,j_2+\frac12}^n 
    \Phi^{(2),n+1,n}_{j_1,j_2+\frac12}
    -q( {\rho}_{j_1,j_2+\frac12}^n) g_{j_1,j_2+\frac12,k_1,k_2}^{(2), n+1}\ ,
\end{align*}
where, as in the 1D case, both $\Phi^{(1), n_1,n}$ and $\Phi^{(1), n_1,n}$ are upwind approximations of $q(\rho)\rho$ at $t=t_n$ and defined as 
\begin{align*}
    & \Phi^{(1), n_1,n}_{j_1, j_2} = 
	\begin{cases}
		\rho_{j_1 - \frac12,j_2}^{n_1} q(\rho_{j_1+\frac12,j_2}^n)\ , \quad & \text{ if } \delta_{x_1} c_{j_1,j_2}^n \ge 0\ , \\
		\rho_{j_1+\frac12,j_2}^{n_1} q(\rho_{j_1-\frac12,j_2}^n)\ , \quad & \text{ if } \delta_{x_1} c_{j_1,j_2}^n <0\ , 
	\end{cases}\\
	& \Phi^{(2), n_1,n}_{j_1, j_2} = 
	\begin{cases}
		\rho_{j_1,j_2 - \frac12}^{n_1} q(\rho_{j_1,j_2+\frac12}^n)\ , \quad & \text{ if } \delta_{x_2} c_{j_1,j_2}^n \ge 0\ , \\
		\rho_{j_1,j_2+\frac12}^{n_1} q(\rho_{j_1,j_2-\frac12}^n)\ , \quad & \text{ if } \delta_{x_2} c_{j_1,j_2}^n <0\ ,
	\end{cases}
\end{align*}

As for the 1D case, we can formally prove the asymptotic preserving property of the 2D scheme \eqref{eq:scheme_2D} in a similar way. 
In fact, when $\varepsilon\to 0$, we expect that  
\begin{equation*}
	S_{j_1+\frac12, j_2, k_1, k_2}^{(1), n, n+1} = 0\ , \qquad S_{j_1, j_2+\frac12, k_1, k_2}^{(2), n, n+1} = 0,
\end{equation*}
from where we get
\begin{align*}
  & q( {\rho}_{j_1+\frac12,j_2}^n) g_{j_1+\frac12,j_2,k_1,k_2}^{(1), n+1} = -\psi_{0}(v_{k_1}, v_{k_2})q( {\rho}_{j_1+\frac12,j_2}^n)  \left[v_{k_1} \delta_{x_1} \rho_{j_1+\frac12, j_2}^{n+1} + v_{k_2} \delta_{x_2} \rho_{j_1+\frac12, j_2}^{n+1} \right] \nonumber \\
        & \qquad  + \phi_{1}(v_{k_1}, v_{k_2})\delta_{x_1} c_{j_1+\frac12,j_2}^n \Phi^{(1),n+1,n}_{j_1+\frac12,j_2}+\phi_{2}(v_{k_1}, v_{k_2}) \delta_{x_2} c_{j_1+\frac12,j_2}^n
        \Phi^{(2),n+1,n}_{j_1+\frac12,j_2},\\
	&q( {\rho}_{j_1,j_2+\frac12}^n) g_{j_1,j_2+\frac12,k_1,k_2}^{(2), n+1} = -\psi_{0}(v_{k_1}, v_{k_2})q( {\rho}_{j_1,j_2+\frac12}^n)  \left[v_{k_1} \delta_{x_1} \rho_{j_1, j_2+\frac12}^{n+1} + v_{k_2} \delta_{x_2} \rho_{j_1, j_2+\frac12}^{n+1} \right] \nonumber\\
    & \qquad  + \phi_{1}(v_{k_1}, v_{k_2})\delta_{x_1} c_{j_1,j_2+\frac12}^n\Phi^{(1),n+1,n}_{j_1,j_2+\frac12} +\phi_{2}(v_{k_1}, v_{k_2}) \delta_{x_2} c_{j_1,j_2+\frac12}^n 
    \Phi^{(2),n+1,n}_{j_1,j_2+\frac12}\ .
\end{align*}
Substituting into the first equation in \eqref{eq:scheme_2D} and using the fact that
\begin{align*}
	&\sum_{k_1,k_2} v_{k_i}v_{k_j} \psi_0(v_{k_1},v_{k_2}) \Delta v_1 \Delta v_2 = D_h \delta_{i,j},  
	&\sum_{k_1,k_2} v_{k_i} \phi_j(v_{k_1},v_{k_2}) \Delta v_1 \Delta v_2 = A\delta_{i,j},  \quad i,j = 1,2,                   
\end{align*}
we recover the finite difference scheme for the macro model 
\begin{align*}
	&\delta_t^+ \rho_{j_1,j_2}^n   
    	- D_h \delta_{x_1} \left[ (q(\rho_{*,j_2}^n) - {\rho}_{*,j_2}^n q'( {\rho}_{*,j_2}^n)) \delta_{x_1} \rho_{*,j_2}^{n+1}\right]_{j_1} 
	- D_h \delta_{x_2} \left[ (q(\rho_{j_1,*}^n) -   {\rho}_{j_1,*}^n q'( {\rho}_{j_1,*}^n))\delta_{x_2} \rho_{j_1, *}^{n+1} \right]_{j_2} 	\nonumber \\
	 & \quad + A\delta_{x_1}\left[\Phi_{*,j_2}^{(1),n+1,n}\delta_{x_1} c_{*,j_2}^n\right]_{j_1} 
	 + A\delta_{x_2}\left[\Phi_{j_1,*}^{(1),n+1,n}\delta_{x_2} c_{j_1,*}^n\right]_{j_2}
    = r_0 \rho_{j_1,j_2}^{n}\left(1 - \frac{\rho_{j_1,j_2}^n}{\rho_{\rm max}}\right)_+\ .
\end{align*}

\bibliographystyle{abbrv}
\bibliography{AP_method}

\end{document}